\documentclass[11pt]{article}
\usepackage{latexsym,amsfonts,amssymb,amsmath,amsthm}
\usepackage{graphicx}

\usepackage[usenames,dvipsnames]{color}
\usepackage{ulem}

\parindent 0.5cm
\evensidemargin 0cm \oddsidemargin 0cm \topmargin 0cm \textheight 20cm \textwidth 16cm \footskip 2cm \headsep
0cm

\begin{document}
\setlength{\baselineskip}{16pt}

\parindent 0.5cm
\evensidemargin 0cm \oddsidemargin 0cm \topmargin 0cm \textheight
22cm \textwidth 15cm \footskip 2cm \headsep 0cm

\newtheorem{theorem}{Theorem}[section]
\newtheorem{lemma}{Lemma}[section]
\newtheorem{proposition}{Proposition}[section]
\newtheorem{definition}{Definition}[section]
\newtheorem{example}{Example}[section]
\newtheorem{corollary}{Corollary}[section]

\newtheorem{remark}{Remark}[section]

\numberwithin{equation}{section}

\def\p{\partial}
\def\I{\textit}
\def\R{\mathbb R}
\def\C{\mathbb C}
\def\u{\underline}
\def\l{\lambda}
\def\a{\alpha}
\def\O{\Omega}
\def\e{\epsilon}
\def\ls{\lambda^*}
\def\D{\displaystyle}
\def\wyx{ \frac{w(y,t)}{w(x,t)}}
\def\imp{\Rightarrow}
\def\tE{\tilde E}
\def\tX{\tilde X}
\def\tH{\tilde H}
\def\tu{\tilde u}
\def\d{\mathcal D}
\def\aa{\mathcal A}
\def\DH{\mathcal D(\tH)}
\def\bE{\bar E}
\def\bH{\bar H}
\def\M{\mathcal M}
\renewcommand{\labelenumi}{(\arabic{enumi})}

\def\disp{\displaystyle}
\def\undertex#1{$\underline{\hbox{#1}}$}
\def\card{\mathop{\hbox{card}}}
\def\sgn{\mathop{\hbox{sgn}}}
\def\exp{\mathop{\hbox{exp}}}
\def\OFP{(\Omega,{\cal F},\PP)}
\newcommand\JM{Mierczy\'nski}
\newcommand\RR{\ensuremath{\mathbb{R}}}
\newcommand\CC{\ensuremath{\mathbb{C}}}
\newcommand\QQ{\ensuremath{\mathbb{Q}}}
\newcommand\ZZ{\ensuremath{\mathbb{Z}}}
\newcommand\NN{\ensuremath{\mathbb{N}}}
\newcommand\PP{\ensuremath{\mathbb{P}}}
\newcommand\abs[1]{\ensuremath{\lvert#1\rvert}}

\newcommand\normf[1]{\ensuremath{\lVert#1\rVert_{f}}}
\newcommand\normfRb[1]{\ensuremath{\lVert#1\rVert_{f,R_b}}}
\newcommand\normfRbone[1]{\ensuremath{\lVert#1\rVert_{f, R_{b_1}}}}
\newcommand\normfRbtwo[1]{\ensuremath{\lVert#1\rVert_{f,R_{b_2}}}}
\newcommand\normtwo[1]{\ensuremath{\lVert#1\rVert_{2}}}
\newcommand\norminfty[1]{\ensuremath{\lVert#1\rVert_{\infty}}}

\title{Nonlocal dispersal equations with almost periodic dependence. I. Principal spectral theory}

\author{Maria Amarakristi Onyido\,\, and\,\,
 Wenxian Shen\\
Department of Mathematics and Statistics\\
Auburn University\\
Auburn University, AL 36849 }

\date{}
\maketitle

\noindent {\bf Abstract.}
This series of two papers is devoted to the study of the principal spectral theory of
nonlocal dispersal operators with almost periodic dependence and the study of the asymptotic
dynamics of nonlinear nonlocal dispersal equations with almost periodic dependence. In this first
part of the series, we investigate the principal spectral theory of nonlocal dispersal operators
from two aspects: top Lyapunov exponents and generalized principal eigenvalues. Among others,
we provide various characterizations of the top Lyapunov exponents and generalized principal
eigenvalues, establish the relations between them, and study the effect of time and space
 variations on them. In the second part of the series, we will study the asymptotic dynamics of
nonlinear nonlocal dispersal equations with almost periodic dependence applying the principal spectral theory to be developed in
this part.

\bigskip

\noindent {\bf Key words.} Nonlocal dispersal, generalized eigenvalue, top Lyapunov exponent, almost periodicity.

\bigskip

\noindent {\bf 2010 Mathematics subject classification.} 45C05, 47A10, 47B65, 45G20, 92D25

\newpage

\section{Introduction}
\setcounter{equation}{0}

 This  series of two papers is devoted to the study of the principal spectral theory of the following linear nonlocal dispersal equation,
\begin{equation}
\label{main-linear-eq}
\p_t u=\int_D \kappa(y-x)u(t,y)dy+a(t,x)u,\quad x\in \bar D,
\end{equation}
and to the study of the asymptotic dynamics of the following nonlinear nonlocal dispersal equation,
\begin{equation}
\label{main-nonlinear-eq}
\p_t u=\int_D \kappa(y-x)u(t,y)dy+u f(t,x,u),\quad x\in\bar D,
\end{equation}
where $D\subset \RR^N$ is a bounded domain or $D=\RR^N$, and $\kappa(\cdot)$, $a(\cdot,\cdot)$ and $f(\cdot,\cdot,\cdot)$ satisfy

 \medskip

 \noindent {\bf (H1)} $\kappa(\cdot)\in C^1(\RR^{N},[0,\infty))$, $\kappa(0)>0$,
  $\int_{\RR^N}\kappa(x)dx=1$, and there are $\mu,M>0$ such that $\kappa(x)\le e^{-\mu|x|}$ and $|\nabla \kappa|\le e^{-\mu |x|}$
  for $|x|\ge M$.

 \medskip

 \noindent {\bf (H2)}  $a(t,x)$ is uniformly continuous in $(t,x)\in\RR\times \bar D$, and is almost periodic in $t$
 uniformly with respect to $x\in\bar D$ (see Definition \ref{almost-periodic-def} for the definition of almost periodic functions).

 \medskip

 \noindent {\bf (H3)}  $f(t,x,u)$ is $C^1$ in $u$; $f(t,x,u)$ and $f_u(t,x,u)$
are uniformly continuous in $(t,x,u)\in \RR\times \bar D\times E$ for any bounded set $E\subset \RR$; $f(t,x,u)$ is almost periodic in $t$ uniformly with respect to $x\in\bar D$ and $u$ in bounded sets of $\RR$; $f(t,x,u)<0$ for $u\gg 1$ and any $(t,x)\in\RR\times\bar D$;
$f_u(t,x,u)<0$ for $(t,x,u)\in \RR\times \bar D\times [0,\infty)$.

\medskip

We will  establish the principal spectral theory of \eqref{main-linear-eq} in this  part
 and will study the asymptotic dynamics
of \eqref{main-nonlinear-eq} in the second part.

Recently there has been extensive investigation on the dynamics of populations having a long range dispersal strategy (see \cite{BaLi0, BaZh, BaLi,  BeCoVo1, Co1, Co3, CoDaMa, DeShZh, GaRo1, KaLoSh, LiSuWa, LiWaZh, LiZh, RaShZh,  ShZh1, ShZh2, ShVo, ZhZh1, ZhZh2}, etc.).  The following nonlocal reaction diffusion equations are commonly used models to integrate the  long range dispersal
 for such populations (see \cite{Fif, GrHiHuMiVi,  HuMaMiVi, LuPaLe, Tur}, etc):
\begin{equation}
\label{dirichlet-kpp-eq} \p_t u=\int_\Omega
\kappa(y-x)u(t,y)dy-u(t,x)+ug(t,x,u),\quad x\in\bar \Omega,
\end{equation}
\begin{equation}
\label{neumann-kpp-eq}
 \p_t u=\int_\Omega \kappa(y-x)(u(t,y)-u(t,x))dy+ug(t,x,u),\quad x\in\bar \Omega,
\end{equation}
where $\Omega\subset\RR^N$ is a bounded domain,
and
\begin{equation}
\label{periodic-kpp-eq}
 \p_t u=\int_{\RR^N}
\kappa(y-x)u(t,y)dy-u(t,x)+ug(t,x,u),\quad x\in\RR^N.
\end{equation}
Typical examples of the kernel function $\kappa(\cdot)$ satisfying {\bf (H1)} include the probability density function of the normal distribution  $\kappa(x)=\frac{1}{\sqrt{(2\pi)^N}}e^{-\frac{|x|^2}{2}}$ and any $C^1$ convolution kernel function supported on a bounded ball $B(0,r)=\{x\in\RR^N\,|\, |x|<r\}$.

Note that \eqref{dirichlet-kpp-eq} is the nonlocal dispersal counterpart of the following reaction diffusion equation with Dirichlet boundary condition,
\begin{equation}
\label{random-dirichlet-eq}
\begin{cases}
u_t=\Delta u+u g(t,x,u),\quad x\in \Omega\cr
u=0,\quad x\in \p \Omega,
\end{cases}
\end{equation}
and \eqref{neumann-kpp-eq} is the nonlocal dispersal counterpart of the following reaction diffusion equation with Neumann boundary condition,
\begin{equation}
\label{random-neumann-eq}
\begin{cases}
u_t=\Delta u+u g(t,x,u),\quad x\in \Omega\cr
\frac{\p u}{\p n}=0,\quad x\in \p \Omega.
\end{cases}
\end{equation}
See \cite{CoElRo1, CoElRoWo, ShXi2}  for the relation between \eqref{dirichlet-kpp-eq} and\eqref{random-dirichlet-eq}, and the relation
between \eqref{neumann-kpp-eq} and \eqref{random-neumann-eq}.
Equations \eqref{dirichlet-kpp-eq} and  \eqref{neumann-kpp-eq} can  be viewed as nonlocal dispersal models for populations
 with growth function $ug(t,x,u)$ and with Dirichlet and Neumann boundary conditions, respectively.

Observe that \eqref{dirichlet-kpp-eq} (respectively  \eqref{neumann-kpp-eq}, \eqref{periodic-kpp-eq}) can be written as \eqref{main-nonlinear-eq} with $D=\Omega$ and $f(t,x,u)=-1+g(t,x,u)$ (respectively  $D=\Omega$ and $f(t,x,u)=-\int_D \kappa(y-x)dy+g(t,x,u)$,
$D=\RR^N$ and $f(t,x,u)=-1+g(t,x,u)$).
Hence the theory on the asymptotic dynamics of \eqref{main-nonlinear-eq} to be developed in the second part of the series can be applied to  \eqref{dirichlet-kpp-eq}, \eqref{neumann-kpp-eq}, and \eqref{periodic-kpp-eq}.
 Observe also that $u(t,x)\equiv 0$ is a solution of \eqref{main-nonlinear-eq}, which is refereed to as the {\it trivial solution}
of \eqref{main-nonlinear-eq}. If $a(t,x)=f(t,x,0)$, then \eqref{main-linear-eq}
is the linearization of \eqref{main-nonlinear-eq} at this trivial solution. Hence the principal spectral theory to be established for \eqref{main-linear-eq} in this part of the series
 has its own interests and also plays an important role in the study of the asymptotic dynamics of \eqref{main-nonlinear-eq}.

 Principal spectrum for linear random dispersal or reaction diffusion  equations has been extensively studied and is quite well understood in many cases. For example,
 consider the following random dispersal counterpart of \eqref{main-linear-eq} on a bounded smooth domain $D$ with Dirichlet boundary condition,
\begin{equation}
\label{random-linear-eq}
\begin{cases}
u_t=\Delta u+a(t,x)u,\quad x\in D\cr
u=0\quad x\in \p D.
\end{cases}
\end{equation}
For the periodic case ($a(t+T,x)=a(t,x)$ for all $x\in D$ and $t\in\RR$), there is well-known theory
 (see \cite{Hes}) yielding the existence of a principal eigenvalue $\lambda(a)$ and  eigenfunction $\phi(t,x)$, that is,
$$
\begin{cases}
-\phi_t(t,x)+\Delta \phi(t,x)+a(t,x)\phi(t,x)=\lambda(a)\phi(t,x),\quad x\in D\cr
\phi(t,x)=0\quad x\in \p D\cr
\phi(t+T,x)=\phi(t,x)>0\quad \forall\, t\in\RR,\,\, x\in D.
\end{cases}
$$
 Note that
 the principal eigenvalue of \eqref{random-linear-eq} in the time periodic case
 is a notion
related to the existence of an eigen-pair: an eigenvalue associated with a positive eigenfunction.
The principal eigenvalue theory for \eqref{random-linear-eq} in the time periodic case has been well extended to general time dependent  case  with the principal eigenvalue and eigenfunction in the time periodic case being replaced by principal Lyapunov exponents and principal Floquet bundles,  respectively
(see \cite{HuPo, HuPoSa, MiSh1, MiSh2, ShYi}, etc.).

 The principal spectrum for various special cases of \eqref{main-linear-eq} has been studied by many authors.  For example, when $D$ is bounded and $a(t,x)$ is independent of $t$ or periodic in $t$, the principal spectrum of \eqref{main-linear-eq} has been studied in \cite{Co1, GaRo,   HeShZh, HuShVi, LiCoWa,  RaSh, ShVi, ShXi, ShXi1, ShZh1, ShVo, SuLiLoYa}, etc.. When $D=\RR^N$ and $a(t,x)$ is periodic in both $t$ and $x$, or $a(t,x)\equiv a(x)$,  the principal spectrum of \eqref{main-linear-eq} has been studied in \cite{BeCoVo, CoDaMa1, RaSh, ShZh1}, etc..
  In comparison with the random dispersal operators, even when $a(t,x)\equiv a(x)$ is independent of $t$,   the operator
  ${L}: C(\bar D)\to C(\bar D)$, $({L}u)(x)=\int_D \kappa(y-x)u(y)dy+a(x)u(x)$, may not have an eigenvalue associated with a positive eigenfunction  when   $a(x)$ is not a constant function (see \cite{Co1, ShZh1} for examples).
 Because of this, to study the aspects of the spectral theory for nonlocal dispersal operators,
   the concept of principal spectrum point for nonlocal dispersal operators
 was  introduced in \cite{HuShVi} (see also \cite{RaSh, ShXi1}), and the concept of generalized principal eigenvalues for nonlocal dispersal operators
 was  introduced in \cite{BeCoVo} (see also \cite{Co1}).  Some criteria have been  established in \cite{RaSh, ShZh1}
 for the principal spectrum point of a time periodic dispersal operator to be an eigenvalue with a positive eigenfunction.
 In \cite{Co1},  some criteria were  established for the generalized principal eigenvalue of a time independent dispersal operator to be an eigenvalue
 with a positive eigenfunction.

However, there is not much study on the aspects of spectral theory  for \eqref{main-linear-eq} when $a(t,x)$ is not periodic in $t$.
 In this first part of the series,
we investigate the spectral theory  for  \eqref{main-linear-eq} from two aspects:  top Lyapunov exponents and generalized principal eigenvalues. In particular, we provide various characterizations of the top Lyapunov exponents and generalized principal eigenvalues of \eqref{main-linear-eq}, discuss the relations between them, and study the effect of time and space variations of $a(t,x)$ on them.
  The theory of the top Lyapunov exponents and generalized principal eigenvalues is referred to as {\it  the principal spectral theory}
 for the nonlocal dispersal operators.  In the second part of the series, we will study the asymptotic dynamics of \eqref{main-nonlinear-eq} applying the principal spectral theory to be developed in this part.

In the rest of the introduction, we present  the notations and definitions in subsection 1.1, state the main results in subsection 1.2, and
make some remarks on the concepts and results in subsection 1.3.

\subsection{Notations and definitions}

Let
\begin{equation}
\label{X-space-eq}
X(D)= C_{\rm unif}^b(\bar D)=\{u\in C(\bar D)\,|\, u\,\,\, \text{is uniformly continuous and bounded}\}
\end{equation}
with norm $\|u\|=\sup_{x\in D}|u(x)|$.  If no confusion will occur, we may put
$$
X=X(D).
$$
 For any $s\in\RR$ and $u_0\in X$, let $u(t,x;s,u_0)$ be the unique  solution of
\eqref{main-linear-eq} with $u(s,x;s,u_0)=u_0(x)$ (the existence and uniqueness of solutions of \eqref{main-linear-eq}
with given initial function $u_0\in X$ follow from  the general semigroup theory, see \cite{Paz}). Put
\begin{equation}
\label{Phi-t-s-eq}
\Phi(t,s;a)u_0=u(t,\cdot;s,u_0).
\end{equation}

\begin{definition}
\label{top-lyapunov-exp-def} Let
\begin{equation}
\label{lyapunov-exp-eq}
\lambda_{PL}(a)=\limsup_{t-s\to\infty}\frac{\ln \|\Phi(t,s;a)\|}{t-s}, \quad \lambda_{PL}^{'}(a)=\liminf_{t-s\to\infty} \frac{\ln \|\Phi(t,s;a)\|}{t-s}.
\end{equation}
$\lambda_{PL}(a)$ and $\lambda_{PL}^{'}(a)$ are called the {\rm top Lyapunov exponents} of \eqref{main-linear-eq}.
\end{definition}

For given
$\lambda\in\R$, define
\begin{equation*}
\Phi_\lambda(t,s;a)=e^{-\lambda(t-s)}\Phi(t,s;a),
\end{equation*}
where $\Phi(t,s;a)$ is as in \eqref{Phi-t-s-eq}.

\begin{definition}
\label{exp-dichotomy-def}
Given $\lambda\in\RR$,
$\{\Phi_\lambda(t,s;a)\}_{s,t\in\RR,s\leq t}$
is said to admit an {\rm exponential dichotomy (ED} for short) on $X$ if
there exist $\beta>0$ and $C>0$ and continuous projections
$P(s):X\to X$ $(s\in\RR)$ such that for any $s,t\in\RR$ with $s\leq t$  the
following holds:
\begin{enumerate}
\item[(1)] $\Phi_\lambda(t,s;a)P(s)=P(t)\Phi_\lambda(t,s;a)$;

\item[(2)] $\Phi_\lambda(t,s;a)|_{R(P(s))}: R(P(s))\to R(P(t))$ is an
isomorphism for $t\geq s$ (hence $\Phi_\lambda(s,t;a):=\Phi_\lambda(t,s;a)^{-1}:$
$R(P(t))\to R(P(s))$ is well defined);
\item[(3)]
\begin{equation*}
\|\Phi_\lambda(t,s;a)(I-P(s))\|\leq C e^{-\beta (t-s)},\quad t\geq s
\end{equation*}
\begin{equation*}
\|\Phi_\lambda(t,s;a)P(s)\|\leq C e^{\beta (t-s)},\quad t\leq s.
\end{equation*}
\end{enumerate}
\end{definition}

\begin{definition}
\label{dynamic-spec-def}
\begin{enumerate}
\item[(1)] $\lambda\in\RR$ is said to be in the {\rm dynamical
spectrum}, denoted by $\Sigma(a)$,
of \eqref{main-linear-eq} or  $\{\Phi(t,s;a)\}_{s\le t}$
 if $\Phi_\lambda(t,s;a)$
does not admit an ED.

\item[(2)]  $\lambda_{PD}(a)=\sup\{\lambda\in\Sigma(a)\}$ is called
the {\rm principal dynamical spectrum point} of  $\{\Phi(t,s;a)\}_{s\le t}$.
\end{enumerate}
\end{definition}

Throughout this paper, we say  that a {\it property holds for a function $u(t,x)$ for a.e. $t\in I\subset \RR$ and all $x\in E\subset \RR^N$}
if there is a subset $I_0$ of $I$ with zero Lebesgue measure such that the property holds for $u(t,x)$ for all
$(t,x)\in (I\setminus I_0)\times E$.

Let
\begin{equation}
\label{X-script-space-eq}
\mathcal{X}(D)=C_{\rm unif}^b(\RR\times\bar D):=\{u\in C(\RR\times \bar D)\,|\, u\,\, \text{is uniformly continuous and bounded}\}
\end{equation}
with the norm $\|u\|=\sup_{(t,x)\in\RR\times \bar D}|u(t,x)|$. In the absence of possible confusion, we may write
$$
\mathcal{X}=\mathcal{X}(D).
$$
Let $L(a):\mathcal{D}(L(a))\subset \mathcal{X}\to \mathcal{X}$
be defined as follows,
$$
(L(a)u)(t,x)=-\p_t u(t,x)+\int_D
\kappa(y-x)u(t,y)dy+a(t,x)u(t,x).
$$
Let
\begin{align*}
\Lambda_{PE}(a)=\{\lambda\in\RR\,|\, \exists \, \phi\in \mathcal{X},\,\, \inf_{t\in\RR} \phi(t,x)\ge \not\equiv 0,\,\,  (L(a)\phi)(t,x)\ge\lambda \phi(t,x)\,\, {\rm for}\,\, a.e.\, t\in \RR,\,\, {\rm all}\,\,   x\in \bar D\}
\end{align*}
and
\begin{align*}
\Lambda_{PE}^{'}(a)=\{\lambda\in\RR\,|\,
\exists \, \phi\in \mathcal{X},\,\,  \inf_{t\in\RR,x\in\bar D} \phi(t,x)>0,\,\,(L(a)\phi)(t,x)\le\lambda \phi(t,x)\,\, {\rm for}\,\, a.e.\, t\in \RR,\,\, {\rm all}\, \,  x\in \bar D\}.
\end{align*}

\begin{definition}
\label{generalized-principal-eigenvalue-def} Define
\begin{equation}
\label{generalized-principal-eigen-eq}
\lambda_{PE}(a)=\sup\{\lambda\,|\, \lambda\in\Lambda_{PE}(a)\}
\end{equation}
and
\begin{equation}
\label{generalized-principal-eigen-eq1}
\lambda_{PE}^{'}(a)=\inf\{\lambda\,|\, \lambda\in\Lambda_{PE}^{'}(a)\}.
\end{equation}
Both $\lambda_{PE}(a)$ and $\lambda_{PE}^{'}(a)$ are called {\rm generalized principal eigenvalues} of \eqref{main-linear-eq}.
\end{definition}

Let
\begin{equation}
\label{time-average-eq}
\hat a(x)=\lim_{T\to\infty}\frac{1}{T}\int_0^T a(t,x)dt
\end{equation}
((see Proposition \ref{almost-periodic-prop} for the existence of
$\hat a(\cdot)$). Let
\begin{equation}
\label{time-space-average1}
\bar a= \frac{1}{|D|}\int_D \hat a(x)dx
\end{equation}
when $D$ is bounded, and
\begin{equation}
\label{time-space-average2}
\bar a=\lim_{q_1,q_2,\cdots,q_N\to\infty}\frac{1}{q_1q_2\cdots q_N}\int_{0}^{q_N}\cdots\int_0^{q_2}\int_0^{q_1} \hat a(x_1,x_2,\cdots,x_N)dx_1dx_2\cdots dx_N
\end{equation}
when $D=\RR^N$ and $a(t,x)$ is almost periodic in $x$ uniformly with respect to $t\in\RR$ (see Proposition \ref{almost-periodic-prop} for the existence of
 $\bar a$). Note that $\hat a(x)$ is the time average of $a(t,x)$, and $\bar a$ is the space average
of $\hat a(x)$.

To discuss the monotonicity of $\lambda_{PL}(a)$, $\lambda_{PE}(a)$, and $\lambda_{PE}^{'}(a)$ with respect to the domain $D$, we may put
$$\Phi(t,s;a,D)=\Phi(t,s;a),\quad \Lambda_{PE}(a,D)= \Lambda_{PE}(a), \quad \Lambda_{PE}^{'}(a,D) = \Lambda_{PE}^{'}(a).
$$
and
$$
\lambda_{PL}(a,D)=\lambda_{PL}(a),\,\, \lambda_{PE}(a,D)=\lambda_{PE}(a),\,\,  \lambda_{PE}^{'}(a,D) =\lambda_{PE}^{'}(a).
$$

\subsection{Main results}

In this subsection, we state the main theorems of this paper. Throughout this subsection, we assume that
 $a(t,x)$ satisfies {\bf (H2)}. Sometime, we may also assume that

\medskip

\noindent {\bf (H2$)^{'}$}  $a(t,x)$ is limiting almost periodic in $t$ with respect to $x$ and is also limiting almost periodic in $x$ when $D=\RR^n$ (see Definition \ref {almost-periodic-def}).

The first theorem is on the relation between $\lambda_{PL}^{'}(a)$,
$\lambda_{PL}(a)$, and $\lambda_{PD}(a)$.

\begin{theorem}[Relations between $\lambda_{PL}^{'}(a)$, $\lambda_{PL}(a)$ and $\lambda_{PD}(a)$]
\label{relation-thm1}
$\quad$

\begin{itemize}
\item[(1)] For any $u_0\in X$ with
$\inf_{x\in D}u_0(x)>0$,
$$
\lambda_{PL}^{'}(a)=\lambda_{PL}(a)=\lim_{t-s\to\infty} \frac{\ln\|\Phi(t,s;a)\|}{t-s}=\lim_{t-s\to\infty} \frac{\ln\|\Phi(t,s;a)u_0\|}{t-s}.$$

\item[(2)]
$\lambda_{PL}(a)=\lambda_{PD}(a).$
\end{itemize}
\end{theorem}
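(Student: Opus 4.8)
The plan is to reduce everything to the scalar quantity $g(s,r):=\ln\|\Phi(s+r,s;a)\mathbf 1\|$ for $r\ge 0$, where $\mathbf 1\in X$ denotes the constant function $1$, and to exploit two features of \eqref{main-linear-eq}: the comparison principle (positivity of the evolution family) and the almost periodicity of $a$.

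First I would record the structural facts. By the comparison principle, $\Phi(t,s;a)$ is a positive operator on $X$; since $\mathbf 1\in X$ and $|u|\le\|u\|\mathbf 1$, positivity gives $|\Phi(t,s;a)u|\le\Phi(t,s;a)|u|\le\|u\|\,\Phi(t,s;a)\mathbf 1$, hence $\|\Phi(t,s;a)\|=\|\Phi(t,s;a)\mathbf 1\|$, and therefore $\lambda_{PL}(a)$, $\lambda_{PL}^{'}(a)$ are the $\limsup$, $\liminf$ of $g(s,t-s)/(t-s)$ as $t-s\to\infty$. If $m:=\inf u_0>0$ (so $M_0:=\sup u_0<\infty$), then $m\mathbf 1\le u_0\le M_0\mathbf 1$ yields $m\,\Phi(t,s;a)\mathbf 1\le\Phi(t,s;a)u_0\le M_0\,\Phi(t,s;a)\mathbf 1$, hence $\ln\|\Phi(t,s;a)u_0\|=g(s,t-s)+O(1)$ as $t-s\to\infty$; so once $g(s,t-s)/(t-s)$ is shown to have a limit, the last two quantities in (1) have that same limit. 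I would also note the elementary bounds: writing $K$ for the convolution operator and $A(t)=K+a(t,\cdot)$ for the generator, $\partial_t(\Phi(t,s;a)\mathbf 1)=K(\Phi(t,s;a)\mathbf 1)+a\,\Phi(t,s;a)\mathbf 1\ge -\|a\|_\infty\Phi(t,s;a)\mathbf 1$ gives $g(s,r)\ge-\|a\|_\infty r$, while $\|A(t)\|\le\|K\|+\|a\|_\infty\le 1+\|a\|_\infty=:C_1$ gives $g(s,r)\le C_1 r$; and if $\|a'-a\|\le\epsilon$, comparison of $\Phi(\cdot,\cdot;a')u_0$ with the super/subsolutions $e^{\pm\epsilon(t-s)}\Phi(\cdot,\cdot;a)u_0$ gives $e^{-\epsilon(t-s)}\Phi(t,s;a)u_0\le\Phi(t,s;a')u_0\le e^{\epsilon(t-s)}\Phi(t,s;a)u_0$ for $u_0\ge 0$.

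The heart of part (1) is to prove that $g(s,r)/r$ converges as $r\to\infty$ uniformly in $s$, and the main obstacle — the one genuinely nontrivial point — is that $s\mapsto g(s,r)$ is Lipschitz with a constant independent of $r$. Indeed, differentiating $h(s):=\Phi(s+r,s;a)\mathbf 1$ in $s$ gives $h'(s)=A(s+r)h(s)-\Phi(s+r,s;a)\bigl(A(s)\mathbf 1\bigr)$; the first term has norm $\le C_1\|h(s)\|$, and — using positivity a second time — the second has norm $\le\|A(s)\mathbf 1\|\cdot\|\Phi(s+r,s;a)\mathbf 1\|\le C_1\|h(s)\|$, so $\|h'(s)\|\le 2C_1\|h(s)\|$ and $|\partial_s g(s,r)|\le 2C_1$. (A naive estimate would only give a Lipschitz constant growing exponentially in $r$, which would be useless.) Combining this with almost periodicity, for $\epsilon>0$ the $\epsilon$-translation numbers $\tau$ of $a$ form a relatively dense set with gap $\le\ell_\epsilon$; the identity $\Phi(s+r,s;a(\cdot+\tau,\cdot))\mathbf 1=\Phi(s+\tau+r,s+\tau;a)\mathbf 1$ together with the $\epsilon$-perturbation estimate gives $|g(s+\tau,r)-g(s,r)|\le\epsilon r$, and for given $s_1,s_2$, picking an $\epsilon$-translation number $\tau$ with $|s_1+\tau-s_2|\le\ell_\epsilon$ yields $|g(s_1,r)-g(s_2,r)|\le\epsilon r+2C_1\ell_\epsilon$. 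Hence $\mathrm{osc}_s\,g(\cdot,r)\le\epsilon r+2C_1\ell_\epsilon$ for every $\epsilon>0$, so $\mathrm{osc}_s\,g(\cdot,r)/r\to 0$. Since $\overline g(r):=\sup_s g(s,r)$ is subadditive (from the cocycle property and $\|\Phi\|=\|\Phi\mathbf 1\|$) and satisfies $|\overline g(r)|\le C_1 r$, Fekete's lemma gives $\overline g(r)/r\to L:=\inf_r\overline g(r)/r$, whence $\underline g(r)/r:=\inf_s g(s,r)/r=\bigl(\overline g(r)-\mathrm{osc}_s\,g(\cdot,r)\bigr)/r\to L$ as well. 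Feeding these into the $\limsup$/$\liminf$ of $g(s,t-s)/(t-s)$ (for the matching lower bounds, choose $s$ nearly realizing $\overline g(r)$, resp. $\underline g(r)$) gives $\lambda_{PL}^{'}(a)=\lambda_{PL}(a)=L$ and the existence of all limits in (1).

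Part (2) then follows from (1) by soft dichotomy arguments. If $\lambda>\lambda_{PL}(a)$, then by (1) $\|\Phi_\lambda(t,s;a)\|\le Ce^{-\beta(t-s)}$ for some $\beta,C>0$, i.e. $\Phi_\lambda(\cdot,\cdot;a)$ admits an ED with $P\equiv 0$, so $\lambda\notin\Sigma(a)$; hence $\lambda_{PD}(a)\le\lambda_{PL}(a)$. Conversely I would show $\lambda_{PL}(a)\in\Sigma(a)$: if $\Phi_{\lambda_{PL}(a)}(\cdot,\cdot;a)$ had an ED with projections $P(s)$ and rate $\beta>0$, then either $P\equiv 0$, in which case $\|\Phi(t,s;a)\|\le Ce^{(\lambda_{PL}(a)-\beta)(t-s)}$, contradicting (1), or $R(P(s))\ne\{0\}$ for all $s$ (the unstable dimension being independent of $s$ by the isomorphism property), in which case for $0\ne v\in R(P(s))$ the second exponential estimate of the ED gives $\|\Phi_\lambda(t,s;a)v\|\ge C^{-1}e^{\beta(t-s)}\|v\|$, so $\|\Phi(t,s;a)\|\ge C^{-1}e^{(\lambda_{PL}(a)+\beta)(t-s)}$, again contradicting (1). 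Thus $\lambda_{PD}(a)=\sup\Sigma(a)\ge\lambda_{PL}(a)$, and combining, $\lambda_{PD}(a)=\lambda_{PL}(a)$.
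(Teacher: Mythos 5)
Your proof is correct, and for part (1) it takes a genuinely different route from the paper's. The paper constructs the hull $H(a)$ as a compact minimal flow carrying a unique ergodic (Haar) measure $\nu$, invokes the subadditive ergodic theorem to get a limit $\lambda_0(a)$ for $\nu$-a.e.\ $b\in H(a)$, and then uses density of the full-measure set together with compactness and the comparison principle to upgrade this to a uniform limit on all of $H(a)$; specializing to $b=\sigma_s a$ then gives the statement. You instead work directly with $g(s,r)=\ln\|\Phi(s+r,s;a)\mathbf 1\|$, prove the uniform-in-$r$ Lipschitz estimate $|g(s_1,r)-g(s_2,r)|\le 2C_1|s_1-s_2|$ (the crucial point is applying positivity to bound $\|\Phi(s+r,s;a)(A(s)\mathbf 1)\|\le C_1\|\Phi(s+r,s;a)\mathbf 1\|$, which defeats the naive exponentially growing constant), combine this with the translation-number structure of almost periodicity to get $\mathrm{osc}_s\,g(\cdot,r)/r\to 0$, and then apply Fekete's lemma to the subadditive function $\bar g(r)=\sup_s g(s,r)$. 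This avoids the ergodic-theorem machinery entirely and is more self-contained; the paper's route, on the other hand, is the more standard one in nonautonomous spectral theory and extends more transparently to random/measurable coefficients. For part (2), your argument is essentially the paper's, except that you spell out the case analysis on the dichotomy projection ($P\equiv 0$ versus $R(P(s))\neq\{0\}$) when showing $\lambda_{PL}(a)\in\Sigma(a)$, which the paper leaves implicit. One small cosmetic point in your writeup: $s\mapsto g(s,r)$ need not be differentiable (the sup norm isn't smooth), so the statement $|\partial_s g(s,r)|\le 2C_1$ should be read as the Lipschitz bound it implies, obtained via a.e.\ differentiability of the Lipschitz function $s\mapsto\|h(s)\|$ together with $\|h'(s)\|\le 2C_1\|h(s)\|$; this is exactly what your subsequent argument uses, so there is no gap.
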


The second theorem is on the relations between $\lambda_{PE}(a)$, $\lambda_{PE}^{'}(a)$, and $\lambda_{PL}(a)$.

\begin{theorem}[Relations  between $\lambda_{PE}(a)$, $\lambda_{PE}^{'}(a)$, and $\lambda_{PL}(a)$]
\label{relation-thm2}

$\quad$

\begin{itemize}
\item[(1)]
$
\lambda_{PE}^{'}(a)=\lambda_{PL}(a).
$

\item[(2)]
$
\lambda_{PE}(a)\le \lambda_{PL}(a).
$
If $a(t,x)$  satisfies (H2$)^{'}$, then
$\lambda_{PE}(a)=\lambda_{PL}(a).
$

\item[(3)] If $a(t,x)\equiv a(t)$, then $\lambda_{PE}(a)=\lambda_{PE}^{'}(a)=\lambda_{PL}(a)=\hat a+\lambda_{PL}(0)$.
\end{itemize}
\end{theorem}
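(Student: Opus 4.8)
My plan rests on three ingredients: the evolution family $\Phi(t,s;a)$ is order preserving; by (H1) (specifically $\kappa(0)>0$) it obeys a Harnack-type bound, namely $\|\Phi(t,s;a)\|=\|\Phi(t,s;a)\mathbf 1\|$ and $(\Phi(t,s;a)\mathbf 1)(x)\ge C^{-1}\|\Phi(t,s;a)\mathbf 1\|$ for all $x\in\bar D$ and $t-s\ge 1$; and Theorem~\ref{relation-thm1}. Writing $(\mathcal K u)(x)=\int_D\kappa(y-x)u(y)\,dy$, the inequality defining $\Lambda_{PE}^{'}(a)$ (resp.\ $\Lambda_{PE}(a)$) says exactly that the test function is a supersolution (resp.\ subsolution) of $\partial_t u=\mathcal K u+(a-\lambda)u$, whose evolution family is $\Phi_\lambda(t,s;a)$; the Carath\'eodory comparison principle for this positive problem links the two sides of every assertion, and the points of the statement are obtained by playing these facts against Theorem~\ref{relation-thm1}.

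Part (1). For $\lambda_{PL}(a)\le\lambda_{PE}^{'}(a)$, take $\lambda\in\Lambda_{PE}^{'}(a)$ with witness $\phi$ and $\delta:=\inf_{t,x}\phi>0$: since $\phi$ is a supersolution, $\Phi_\lambda(t,s;a)\phi(s,\cdot)\le\phi(t,\cdot)$, and squeezing any $u_0$ with $\|u_0\|\le 1$ between $\pm\delta^{-1}\phi(s,\cdot)$ gives $\|\Phi_\lambda(t,s;a)\|\le\|\phi\|/\delta$ for all $t\ge s$, hence $\lambda_{PL}(a)\le\lambda$. For the reverse, fix $\lambda>\lambda_{PL}(a)$; by Theorem~\ref{relation-thm1}(2) this $\lambda$ lies above $\sup\Sigma(a)$, so $\Phi_\lambda(t,s;a)$ admits an exponential dichotomy with zero unstable projection, i.e.\ $\|\Phi_\lambda(t,s;a)\|\le Ce^{-\beta(t-s)}$ for $t\ge s$; then $\phi(t,\cdot):=\int_{-\infty}^t\Phi_\lambda(t,s;a)\mathbf 1\,ds$ is bounded, bounded below (using $\Phi(t,s;a)\mathbf 1\ge e^{-\|a\|(t-s)}\mathbf 1$, from comparison with $\partial_t v=av$), uniformly continuous, and satisfies $L(a)\phi=\lambda\phi-\mathbf 1\le\lambda\phi$, so $\lambda\in\Lambda_{PE}^{'}(a)$ and $\lambda_{PE}^{'}(a)\le\lambda_{PL}(a)$. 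For the inequality in Part (2): if $\lambda\in\Lambda_{PE}(a)$ with witness $\phi\ge 0$, $\phi\not\equiv 0$, then $\phi$ is a subsolution, so $\phi(t,\cdot)\le\Phi_\lambda(t,s;a)\phi(s,\cdot)\le\|\phi\|\,\Phi_\lambda(t,s;a)\mathbf 1$; evaluating at a point where $\phi>0$ and letting $s\to-\infty$ gives $\|\Phi(t,s;a)\|\ge\mathrm{const}\cdot e^{\lambda(t-s)}$, hence $\lambda\le\lambda_{PL}(a)$.

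The equality $\lambda_{PE}(a)=\lambda_{PL}(a)$ in Part (2) under (H2$)^{'}$ is the heart of the matter, and amounts to producing $\phi\in\mathcal X$ with $\inf_{t,x}\phi>0$ and $L(a)\phi=\lambda_{PL}(a)\phi$ — equivalently, a bounded, bounded-below, uniformly continuous entire positive solution of $\partial_t u=\mathcal K u+(a-\lambda_{PL}(a))u$. The plan is to normalize the orbit of $\mathbf 1$: put $\tilde v_n:=\Phi_{\lambda_{PL}(a)}(t,-n;a)\mathbf 1/\|\Phi_{\lambda_{PL}(a)}(t,-n;a)\mathbf 1\|$; the Harnack bound gives $\tilde v_n\ge C^{-1}$ uniformly, boundedness of $\mathcal K$ gives $t$-equicontinuity and the $\mathcal K$-term gives $x$-equicontinuity, so along a subsequence $\tilde v_n\to\phi$ locally uniformly, with $\|\phi(t,\cdot)\|\equiv 1$, $\phi\ge C^{-1}$, and $\partial_t\phi=\mathcal K\phi+(a-\lambda_{PL}(a)-\gamma)\phi$ for a bounded $\gamma$ satisfying $\frac{1}{t-s}\int_s^t\gamma(\tau)\,d\tau\to 0$ as $t-s\to\infty$ (this last from Theorem~\ref{relation-thm1}(1) applied to $u_0=\mathbf 1$). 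This already yields $\lambda_{PE}(a)\ge\lambda_{PL}(a)+\inf_t\gamma$, but to reach equality one must remove the drift $\gamma$, and I expect this to be the main obstacle. The route I would take is to select the subsequence along the recurrence (almost-period) structure of $a$ furnished by (H2$)^{'}$ so that the normalizing factors stay bounded away from $0$ and $\infty$ — equivalently so that $t\mapsto\int_0^t\gamma(\tau)\,d\tau$ is bounded — after which $e^{\int_0^t\gamma(\tau)\,d\tau}\phi$ is the sought entire solution; alternatively, one constructs a continuous positive principal Floquet section over the compact hull of $a$ via the monotone skew-product theory and strips off its exponential drift. In either approach, the role of (H2$)^{'}$ is precisely to keep the normalization from drifting.

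Part (3). When $a=a(t)$, the substitution $u=e^{\int_s^t a(\sigma)\,d\sigma}v$ turns \eqref{main-linear-eq} into $\partial_t v=\mathcal K v$, so $\Phi(t,s;a)=e^{\int_s^t a(\sigma)\,d\sigma}e^{(t-s)\mathcal K}$; dividing $\ln\|\Phi(t,s;a)\|$ by $t-s$ and using the uniform convergence of time means (Proposition~\ref{almost-periodic-prop}) together with $\frac1\tau\ln\|e^{\tau\mathcal K}\|\to\lambda_{PL}(0)$ gives $\lambda_{PL}(a)=\lambda_{PL}^{'}(a)=\hat a+\lambda_{PL}(0)$. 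By Part (1) $\lambda_{PE}^{'}(a)=\lambda_{PL}(a)$, and by Part (2) $\lambda_{PE}(a)\le\lambda_{PL}(a)$; for the reverse bound one exploits that here $\mathcal K$ has a genuine positive eigenfunction $\psi^{*}$ with $\mathcal K\psi^{*}=\lambda_{PL}(0)\psi^{*}$ and $\inf_D\psi^{*}>0$ (Krein--Rutman, since $\mathcal K$ is compact and irreducible when $D$ is bounded; $\psi^{*}\equiv\mathbf 1$ and $\lambda_{PL}(0)=1$ when $D=\RR^N$). Approximate $a$ uniformly by trigonometric polynomials $a_n$, so that $\hat a_n\to\hat a$ and $G_n(t):=\int_0^t(a_n(\tau)-\hat a_n)\,d\tau$ is bounded, and set $\phi_n(t,x):=e^{G_n(t)}\psi^{*}(x)\in\mathcal X$; then $L(a)\phi_n=\big(\lambda_{PL}(0)+\hat a_n+(a-a_n)\big)\phi_n\ge\big(\lambda_{PL}(0)+\hat a_n-\|a-a_n\|_\infty\big)\phi_n$, so $\lambda_{PL}(0)+\hat a_n-\|a-a_n\|_\infty\in\Lambda_{PE}(a)$, and letting $n\to\infty$ gives $\lambda_{PE}(a)\ge\hat a+\lambda_{PL}(0)=\lambda_{PL}(a)$. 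This closes all the chains of equalities.
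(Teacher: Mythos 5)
Parts (1), the inequality $\lambda_{PE}(a)\le\lambda_{PL}(a)$, and Part (3) are essentially sound and track the paper's own argument. In (1) the construction $\phi(t,\cdot)=\int_{-\infty}^t\Phi_\lambda(t,s;a)\mathbf 1\,ds$ and the two comparison-principle estimates match the paper's Steps 1 and 2; the paper routes the ``bounded below and uniformly continuous'' claim for this $\phi$ through an ODE lemma (Lemma \ref{nonhomogeneous-lm1}, after checking $\lambda>\sup_x\hat a(x)$ via Lemma \ref{lower-bound-lem}), which is worth knowing since your appeal to ``$\mathcal K$-term gives $x$-equicontinuity'' is not spelled out. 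In (3) you invoke Krein--Rutman to produce a uniformly positive $\psi^*$ for $\mathcal K$ directly and then set $\phi_n=e^{G_n}\psi^*$; the paper instead shows $\Lambda_{PE}(a)=\Lambda_{PE}(\hat a)$ whenever $\int_0^t a-\hat a t$ is bounded and then uses the same trigonometric-polynomial approximation plus Lemma \ref{continuity-lem1}. Both routes work; yours additionally needs a word on why, for bounded $D$, $\mathcal K$ is irreducible and the Krein--Rutman eigenfunction is bounded away from $0$ on $\bar D$ (the paper gets this for free from Proposition \ref{periodic-prop}(2) applied to constant $\hat a$).

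The genuine gap is exactly where you flag it: the equality $\lambda_{PE}(a)=\lambda_{PL}(a)$ under (H2$)'$. Your plan --- normalize backward orbits $\Phi_{\lambda_{PL}(a)}(\cdot,-n;a)\mathbf 1$, pass to a limit, and strip off the drift $\gamma$ --- would, if it succeeded, produce an entire bounded uniformly positive solution of $\partial_t u=\mathcal Ku+(a-\lambda_{PL}(a))u$, i.e.\ make $\lambda_{PL}(a)$ a genuine principal eigenvalue with a positive eigenfunction. But this is precisely what can fail for nonlocal dispersal: as the paper recalls (and as \cite{Co1,ShZh1} show by example), even for time-independent $a=a(x)$ the operator $u\mapsto\mathcal Ku+au$ need have no eigenvalue with a positive eigenfunction. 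So there is in general no bounded drift to strip off, and the ``principal Floquet section over the hull'' alternative is not available either: for nonlocal operators these bundles are not known to exist without extra hypotheses, which is why \cite{HuShVi,RaSh} work so hard around their absence. The one-sided bound you extract, $\lambda_{PE}(a)\ge\lambda_{PL}(a)+\inf_t\gamma$, is vacuous unless $\gamma$ can be controlled, and nothing in (H2$)'$ controls it through your construction.

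The paper avoids constructing any exact eigenfunction. Its Step 2 treats periodic $a$ by feeding Proposition \ref{periodic-prop}(3): there is $a_\epsilon$ with $\|a-a_\epsilon\|<\epsilon$ for which a genuine positive periodic eigenfunction $\phi_\epsilon$ with eigenvalue $\lambda_{PL}(a_\epsilon)$ exists; then $\phi_\epsilon$ is an admissible test function showing $\lambda_{PL}(a_\epsilon)-\|a-a_\epsilon\|\in\Lambda_{PE}(a)$, and Lipschitz continuity of $\lambda_{PL}$ (Lemma \ref{continuity-lem1}) gives $\lambda_{PE}(a)\ge\lambda_{PL}(a)-2\epsilon$. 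Step 3 then uses (H2$)'$ only to pass from periodic to limiting almost periodic by one more application of Lemma \ref{continuity-lem1}. In short, the load-bearing tool is the periodic principal-eigenvalue \emph{approximation} theory, not an exact Floquet construction; replacing your construction with this double-approximation scheme is what is needed to close the argument.
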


The third theorem is on the effects of time and space variations on $\lambda_{PE}(a)$.

\begin{theorem}[Effects of time and space variations on $\lambda_{PE}(a)$]
\label{variation-thm1}

$\quad$

\begin{itemize}
\item[(1)] $\lambda_{PE}(a)\ge \sup_{x\in D}\hat a(x)$. If $a(t,x)$ satisfies (H2$)^{'}$, then
$\lambda_{PE}(a) \ge \lambda_{PE}(\hat{a}) \ge \sup_{x\in D}\hat a(x).$

\item[(2)] If $D$ is bounded,  $a(t,x)\equiv a(x)$, and $\kappa(\cdot)$ is symmetric, then
$$\lambda_{PE}(a)\ge \bar a+\frac{1}{|D|}\int_D\int_D \kappa(y-x)dydx,
$$
 where $|D|$ is the Lebesgue measure of $D$.

 \item[(3)]  If $D=\RR^N$, $a(t,x)\equiv a(x)$  is almost periodic in $x$, and
 $\kappa(\cdot)$ is symmetric,  then
 $$\lambda_{PE}(a)\ge \bar a+1.
 $$
\end{itemize}
\end{theorem}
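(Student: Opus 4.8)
The common strategy is: to prove $\lambda_{PE}(a)\ge c$ it suffices, by Definition~\ref{generalized-principal-eigenvalue-def}, to exhibit for each $\e>0$ a function $\phi\in\mathcal X$ with $\inf_t\phi(t,x)\ge0$ for all $x$, $\phi\not\equiv0$, and $(L(a)\phi)(t,x)\ge(c-\e)\phi(t,x)$ for a.e.\ $t$ and all $x$; when $a$ satisfies (H2$)'$ one may instead use Theorem~\ref{relation-thm2}(2) to replace $\lambda_{PE}(a)$ by $\lambda_{PL}(a)$ and bound $\|\Phi(t,s;a)\|$ from below. For the first inequality in~(1), fix $x_0\in D$ (automatically interior, $D$ being open) and $\e>0$; by uniform continuity of $a$ choose $\delta>0$ with $\overline{B(x_0,\delta)}\subset D$ and $a(t,x)\ge a(t,x_0)-\e$ for $|x-x_0|\le\delta$. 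Put $g(t)=a(t,x_0)-\hat a(x_0)+\e$, an almost periodic function with mean $\hat g=\e>0$, set $g^-=\max(-g,0)$, and define
\[
\psi(t)=\begin{cases}\exp\big(-\int_0^t g^-(\tau)\,d\tau\big),& t\ge0,\\ \exp\big(\int_0^t g(\tau)\,d\tau\big),& t\le0.\end{cases}
\]
Then $\psi$ is positive, Lipschitz, satisfies $\psi'(t)/\psi(t)\le g(t)$ a.e.\ (since $-g^-\le g$ on $[0,\infty)$), is $\le1$ on $[0,\infty)$, and tends to $0$ as $t\to\pm\infty$ because $\hat g>0$; hence $\psi$ is bounded. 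Let $\rho\in X(D)$ be continuous with $\operatorname{supp}\rho\subset B(x_0,\delta)$, $\rho\ge0$, $\rho\not\equiv0$, and set $\phi(t,x)=\psi(t)\rho(x)$. Where $\rho(x)=0$, $(L(a)\phi)(t,x)=\psi(t)\int_D\kappa(y-x)\rho(y)\,dy\ge0$, so the inequality with $c=\hat a(x_0)-2\e$ is automatic; where $\rho(x)>0$, dividing by $\psi(t)\rho(x)>0$ and dropping the nonnegative term $\rho(x)^{-1}\int_D\kappa(y-x)\rho(y)\,dy$ reduces it to $-\psi'/\psi+a(t,x)\ge\hat a(x_0)-2\e$, which holds by $a(t,x)\ge a(t,x_0)-\e$ and $\psi'/\psi\le g$. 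Thus $\hat a(x_0)-2\e\in\Lambda_{PE}(a)$; letting $\e\to0$ and supremizing over $x_0\in D$ gives $\lambda_{PE}(a)\ge\sup_{x\in D}\hat a(x)$. For the refinement under (H2$)'$: applying this to the time-independent coefficient $\hat a$ (which satisfies (H2$)'$) gives $\lambda_{PE}(\hat a)\ge\sup_x\hat a(x)$, while Theorem~\ref{relation-thm2}(2) gives $\lambda_{PE}(a)=\lambda_{PL}(a)$ and $\lambda_{PE}(\hat a)=\lambda_{PL}(\hat a)$, so it remains to invoke the comparison $\lambda_{PL}(a)\ge\lambda_{PL}(\hat a)$ (time--averaging the zeroth order coefficient does not increase the principal Lyapunov exponent; this follows from a Jensen-type comparison of $\Phi(t,s;a)\mathbf 1$ with $\Phi(t,s;\hat a)\mathbf 1$, or from the earlier development).

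For~(2), since $a\equiv a(x)$ is time-independent it satisfies (H2$)'$, so $\lambda_{PE}(a)=\lambda_{PL}(a)$ by Theorem~\ref{relation-thm2}(2), and it suffices to bound $\|\Phi(t,0;a)\|$ from below. Because $\kappa$ is symmetric, the bounded operator $\mathcal L u=\int_D\kappa(y-\cdot)u(y)\,dy+a(\cdot)u$ is self-adjoint on $L^2(D)$, and on $L^2(D)$ the evolution coincides with $e^{t\mathcal L}$. Take $\rho_0=|D|^{-1/2}\mathbf 1_D\in X(D)$, so $\rho_0\ge0$ and $\|\rho_0\|_{L^2}=1$. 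Jensen's inequality applied to the spectral measure of $\mathcal L$ at $\rho_0$ (with the convex function $s\mapsto e^{2ts}$) gives $\|e^{t\mathcal L}\rho_0\|_{L^2}\ge e^{t\langle\mathcal L\rho_0,\rho_0\rangle}$, and $\langle\mathcal L\rho_0,\rho_0\rangle=\bar a+\frac1{|D|}\int_D\int_D\kappa(y-x)\,dy\,dx=:c$. Since $\|f\|_{L^\infty(D)}\ge|D|^{-1/2}\|f\|_{L^2(D)}$ for $D$ bounded, $\|\Phi(t,0;a)\|\ge\|\Phi(t,0;a)\rho_0\|_\infty/\|\rho_0\|_\infty\ge\|e^{t\mathcal L}\rho_0\|_{L^2}\ge e^{tc}$, whence $\lambda_{PE}(a)=\lambda_{PL}(a)\ge c$.

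For~(3) I would reduce to~(2) via domain monotonicity of $\lambda_{PE}$ (realized by extending a test function on a sub-domain by zero): for each $R>0$, $\lambda_{PE}(a,\RR^N)\ge\lambda_{PE}(a,B_R)$ where $B_R$ is the ball of radius $R$; applying~(2) to $a|_{B_R}$ (still time-independent, $\kappa$ still symmetric) gives $\lambda_{PE}(a,B_R)\ge\frac1{|B_R|}\int_{B_R}a(x)\,dx+\frac1{|B_R|}\int_{B_R}\int_{B_R}\kappa(y-x)\,dy\,dx$. Letting $R\to\infty$, the first term converges to $\bar a$ by the mean value property of almost periodic functions, and the second to $\int_{\RR^N}\kappa=1$ (since $\int_{B_R}\kappa(y-x)\,dy\to1$ for all $x$ outside an $o(|B_R|)$ subset of $B_R$), so $\lambda_{PE}(a,\RR^N)\ge\bar a+1$.

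The main obstacle is concentrated in~(1): the obvious candidate $\psi(t)=\exp(\int_0^t(a(\tau,x_0)-\hat a(x_0))\,d\tau)$ is in general unbounded, since the primitive of a mean-zero almost periodic function need not be bounded, so one must aim for an eigenvalue strictly below $\hat a(x_0)$ and build $\psi$ by the piecewise recipe above, which is bounded precisely because the perturbed $g$ then has strictly positive mean. In~(2)--(3) the corresponding point is that the clean $L^2$/variational computation — which is what produces the constants $\bar a$ and $1$ — lives in the wrong space; it is transferred to the sup-norm definition of $\lambda_{PE}$ through the identity $\lambda_{PE}=\lambda_{PL}$ of Theorem~\ref{relation-thm2}(2), the inequality $\|f\|_\infty\ge|D|^{-1/2}\|f\|_{L^2}$ on bounded domains, and — for~(3) — domain monotonicity together with the almost periodic mean value property.
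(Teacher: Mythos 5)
The first inequality in~(1) has a genuine gap. Your test function is $\phi(t,x)=\psi(t)\rho(x)$ where the piecewise-defined $\psi$ satisfies $\psi(t)\to 0$ as $t\to\pm\infty$ (you note this yourself), so $\inf_{t\in\RR}\phi(t,x)=0$ for every $x$. But membership in $\Lambda_{PE}(a)$ requires $\inf_{t\in\RR}\phi(t,x)\ge\not\equiv 0$, i.e.\ the function $x\mapsto\inf_{t\in\RR}\phi(t,x)$ must be nonnegative \emph{and not identically zero}; your $\phi$ makes it identically zero, so you have not produced an admissible supersolution and the conclusion $\hat a(x_0)-2\e\in\Lambda_{PE}(a)$ does not follow. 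You correctly diagnosed that the naive primitive $\int_0^t(a(\tau,x_0)-\hat a(x_0))\,d\tau$ need not be bounded, but your piecewise repair only makes $\ln\psi$ bounded above; you need $\ln\psi$ bounded in \emph{both} directions. That is precisely what Lemma~\ref{mean-lm2} (Nadin--Rossi) supplies: a bounded $A_0\in W^{1,\infty}(\RR)$ with $a(t,x_0)+A_0'(t)\ge\hat a(x_0)-\e$ a.e.; setting $\psi=e^{-A_0}$ gives a $\psi$ bounded above and below by positive constants, and the rest of your tensor-product argument goes through. (The paper's proof goes further and replaces $\rho$ by a global positive stationary solution $\tilde v^*$ from Lemma~\ref{eigenvalue-lm1}, but that refinement is not where your gap lies.) A secondary issue in the (H2$)'$ refinement of (1): you invoke $\lambda_{PL}(a)\ge\lambda_{PL}(\hat a)$, which is Theorem~\ref{variation-thm2}(1); but in the paper that statement is proved \emph{using} Theorem~\ref{variation-thm1}(1) when $D$ is unbounded, so this route risks circularity. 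The paper instead approximates $a$ by periodic $a_n$ and invokes the periodic-case comparison $\lambda_s(a_n)\ge\lambda_s(\hat a_n)\ge\sup_x\hat a_n(x)$ from Proposition~\ref{periodic-prop}(4), then passes to the limit by Lemma~\ref{continuity-lem1}.

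Parts (2) and (3) are essentially sound and represent a legitimately different, more self-contained route than the paper's. Your $L^2$ spectral/Jensen argument for (2) (self-adjointness of $\mathcal L$, $\|e^{t\mathcal L}\rho_0\|_{L^2}\ge e^{t\langle\mathcal L\rho_0,\rho_0\rangle}$, and $\|\cdot\|_\infty\ge|D|^{-1/2}\|\cdot\|_{L^2}$) is correct and carries out in-line what the paper delegates to the arguments of \cite[Theorem~2.1(4)]{ShXi}. In (3), the extension-by-zero argument for monotonicity of $\lambda_{PE}$ in the domain needs a small touch-up: the zero-extension of $\phi\in X(B_R)$ need not be uniformly continuous on $\RR^N$ unless $\phi$ vanishes on $\partial B_R$, so one should first multiply by a cutoff (losing an $\e$ in the eigenvalue, harmless in the $R\to\infty$ limit), or, as the paper does, route the monotonicity through $\lambda_{PL}$ (Lemma~\ref{monotonicity-lem}) using $\lambda_{PL}=\lambda_{PE}$ for time-independent $a$. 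Your limit calculation producing $\bar a+1$ is correct.
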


The fourth theorem is on the effects of time and space variations on $\lambda_{PL}(a)$.

\begin{theorem}[Effects of time and space variations on $\lambda_{PL}(a)$]
\label{variation-thm2}

$\quad$

\begin{itemize}
\item[(1)]  If $D$ is bounded or $D=\RR^N$ and $a$ satisfies (H2$)^{'}$, then  $\lambda_{PL}(a)\ge { \lambda_{PL}(\hat a)}\ge \sup_{x\in D}\hat a(x)$.

\item[(2)] If $D$ is bounded and $\kappa(\cdot)$ is symmetric,  then
$$\lambda_{PL}(a)\ge \bar a+\frac{1}{|D|}\int_D\int_D \kappa(y-x)dydx.
$$

\item[(3)] If $D=\RR^N$,  $a(t,x)$ is almost periodic in $x$ uniformly with respect to $t\in\RR$,  and $\kappa(\cdot)$ is symmetric,  then
$$\lambda_{PL}(a)\ge \bar a+1.$$
\end{itemize}
\end{theorem}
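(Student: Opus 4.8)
The plan is to reduce all three parts to Theorem 1.3 using the identity $\lambda_{PE}^{'}(a)=\lambda_{PL}(a)$ from Theorem 1.2(1), together with the elementary fact that $\lambda_{PL}$ dominates $\lambda_{PE}$ and is monotone in $a$. For part (1), when $a$ satisfies (H2$)^{'}$ I would combine $\lambda_{PL}(a)=\lambda_{PE}(a)$ (Theorem 1.2(2), available under (H2$)^{'}$) with the chain $\lambda_{PE}(a)\ge\lambda_{PE}(\hat a)\ge\sup_{x\in D}\hat a(x)$ from Theorem 1.3(1); when $D$ is bounded the inequality $\lambda_{PL}(a)\ge\lambda_{PL}(\hat a)$ should instead be obtained directly by a monotonicity/continuity argument on $\Phi(t,s;a)$ under time-averaging, plus a comparison between $\lambda_{PL}(\hat a)$ and the principal eigenvalue of the time-independent operator $\int_D\kappa(y-\cdot)u(y)dy+\hat a(\cdot)u$, whose principal spectrum point is bounded below by $\sup_{x\in D}\hat a(x)$ (this is the standard lower bound for nonlocal operators, since taking a test function concentrated near the maximizer of $\hat a$ makes the Rayleigh-type quotient approach $\sup\hat a$, the nonlocal term contributing a nonnegative amount).

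For parts (2) and (3), which are the heart of the theorem, I would use Theorem 1.2(1) to write $\lambda_{PL}(a)=\lambda_{PE}^{'}(a)$ and then exhibit a suitable sub-solution to push the bound, or — more efficiently — invoke Theorem 1.3(2)–(3) to get the lower bounds for $\lambda_{PE}(a)$ and then upgrade from $\lambda_{PE}$ to $\lambda_{PL}$. The upgrade is immediate when $a(t,x)\equiv a(x)$ is time-independent, because in the autonomous-in-time case the operator is self-adjoint (here symmetry of $\kappa$ is used) so the principal spectrum point equals the top of the $L^2$-spectrum, and for such operators $\lambda_{PL}(a)=\lambda_{PE}(a)$; alternatively, one observes directly that the constant-in-time test function $\phi\equiv 1$ (for bounded $D$) gives $(L(a)\phi)(x)=\int_D\kappa(y-x)dy+a(x)$, and averaging this over $D$ together with a variational characterization of $\lambda_{PL}$ in the symmetric case yields $\lambda_{PL}(a)\ge\bar a+\frac{1}{|D|}\int_D\int_D\kappa(y-x)\,dy\,dx$. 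For $D=\RR^N$ the same computation with $\phi\equiv 1$ gives $(L(a)\phi)(x)=1+a(x)$ (using $\int_{\RR^N}\kappa=1$), and the spatial-average argument combined with almost periodicity of $a$ in $x$ delivers $\lambda_{PL}(a)\ge\bar a+1$.

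The cleanest route, which I would actually carry out, is: first establish the general variational lower bound $\lambda_{PL}(a)\ge \limsup_{\tau\to\infty}\frac{1}{\tau}\int_0^\tau\langle (L_{\rm sym}(a)\phi)(t,\cdot),\phi(t,\cdot)\rangle/\|\phi\|_2^2\,dt$ valid in the symmetric case — this follows from Theorem 1.1(1), which identifies $\lambda_{PL}(a)$ with $\lim\frac{\ln\|\Phi(t,s;a)u_0\|}{t-s}$ for positive $u_0$, combined with the $L^2$ energy estimate $\frac{d}{dt}\|u\|_2^2 = 2\langle L_{\rm sym}(a)u,u\rangle$ that holds precisely because $\int_D\kappa(y-x)u(y)dy$ is self-adjoint when $\kappa$ is even. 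Then plug in the test function $\phi\equiv 1$ (on bounded $D$) or a slowly varying spatial cutoff approaching the constant (on $\RR^N$), and evaluate the resulting time-and-space averages using the existence of $\hat a$ and $\bar a$ from Proposition 1.1. The main obstacle will be part (3) on $D=\RR^N$: the constant function is not in $L^2(\RR^N)$, so I cannot simply test with $\phi\equiv 1$; instead I must use cutoffs $\phi_R = \chi(x/R)$, control the error from the nonlocal term acting on the cutoff (here (H1)'s exponential tail bound on $\kappa$ is essential to show the boundary layer contributes $o(R^N)$ to the energy), and verify that the spatial average of $a$ over the support of $\phi_R$ converges to $\bar a$ uniformly enough — this last point is where almost periodicity of $a$ in $x$ (as opposed to mere boundedness) is genuinely needed, via Proposition 1.1's existence of the mean value $\bar a$. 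Everything else is routine bookkeeping with the semigroup $\Phi(t,s;a)$ and the already-established Theorems 1.1–1.3.
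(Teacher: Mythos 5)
Your route for part (1) under (H2$)^{'}$ --- the chain $\lambda_{PL}(a)=\lambda_{PE}(a)\ge\lambda_{PE}(\hat a)\ge\sup_{x\in D}\hat a(x)$ via Theorems \ref{relation-thm2}(2) and \ref{variation-thm1}(1) --- matches the paper. But for part (1) with $D$ bounded and without (H2$)^{'}$, the step $\lambda_{PL}(a)\ge\lambda_{PL}(\hat a)$ is not a ``monotonicity/continuity argument'': replacing $a$ by its time average $\hat a$ is not a pointwise comparison, so Proposition \ref{comparison-prop} gives nothing, and Lemma \ref{continuity-lem1} controls perturbations but not the sign of an inequality. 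The paper's actual proof is substantially more involved: it writes the pointwise Lyapunov identity \eqref{lyapunov-exp-eq0} for the normalized positive solution $v$, approximates $\hat a$ by a nearby $a^*$ possessing a genuine autonomous principal eigenpair (Proposition \ref{periodic-prop}(2),(3)), time-averages the difference of the two identities, and then --- decisively --- invokes Lemma \ref{lyapunov-exp-lm2} to produce a point $x^*$ at which $\frac{1}{T}\int_0^T w(t,y)/w(t,x^*)\,dt\ge 1$ for all $y$, so the nonlocal term in the averaged difference has the favourable sign. Your sketch contains no substitute for that lemma.

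For parts (2)--(3) your main route tacitly assumes $a(t,x)\equiv a(x)$, but the theorem allows $t$-dependence. The variational inequality you propose, $\lambda_{PL}(a)\ge\limsup_{\tau\to\infty}\frac{1}{\tau}\int_0^\tau\langle L_{\rm sym}(a(t,\cdot))\phi,\phi\rangle/\|\phi\|_2^2\,dt$ for a \emph{fixed} test function $\phi$, is not valid in the time-dependent case: the energy identity gives $\frac{d}{dt}\ln\|u(t)\|_2^2 = 2\,\langle L_{\rm sym}(a(t,\cdot))u(t),u(t)\rangle/\|u(t)\|_2^2$ for the \emph{actual} solution $u(t)$, and once $u(t)$ drifts away from the constant there is no reason the Rayleigh quotient of $L_{\rm sym}(a(t,\cdot))$ at $u(t)$ dominates the Rayleigh quotient at $\phi\equiv 1$. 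The paper's deduction is much shorter and performs the reduction you are missing: apply part (1) to get $\lambda_{PL}(a)\ge\lambda_{PL}(\hat a)$; since $\hat a$ is $t$-independent it trivially satisfies (H2$)^{'}$, so $\lambda_{PL}(\hat a)=\lambda_{PE}(\hat a)$ by Theorem \ref{relation-thm2}(2); then apply Theorem \ref{variation-thm1}(2),(3) to $\hat a$ to obtain $\lambda_{PE}(\hat a)\ge\bar a+\frac{1}{|D|}\int_D\int_D\kappa(y-x)\,dy\,dx$ (resp.\ $\ge\bar a+1$). Your constant-test-function calculation is a reasonable way to re-derive Theorem \ref{variation-thm1}(2),(3) for the autonomous $\hat a$, but the reduction to $\hat a$ via part (1) is the step you omitted.
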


The last theorem is on the characterization of $\lambda_{PE}(a)$ and $\lambda_{PE}^{'}(a)$ when $a(t,x)$ is independent of $t$ or periodic in $t$.

\begin{theorem}[Characterization of $\lambda_{PE}(a)$ and $\lambda_{PE}^{'}(a)$]  Assume that $a$ satisfies (H2$)^{'}$.
\label{characterization-thm1}
\begin{itemize}
\item[(1)] If $a(t,x)\equiv a(x)$, then
$$
 \lambda_{PE}(a)=\sup\{\lambda\,|\, \lambda\in \tilde \Lambda_{PE}(a)\}=\inf\{\lambda\,|\,
 \lambda\in\tilde \Lambda_{PE}^{'}(a)\}=\lambda_{PE}^{'}(a),
 $$
 where
\begin{align*}
\tilde\Lambda_{PE}(a)=\{\lambda\in\RR\,|\, \exists \, \phi\in {X},\,\,  \phi(x)\ge \not\equiv  0,\,\, \int_D \kappa(y-x)\phi(y)dy+a(x)\phi(x)\ge\lambda \phi(x)\,\, \forall\, x\in \bar D\}
\end{align*}
and
\begin{align*}
\tilde \Lambda_{PE}^{'}(a)= \{\lambda\in\RR\,|\, \exists \, \phi\in {X},\,\, \inf_{x\in D} \phi(x)>0,\,\, \int_D \kappa(y-x)\phi(y)dy+a(x)\phi(x)\le \lambda \phi(x)\,\, \forall\, x\in \bar D\}.
\end{align*}

\item[(2)] If $a(t+T,x)\equiv a(t,x)$, then
$$
 \lambda_{PE}(a)=\sup\{\lambda\,|\, \lambda\in \hat \Lambda_{PE}(a)\}=\inf\{\lambda\,|\,
 \lambda\in\hat \Lambda_{PE}^{'}(a)\}=\lambda_{PE}^{'}(a),
 $$
 where
\begin{align*}
\hat \Lambda_{PE}(a)=\{\lambda\in\RR\,|\, \exists \, \phi\in \mathcal{X}_T,\,\, \inf_{t\in\RR} \phi(t,x)\ge \not\equiv  0,\,\,  (L(a)\phi)(t,x)\ge\lambda \phi(t,x)\,\, {\rm for}\,\, a.e.\, t\in \RR,\,\, {\rm all}\, \,  x\in \bar D\},
\end{align*}
\begin{align*}
\hat \Lambda_{PE}^{'}(a)=\{\lambda\in\RR\,|\,
\exists \, \phi\in \mathcal{X}_T,\,\,  \inf_{t\in\RR,x\in D} \phi(t,x)>0,\,\,(L(a)\phi)(t,x)\le\lambda \phi(t,x)\,\, {\rm for}\,\, a.e.\, t\in \RR,\,\, {\rm all}\,\,  x\in \bar D\}.
\end{align*}
and
$$
\mathcal{X}_T=\{\phi\in \mathcal{X}\,|\, \phi(t+T,x)=\phi(t,x)\}.
$$
\end{itemize}
\end{theorem}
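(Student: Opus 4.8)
The plan is to reduce the autonomous and time-periodic characterizations to the general definitions of $\lambda_{PE}(a)$ and $\lambda_{PE}^{'}(a)$ together with Theorem~\ref{relation-thm2}. The starting observation is that the test classes $\tilde\Lambda_{PE}(a)$, $\tilde\Lambda_{PE}^{'}(a)$ (resp. $\hat\Lambda_{PE}(a)$, $\hat\Lambda_{PE}^{'}(a)$) use $t$-independent (resp. $T$-periodic) test functions, so they sit inside the classes $\Lambda_{PE}(a)$, $\Lambda_{PE}^{'}(a)$ of Definition~\ref{generalized-principal-eigenvalue-def}: if $\phi\in X$ satisfies $\int_D\kappa(y-x)\phi(y)dy+a(x)\phi(x)\ge\lambda\phi(x)$ then, viewed as an element of $\mathcal X$ constant in $t$, it satisfies $(L(a)\phi)(t,x)=\int_D\kappa(y-x)\phi(y)dy+a(x)\phi(x)\ge\lambda\phi(t,x)$ since $\partial_t\phi\equiv 0$. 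Hence $\tilde\Lambda_{PE}(a)\subseteq\Lambda_{PE}(a)$ and $\tilde\Lambda_{PE}^{'}(a)\subseteq\Lambda_{PE}^{'}(a)$ (and likewise with hats), giving immediately
$$
\sup\tilde\Lambda_{PE}(a)\le\lambda_{PE}(a),\qquad \inf\tilde\Lambda_{PE}^{'}(a)\ge\lambda_{PE}^{'}(a),
$$
and the analogous inequalities in the periodic case. Combined with $\lambda_{PE}(a)\le\lambda_{PE}^{'}(a)$ — which holds because any $\phi$ admissible for $\Lambda_{PE}$ and any $\psi$ admissible for $\Lambda_{PE}^{'}$ can be compared via the comparison principle for $\Phi(t,s;a)$, forcing the corresponding $\lambda$-values to be ordered (this is presumably established earlier together with Theorem~\ref{relation-thm2}(1)) — it suffices to prove the two "reverse" inequalities $\sup\tilde\Lambda_{PE}(a)\ge\lambda_{PE}^{'}(a)$ and $\inf\tilde\Lambda_{PE}^{'}(a)\le\lambda_{PE}(a)$, i.e. to construct $t$-independent (resp. $T$-periodic) sub/supersolutions with the correct $\lambda$.

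The mechanism for producing these time-independent test functions is the structure of the solution semigroup when $a(t,x)\equiv a(x)$: here $\Phi(t,s;a)=e^{(t-s)\mathcal L}$ for the bounded operator $\mathcal L u=\int_D\kappa(\cdot-x)u(y)dy+a(x)u$ on $X$, so $\lambda_{PL}(a)$ equals the spectral bound $s(\mathcal L)=\sup\{\mathrm{Re}\,\mu:\mu\in\sigma(\mathcal L)\}$, and by Theorem~\ref{relation-thm2}(1) this also equals $\lambda_{PE}^{'}(a)$ (and dominates $\lambda_{PE}(a)$; under (H2)$^{'}$ it equals $\lambda_{PE}(a)$ by part (2)). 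The route I would take: for $\lambda<\lambda_{PL}(a)=\lambda_{PE}^{'}(a)$, apply the generalized Krein--Rutman / resolvent-positivity theory for $\mathcal L$ (the positive semigroup $e^{t\mathcal L}$ is generated by $\mathcal L$, which has the form "bounded positive operator plus multiplication") to produce, for $\lambda$ close enough to $s(\mathcal L)$, a function $\phi\in X$ with $\phi\ge 0$, $\phi\not\equiv 0$ and $\mathcal L\phi\ge\lambda\phi$ — e.g. by taking $\phi=(\mu I-\mathcal L)^{-1}\psi$ for suitable $\psi\ge 0$ and $\mu>s(\mathcal L)$, using that $(\mu I-\mathcal L)^{-1}$ is positive and that $s(\mathcal L)$ is in the spectrum. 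That gives $\lambda\in\tilde\Lambda_{PE}(a)$ for all $\lambda<\lambda_{PL}(a)$, hence $\sup\tilde\Lambda_{PE}(a)\ge\lambda_{PL}(a)\ge\lambda_{PE}(a)$; under (H2)$^{'}$, $\lambda_{PL}(a)=\lambda_{PE}(a)=\lambda_{PE}^{'}(a)$ and the chain of equalities closes. Symmetrically, for $\lambda>\lambda_{PL}(a)$ one builds a strictly positive supersolution $\psi\in X$, $\inf_D\psi>0$, with $\mathcal L\psi\le\lambda\psi$ — again from the resolvent $(\mu I-\mathcal L)^{-1}$ applied to a strictly positive function, now with $\mu$ just above $s(\mathcal L)$ so that $\psi$ inherits a positive infimum (here one uses that the constant function $1$ or a uniformly positive function maps to a uniformly positive function under the positive resolvent, using the uniform continuity/boundedness built into $X$). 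This yields $\inf\tilde\Lambda_{PE}^{'}(a)\le\lambda_{PL}(a)=\lambda_{PE}^{'}(a)$, closing part (1).

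For part (2), the same argument is run for the period map. When $a(t+T,x)\equiv a(t,x)$, let $\Pi=\Phi(T,0;a):X\to X$, a compact (by (H1), the kernel is smooth and has exponential decay, so the integral part is a nice perturbation) strongly positive-type operator; set $r=r(\Pi)$ its spectral radius, so that $\lambda_{PL}(a)=\frac1T\ln r$ and, by Theorem~\ref{relation-thm2}(1), $=\lambda_{PE}^{'}(a)$. Given $\lambda<\lambda_{PL}(a)$, pick $\phi_0\in X$, $\phi_0\ge 0$, $\phi_0\not\equiv 0$ approximating the Krein--Rutman eigenvector of $\Pi$ well enough that $\Pi\phi_0\ge e^{\lambda T}\phi_0$, and define $\phi(t,\cdot)=e^{-\lambda t}\Phi(t,0;a)\phi_0$ for $t\in[0,T]$, extended $T$-periodically; the inequality $\Pi\phi_0\ge e^{\lambda T}\phi_0$ makes this extension consistent and continuous, $\phi\in\mathcal X_T$, and differentiating shows $(L(a)\phi)(t,x)\ge\lambda\phi(t,x)$ for a.e.\ $t$ (the "for a.e.\ $t$" absorbs the possible corner at the period). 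Hence $\lambda\in\hat\Lambda_{PE}(a)$, so $\sup\hat\Lambda_{PE}(a)\ge\lambda_{PL}(a)$; the strictly-positive supersolution for $\lambda>\lambda_{PL}(a)$ is built identically starting from a uniformly positive $\psi_0$ with $\Pi\psi_0\le e^{\lambda T}\psi_0$. Together with the trivial inclusions from the first paragraph and Theorem~\ref{relation-thm2} (which under (H2)$^{'}$ forces $\lambda_{PE}(a)=\lambda_{PE}^{'}(a)=\lambda_{PL}(a)$ in the periodic case as well), all four quantities coincide.

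The main obstacle I anticipate is the construction of the strictly positive supersolution — i.e. producing $\phi$ with $\inf_{t,x}\phi>0$ (not merely $\phi\ge\not\equiv 0$) satisfying the reversed inequality for $\lambda$ arbitrarily close to $\lambda_{PL}(a)$ from above. For the sub-solution side one only needs $\phi\ge\not\equiv 0$, which the positive resolvent/Krein--Rutman machinery delivers cheaply, but for the super-solution one must control the infimum uniformly, and when $\mathcal L$ (or $\Pi$) fails to have a genuine positive eigenfunction — precisely the pathology flagged in the introduction for nonlocal operators — one cannot simply take $\phi$ to be an eigenfunction. The fix is to perturb: work with $\mathcal L+\varepsilon I$ or replace $a$ by $a+\varepsilon$, solve an approximate eigenvalue problem or use $\psi_\varepsilon=(\mu_\varepsilon I-\mathcal L)^{-1}\mathbf 1$ with $\mu_\varepsilon\downarrow s(\mathcal L)$, verify $\inf_D\psi_\varepsilon>0$ using positivity of the resolvent together with $\kappa(0)>0$ and the structure of $X$, and let $\varepsilon\to 0$; this limiting argument, and the analogous one for $\Pi$ using that $\Phi(t,0;a)$ maps uniformly positive functions to uniformly positive functions on the compact interval $[0,T]$, is where the technical care is needed, and is presumably where the paper invokes lemmas proved earlier in the text.
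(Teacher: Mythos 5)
Your high-level reduction is sound and parallels the paper's: observe that time-independent (resp.\ $T$-periodic) test functions embed into $\Lambda_{PE}(a)$, $\Lambda_{PE}'(a)$ to give the "easy" inequalities, then close the loop using Theorem~\ref{relation-thm2}; the supersolution construction you sketch (apply the positive resolvent of $\mathcal{L}$, or equivalently $\int_{-\infty}^t\Phi_\lambda(t,s;a)\mathbf 1\,ds$, to $\mathbf{1}$ with $\mu>s(\mathcal{L})$, and read off $\inf\psi>0$ directly from positivity of the resolvent) is essentially what the paper does by observing that the Step-1 construction of Theorem~\ref{relation-thm2}(1) produces a time-independent function when $a$ is time-independent, and it requires no perturbation.

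The gap is on the subsolution side, and you have the location of the difficulty exactly backwards. You write that the supersolution is ``the main obstacle'' because one must control $\inf\phi>0$, and that ``for the sub-solution side one only needs $\phi\ge\not\equiv 0$, which the positive resolvent/Krein--Rutman machinery delivers cheaply.'' But taking $\phi=(\mu I-\mathcal{L})^{-1}\psi$ for $\mu>s(\mathcal{L})$ and $\psi\ge 0$ gives $\mathcal{L}\phi=\mu\phi-\psi\le\mu\phi$, i.e.\ a \emph{super}solution; it does not give $\mathcal{L}\phi\ge\lambda\phi$ for $\lambda<s(\mathcal{L})$, and there is no normalization/blow-up trick that rescues it uniformly, because the error term $\psi/\|\phi\|$ cannot be absorbed pointwise where $\phi$ is small. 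Worse, the Krein--Rutman machinery you invoke is precisely what \emph{fails} here: $\mathcal{L}=\mathcal{K}+M_a$ is a compact operator plus a non-compact multiplier, so neither $e^{t\mathcal{L}}$ nor the period map $\Pi=\Phi(T,0;a)$ is compact (your assertion that $\Pi$ is compact ``by (H1)'' is false -- the multiplier part survives), and the principal spectrum point of $\mathcal{L}$ need not be an eigenvalue with a positive eigenfunction, which is the pathology the paper flags in the introduction. This is exactly why the paper's proof of the hard direction $\tilde\lambda_{PE}(a)\ge\lambda_{PL}(a)$ (resp.\ $\hat\lambda_{PE}(a)\ge\lambda_{PL}(a)$) does \emph{not} go through an abstract eigenvector: it re-runs Steps 2--3 of the proof of Theorem~\ref{relation-thm2}(2), invoking Proposition~\ref{periodic-prop}(2)(3) to perturb $a$ to a nearby $a_\epsilon$ whose (time-independent or $T$-periodic) eigenvalue problem \emph{does} admit a genuine positive eigenfunction $\phi_\epsilon$, producing $\lambda_{PL}(a_\epsilon)-\|a-a_\epsilon\|\in\tilde\Lambda_{PE}(a)$, and then lets $\epsilon\to 0$ via the continuity of Lemma~\ref{continuity-lem1}. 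You gesture at a perturbation ``fix'' in your last paragraph but attach it to the supersolution; it is the subsolution that needs it, and it is not an abstract resolvent perturbation but the concrete modification of $\hat a$ near its maximum (flattening it to order $N-1$) that makes the principal eigenvalue exist for $a_\epsilon$, together with the observation that when $a$ is already time-independent (resp.\ $T$-periodic) this approximation can be carried out within the time-independent (resp.\ $T$-periodic) class, so the resulting eigenfunction lands in $X$ (resp.\ $\mathcal{X}_T$).
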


\subsection{Remarks}

In this subsection, we provide the following remarks on the main results established in this paper.

\begin{itemize}

\item[1.] Spectral theory for a linear evolution equation is strongly related to the growth/decay rates of its solutions.
    From the point of view of dynamical systems, one usually employ  the top Lyapunov exponents and principal dynamical spectrum point to characterize  the largest  growth rate of the solutions of a linear evolution equation.
    Theorem \ref{relation-thm1} shows that the top Lyapunov exponents and principal dynamical spectrum point of \eqref{main-linear-eq} are the same, which is then exactly the largest growth rate of the solutions of \eqref{main-linear-eq}.

\item[2.] The notion of generalized principal eigenvalues for time independent nonlocal dispersal equations  was introduced in
\cite{BeCoVo, Co1, CoDaMa1} (see item 3 in the following for some detail). It is a natural extension of principal eigenvalues, which is related
  to the existence of  eigenvalues associated with  positive eigenfunctions.
   Theorem \ref{relation-thm2} shows that
  $$
  \lambda_{PE}(a)=\lambda_{PE}^{'}(a)=\lambda_{PL}(a)
  $$
  when $a(t,x)$ is limiting almost periodic in $t$, and in general,
 $$
  \lambda_{PE}(a)\le \lambda_{PE}^{'}(a)=\lambda_{PL}(a).
  $$
  Therefore, in any case, $\lambda_{PE}^{'}(a)$ is exactly the largest growth rate of the solutions of \eqref{main-linear-eq}.
  It is definitely of great importance that the largest growth rate of the solutions of \eqref{main-linear-eq} can be characterized by two different approaches, one by the top Lyapunov exponent $\lambda_{PL}(a)$ and the other by the generalized
 principal eigenvalue $\lambda_{PE}^{'}(a)$.

  \item[3.] When $a(t,x)\equiv a(x)$, the following generalized principal  eigenvalues were introduced  in \cite{BeCoVo}
  for \eqref{main-linear-eq}:
  $$
  \lambda_p(a)=\sup\{\lambda\in\RR\,|\, \exists \phi\in C(\bar D),\, \phi>0,\,\, \int_D \kappa(y-x)\phi(y)dy+a(x)\phi+\lambda\phi\le 0\,\, {\rm in}\,\, D\},
  $$
  and
  $$
  \lambda_p^{'}(a)=\inf\{\lambda\in\RR\,|\, \exists \, \phi\in C(D)\cap L^\infty(D),\,\, \phi\ge \not\equiv 0,\,\,
  \int_D\kappa(y-x)\phi(y)dy+a(x)\phi+\lambda\phi\ge 0\,\, {\rm in}\,\, D\}.
  $$
  Note that, in our definitions of $\lambda_{PL}(a)$ and $\lambda_{PE}^{'}(a)$, we require the function $\phi$
  in the sets $\Lambda_{PL}(a)$ and $\Lambda_{PE}^{'}(a)$ to be uniformly continuous and bounded.
  By Theorem \ref{characterization-thm1}, we have the following relation between $\lambda_p(a), \lambda_p^{'}(a)$, and $\lambda_{PE}(a), \lambda_{PE}^{'}(a)$:
  \begin{equation}
  \label{pl-pe-eq0}
 -\lambda_p(a)\le \lambda_{PE}^{'}(a)=  \lambda_{PE}(a)\le -\lambda_p^{'}(a),
  \end{equation}
  which implies that
  \begin{equation}
  \label{pl-pe-eq00}
  \lambda_p^{'}(a)\le \lambda_p(a).
  \end{equation}
 It should be pointed out that, among others, it was  proved in \cite{BeCoVo} that, if $\kappa(\cdot)$ has compact support, then
  \begin{equation}
  \label{pl-pe-eq000}
  \lambda_p(a)=\lambda_p^{'}(a)\quad {\rm when}\,\, D \,\, {\rm is\,\,\, bounded}
  \end{equation}
  and
  \begin{equation}
  \label{pl-pe-eq0000}
  \lambda_p^{'}(a)\le \lambda_p(a)\quad {\rm when}\,\, D\,\, {\rm is\,\,\, unbounded}
  \end{equation}
  (see \cite[Theorem 1.1]{BeCoVo}  and \cite[Theorem 1.2]{BeCoVo}).
   It should also be pointed out that the paper \cite{BeCoVo} dealt  with more general kernel functions $\kappa(x,y)$.
    Note that  in Theorem \ref{relation-thm2}(2), it was proved that  \eqref{pl-pe-eq00} holds without the assumption that $\kappa(\cdot)$ has compact support. Hence \eqref{pl-pe-eq00} is an improvement of \eqref{pl-pe-eq0000} when the kernel function in \cite{BeCoVo}  $\kappa(x,y)=\kappa(y-x)$.

\item[4.] Theorems \ref{variation-thm1} and \ref{variation-thm2} are on the influence of  time and space variation of
$a(t,x)$ on the top Lyapunov exponent $\lambda_{PL}(a)$  and the generalized principal eigenvalue $\lambda_{PE}(a)$.
Theorem \ref{variation-thm2}(1) shows that  time variation does not reduce the top Lyapunov exponent $\lambda_{PL}(a)$.
Since $\lambda_{PE}^{'}(a)=\lambda_{PL}(a)$, this also holds for $\lambda_{PE}^{'}(a).$
Theorem \ref{variation-thm1}(2)   indicates that  space variation of
$a(t,x)\equiv a(x)$ does not reduce the generalized principal eigenvalue $\lambda_{PE}(a)$  when \eqref{main-linear-eq} is viewed as a nonlocal dispersal equation with Neumann type boundary condition on the bounded domain $D$. To be more precise, write \eqref{main-linear-eq} with $a(t,x)\equiv a(x)$ as
\begin{equation}
\label{main-linear-eq1}
u_t=\int_D \kappa(y-x)[u(t,y)-u(t,x)]dy +\tilde a(x)u(t,x),\quad x\in \bar D,
\end{equation}
where $\tilde a(x)=\int_D \kappa(y-x)dy+a(x)$. \eqref{main-linear-eq1} can then be viewed as a nonlocal dispersal equation with reaction term $\tilde a(x) u$ and Neumann type boundary condition.  Theorem \ref{variation-thm1}(2) then follows from
the arguments of
\cite[Theorem 2.1]{ShXi1}.  Note that the random dispersal counterpart of \eqref{main-linear-eq1} is the following reaction diffusion equation on $D$ with Neumann boundary condition,
$$
\begin{cases}
u_t=\Delta u+\tilde a(x)u,\quad x\in D\cr
\frac{\p u}{\p n}=0,\quad x\in\p D.
\end{cases}
$$
Theorem \ref{variation-thm1}(3)  indicates that the space variation of
$a(t,x)\equiv a(x)$ does not reduce the generalized principal eigenvalue $\lambda_{PE}(a)$  when \eqref{main-linear-eq}
 with $a(t,x)\equiv a(x)$ is viewed as the following nonlocal dispersal equation on $\RR^N$ with reaction term $\tilde a(x)u$,
\begin{equation}
\label{main-linear-eq2}
u_t=\int_{\RR^N} \kappa(y-x)(u(t,y)-u(t,x))dy+\tilde a(x)u,\quad x\in\RR^N,
\end{equation}
where $\tilde a(x)=1+a(x)$. When $a(x)$ is periodic in $x$, Theorem \ref{variation-thm1}(3) follows from the arguments of \cite[Theorem 2.1]{HeShZh}. When $a(x)$ is almost periodic in $x$, Theorem \ref{variation-thm1}(3) is new.
Note that Theorem \ref{variation-thm2}(2), (3) follow from Theorem \ref{variation-thm1}(2), (3) and the fact that
$\lambda_{PL}(a)\ge \lambda_{PE}(a)$.

\item[5.] There are several interesting open problems. For example,
 it remains open whether $\lambda_{PE}(a)=\lambda_{PE}^{'}(a)$   for any $a$ satisfying {\bf (H2)}. If $\lambda_{PE}(a)=\lambda_{PE}^{'}(a)$,   under what condition
  there is a positive function $\phi(t,x)$, such that
 $$
 -\phi_t+\int_D\kappa(y-x)\phi(t,y)dy+a(t,x)\phi(t,x)=\lambda_{PE}(a)\phi(t,x)\quad \forall\,\, t\in\RR,\,\, x\in \bar D.
 $$
 If there is such $\phi(t,x)$, we may call $\lambda_{PE}(a)$ the {\it principal eigenvalue} of \eqref{main-linear-eq}. It remains open whether $\lambda_{PE}(a)\ge \lambda_{PE}(\hat{a})$ for any $a$ satisfying {\bf (H2)}.

\item[6.] It should be pointed out that the definitions of top Lyapunov exponents, principal dynamical spectrum point, and
generalized principal eigenvalues can be applied to \eqref{main-linear-eq} when $a(t,x)$ is a general time dependent function.
But some results in the above theorems may not hold when $a(t,x)$ is not almost periodic in $t$, for example,
$\lambda_{PL}^{'}(a)=\lambda_{PL}(a)$ may not be true when $a(t,x)$ is not almost periodic in $t$. The aspects of spectral theory of \eqref{main-linear-eq} with general time dependent $a(t,x)$ will not be discussed in this paper.
\end{itemize}

The rest of the paper is organized as follows: In section 2 we present some preliminary materials  to be used in the  proofs of the main results. In section 3 we study the top Lyapunov exponents
 of \eqref{main-linear-eq} and prove Theorem \ref{relation-thm1}. In section 4, we explore the relations between the top Lyapunov exponents and generalized principal eigenvalues, and prove Theorem \ref{relation-thm2}.   We discuss the effects of time and space variations on the generalized principal eigenvalue  $\lambda_{PE}(a)$ and
prove Theorem \ref{relation-thm2} in section 5. We consider the effects of space and time variations on  the top Lyapunov exponent  $\lambda_{PL}(a)$ and
 prove Theorem \ref{variation-thm2} in section 6. In the last section, we provide some characterization for the generalized principal eigenvalues  $\lambda_{PE}(a)$ and $\lambda_{PE}^{'}(a)$ when $a(t,x)$ is independent of $t$ or periodic in $t$ and prove Theorem \ref{characterization-thm1}.

\section{Preliminary}

 In this section, we collect  some preliminary materials to be used in the proofs of Theorems \ref{relation-thm1}-\ref{characterization-thm1} in later sections.

First, we present the definitions of almost periodic functions and limiting almost periodic functions, and some basic properties
of almost periodic functions.

\begin{definition}
\label{almost-periodic-def}
\begin{itemize}
\item[(1)]
Let  $E\subset\RR^N$ and $f \in C(\mathbb{R} \times E, \mathbb{R})$. $f(t,x)$ is   said to be
 {\rm almost periodic in t uniformly with respect to $x \in E$} if it is uniformly continuous in $(t,x)\in \RR\times E$ and
  for any $\epsilon > 0$, $T(\epsilon)$ is relatively dense in $\RR$, where
$$
T(\epsilon)=\{\tau\in\RR\,|\,   |f(t + \tau, x) - f(t,x) | \le \epsilon\,\, \forall\, t\in\RR,\, x\in E\}.
 $$

\item[(2)] Let $E\subset\RR^N$ and $f\in C(\RR\times E,\RR)$. $f$ is said to be {\rm limiting almost periodic in $t$ uniformly with respect to $x\in E$}  if there is a sequence $f_n(t,x)$ of uniformly continuous functions which are periodic in $t$ such that $$
    \lim_{n\to\infty} f_n(t,x)=f(t,x)
    $$
    uniformly in $(t,x)\in\RR\times E$.

\item[(3)] Let $f \in C(\RR\times \RR^N,\RR)$.  $f(t,x)$ is said to be {\rm almost periodic in $x$ uniformly with respect to $t\in\RR$} if $f$ is uniformly continuous in $(t,x)\in\RR\times\RR^N$ and for each $1\le i\le N$, $f(t,x_1,x_2,\cdots,x_N)$ is almost periodic
    in $x_i$ uniformly with respect to $t\in\RR$ and $x_j\in\RR$ for $1\le j\le N, j\not =i$.
    \end{itemize}
\end{definition}

\begin{proposition}
\label{almost-periodic-prop}
\begin{itemize}

\item[(1)] If $f(t,x)$ is almost periodic in $t$ uniformly with respect to $x\in E$, then for any sequence $\{t_n\}\subset\RR$, there is a subsequence $\{t_{n_k}\}$ such that the limit $\lim_{k\to\infty} f(t+t_{n_k},x)$ exists uniformly in
    $(t,x)\in\RR\times E$.

\item[(2)] If $f(t,x)$ is almost periodic in $t$ uniformly with respect to $x\in E$, then
 the limit
 $$\hat f(x):=\lim_{T\to\infty}\frac{1}{T}\int_0^T f(t,x)dt
 $$
   exists uniformly with respect to $x\in E$.
 If $E=\RR^N$  and for each $1\le i\le N$, $f(t,x_1,x_2,\cdots,x_N)$ is also almost periodic in $x_i$ uniformly with respect to
 $t\in\RR$  and $x_j\in\RR$ for $1\le j\le N$, $j\not =i$, then  the limit
 $$
 \bar f:=\lim_{q_1,q_2,\cdots,q_N\to \infty}\frac{1}{q_1 q_2\cdots q_N}\int_0^{q_N}\cdots\int_0^{q_2}\int_0^{p_N} \hat f(x_1,x_2,\cdots,x_N)dx_1dx_2\cdots  dx_N$$
  exists.

    \item[(3)]  Given an almost periodic function $f(t)$, for any $\epsilon > 0,$ there exists a  trigonometric polynomial $P_{\epsilon}(t) = \sum_{k=1}^{N_{\epsilon}}b_{k,\epsilon}e^{i\lambda_{k,\epsilon}t}$ such that $$\underset{t\in\RR}{\sup}\;\|f(t) - P_{\epsilon}(t)\| < \epsilon.$$
\end{itemize}
\end{proposition}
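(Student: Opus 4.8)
The plan is to treat the three items as standard facts from the Bohr--Bochner theory of almost periodic functions, proved in the order (1), (2), (3); item (2) will rely only on the relative density of $\epsilon$-almost periods plus Fubini, and item (3) will build on the mean constructed in item (2). For item (1) (Bochner's normality criterion) I would show that the family of translates $\mathcal{T}=\{f(\cdot+\tau,\cdot):\tau\in\RR\}$ is precompact in $C_{\rm unif}^b(\RR\times E)$ with the sup norm; the desired subsequence is then extracted from $\{f(\cdot+t_n,\cdot)\}$. Fix $\epsilon>0$ and let $\ell(\epsilon)$ be an inclusion length for the relatively dense set $T(\epsilon)$. Given $\tau\in\RR$, pick $\sigma\in T(\epsilon)\cap[\tau,\tau+\ell(\epsilon)]$ and set $r=\tau-\sigma\in[-\ell(\epsilon),0]$; then $f(t+\tau,x)=f((t+r)+\sigma,x)$ and, $\sigma$ being an $\epsilon$-almost period, $\norminfty{f(\cdot+\tau,\cdot)-f(\cdot+r,\cdot)}\le\epsilon$. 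Hence $\mathcal{T}$ lies in the $\epsilon$-neighborhood of $\{f(\cdot+r,\cdot):r\in[-\ell(\epsilon),0]\}$, which is the continuous (by uniform continuity of $f$) image of a compact interval, hence covered by finitely many $\epsilon$-balls; so $\mathcal{T}$ is covered by finitely many $2\epsilon$-balls, and completeness of $C_{\rm unif}^b(\RR\times E)$ gives precompactness. A diagonal extraction over $\epsilon=1/m$ yields a subsequence $\{t_{n_k}\}$ along which $f(\cdot+t_{n_k},\cdot)$ is uniformly Cauchy, hence converges uniformly on $\RR\times E$.

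For item (2) I would show that $a_T(x):=\frac1T\int_0^T f(t,x)\,dt$ is uniformly Cauchy as $T\to\infty$. If $\sigma$ is an $\epsilon$-almost period then $\big|a_T(x)-\frac1T\int_\sigma^{\sigma+T}f(t,x)\,dt\big|\le\epsilon$, and for an arbitrary shift $s$, choosing $\sigma\in T(\epsilon)$ with $|s-\sigma|\le\ell(\epsilon)$ gives $\big|a_T(x)-\frac1T\int_s^{s+T}f(t,x)\,dt\big|\le\epsilon+2\ell(\epsilon)\norminfty{f}/T$. Applying this with $S$ and $T$ interchanged and combining through Fubini on $\frac1{ST}\int_0^S\int_0^T f(s+u,x)\,du\,ds$ (which, integrated in the two orders, is within $\epsilon+2\ell(\epsilon)\norminfty{f}/T$ of $a_S(x)$ and within $\epsilon+2\ell(\epsilon)\norminfty{f}/S$ of $a_T(x)$) gives $|a_T(x)-a_S(x)|\le 2\epsilon+2\ell(\epsilon)\norminfty{f}(1/S+1/T)$ for all $x$; since $\ell(\epsilon)$ and $\norminfty{f}$ do not depend on $x$, $\limsup_{S,T\to\infty}\sup_x|a_T(x)-a_S(x)|\le 2\epsilon$ for every $\epsilon$, so $a_T\to\hat f$ uniformly in $x$, and $\hat f$ is bounded and uniformly continuous as a uniform limit of the uniformly continuous $a_T$. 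When $f$ is in addition almost periodic in each $x_i$, the $a_T$ are almost periodic in each $x_i$ uniformly in $T$ (an $\epsilon$-almost period in $x_i$ for $f$ is one for $a_T$), so $\hat f$ is almost periodic in each $x_i$; the multivariate mean $\bar f$ then exists either by the same Cauchy argument carried out over boxes in $\RR^N$ (using relative density in $\RR^N$ of the $\epsilon$-almost periods of $\hat f$, valid since $\hat f$ is jointly uniformly continuous and separately almost periodic) or by iterating the one-dimensional result in $x_1,\dots,x_N$ successively. (The innermost integral in the displayed formula for $\bar f$ should read $\int_0^{q_1}\cdots dx_1$.)

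For item (3) — the classical Approximation Theorem of H.~Bohr — I would either cite a standard reference (Fink, Corduneanu, Levitan--Zhikov) or sketch the Bochner--Fej\'er construction: the Fourier--Bohr exponents are those $\lambda$ with $a(\lambda):=M_t\{f(t)e^{-i\lambda t}\}\ne0$, the mean existing by item (2) applied to the almost periodic function $t\mapsto f(t)e^{-i\lambda t}$; only countably many $\lambda$ occur (Bessel inequality). Choosing a finite rationally independent base $\beta_1,\dots,\beta_m$ and forming the nonnegative trigonometric polynomial kernel $K$ as a product of classical Fej\'er kernels in the variables $\beta_j t$ (mean $1$), the polynomial $P(t)=M_s\{f(s)K(t-s)\}$ has frequencies among rational combinations of $\beta_1,\dots,\beta_m$, and standard estimates give $\sup_t|f(t)-P(t)|\to0$ as the base is refined and the Fej\'er orders tend to infinity. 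I expect item (3) to be the one genuine obstacle if a self-contained treatment is wanted, since the Bochner--Fej\'er summation is the substantive part of the theory; items (1) and (2) are soft consequences of Bohr's definition, compactness, and Fubini, and a fully acceptable alternative is to quote all three from the literature and retain only what is used in later sections.
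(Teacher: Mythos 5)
Your proposal is correct, but it takes a genuinely different route from the paper: the paper does not prove any of the three items itself, it simply cites Fink's monograph on almost periodic differential equations (Theorems 2.7, 3.1 and 3.17 there) for (1), (2) and (3) respectively, exactly the ``quote all three from the literature'' fallback you mention at the end. Your sketches are the standard Bohr--Bochner arguments and are sound: the total-boundedness argument via almost periods and an inclusion length for (1) is Bochner's normality criterion; the Fubini/interval-shifting estimate $|a_T(x)-a_S(x)|\le 2\epsilon+2\ell(\epsilon)\norminfty{f}(1/S+1/T)$ for (2) is the classical proof of the uniform mean, and your observation that the uniformity in $x\in E$ comes for free because $\ell(\epsilon)$ and $\norminfty{f}$ are independent of $x$ is exactly the point needed for the paper's later use of $\hat a$; the Bochner--Fej\'er kernel construction for (3) is the substantive classical theorem, and citing it (as the paper does) is entirely reasonable. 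The only places where your sketch is thinner than a full proof are the multivariate mean in (2) --- passing from separate almost periodicity in each $x_i$ plus joint uniform continuity of $\hat f$ to the existence of the joint limit over boxes needs either the iterated-mean argument carried out with the uniformity you invoke, or the relative-density-in-$\RR^N$ fact, neither of which is difficult but both of which deserve a line --- and (3) itself if one insists on self-containment. What your approach buys is a self-contained treatment independent of the reference; what the paper's approach buys is brevity, since all three statements are textbook facts. Your remark that the innermost integral in the displayed formula for $\bar f$ should be $\int_0^{q_1}\cdots dx_1$ correctly identifies a typo in the statement.
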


\begin{proof}
(1) It follows from \cite[Theorem 2.7]{fink}

(2) It follows from \cite[Theorem 3.1]{fink}

(3) It  follows from \cite[Theorem 3.17]{fink}.
\end{proof}

Next, we introduce the concept of sub- and super-solutions of
\eqref{main-linear-eq} and   present some comparison principle for (\ref{main-linear-eq}).

Recall that
$$
X=X(D):= C_{\rm unif}^b(\bar D)=\{u\in C(\bar D)\,|\, u\,\,\, \text{is uniformly continuous and bounded}\}.
$$
For given $u^1,\; u^2 \in X,$ we define
$$u^1 \le u^2, ~~\text{if} ~~  u^1(x)\le u^2(x)\quad \forall\, x\in\bar D.$$

\begin{definition}
A continuous function $u(t,x)$ on $[0,\tau)\times\bar D$ is called a super-solution (or sub-solution) of  \eqref{main-linear-eq} if  $u(t,x)$ is differentiable in $t$ for a.e.  $t\in [0,\tau)$ and all $x\in\bar D$  and satisfies,
$$\frac{\partial u}{\partial t} \geq (or \leq)
\int_D \kappa(y-x)u(t,y)dy+a(t,x)u(t,x)\quad \text{for} \;\; a.e. \, t\in [0,\tau),\,\, {\rm all}\, \, x\in\bar D.
$$
\end{definition}

\begin{proposition}
\label{comparison-prop}
(Comparison Principle)
\begin{itemize}
    \item[(1)]  If $u^1(t,x)$ and $u^2(t,x)$ are bounded sub- and super-solutions of \eqref{main-linear-eq} on $[0, \tau)$ and $u^1(0,\cdot) \leq u^2(0,\cdot)$, then $u^1(t,\cdot) \leq u^2(t,\cdot)$ for $t \in [0,\tau)$.

    \item[(2)] For given $u_0 \in X$ with $0\le u_0$,  and $a^1,\:a^2 \in \mathcal{X}$,  if $a^1 \leq a^2$ then $u(t,\cdot ; s,u_0, a^1) \leq u(t,\cdot ; s,u_0, a^2)$, where $u(t,\cdot;s,u_0,a_i)$ is the solution of \eqref{main-linear-eq} with
        $a$ being replaced by $a_i$ and $u(s,\cdot;s,u_0,a_i)=u_0(\cdot)$ for $i=1,2$.
 \end{itemize}
\end{proposition}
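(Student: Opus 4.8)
The plan is to prove part (1) first by a standard argument showing that the nonnegativity of a nonlocal-parabolic-type solution is preserved, and then to derive part (2) as a straightforward consequence by comparing the two solutions through the monotonicity in the reaction coefficient. Set $w(t,x) = u^2(t,x) - u^1(t,x)$. From the definitions of super- and sub-solution, $w$ is continuous on $[0,\tau)\times\bar D$, differentiable in $t$ for a.e.\ $t$ and all $x$, $w(0,\cdot)\ge 0$, and
\[
\frac{\p w}{\p t} \ge \int_D \kappa(y-x)w(t,y)\,dy + a(t,x)w(t,x)\quad \text{for a.e.\ }t\in[0,\tau),\ \text{all }x\in\bar D.
\]
First I would pick any $T_0<\tau$, let $\alpha = 1 + \sup_{[0,T_0]\times\bar D}|a(t,x)|$ (finite by (H2)), and consider $v(t,x) = e^{\alpha t}w(t,x)$ on $[0,T_0]$. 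Then $v$ satisfies $\p_t v \ge \int_D\kappa(y-x)v(t,y)\,dy + (a(t,x)+\alpha)v(t,x)$ with $a+\alpha \ge 1 > 0$. The point of this shift is that if $v$ ever becomes negative, the ``zeroth-order'' term $(a+\alpha)v$ works in our favor.

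The core step is to show $v\ge 0$ on $[0,T_0]\times\bar D$. I would argue by contradiction: suppose $m := \inf_{[0,T_0]\times\bar D} v < 0$. Since $D$ may be unbounded, the infimum need not be attained, so I would use an approximation/penalization device: fix $\e>0$ small and choose $(t_\e, x_\e)$ with $v(t_\e,x_\e) < m + \e < 0$ and, among such points, with $t_\e$ close to minimal; more carefully, one looks at $\inf_x v(t,x)$ as a function of $t$, shows it is Lipschitz (or at least absolutely continuous) using the uniform bounds, and examines its behavior where it first dips below a negative level. At a near-minimizing time/point one has $\p_t v(t_\e,x_\e) \le \e$ (approximately), while $\int_D \kappa(y-x_\e) v(t_\e,y)\,dy \ge m$ (since $\kappa\ge 0$, $\int\kappa = 1$) and $(a(t_\e,x_\e)+\alpha)v(t_\e,x_\e) \le (a(t_\e,x_\e)+\alpha)(m+\e) < (m+\e)$ because $a+\alpha\ge 1$. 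Plugging into the differential inequality gives, up to $O(\e)$, $\e \ge m + (m+\e)$, i.e.\ roughly $0 \ge 2m$, contradicting $m<0$ once $\e$ is small enough. Hence $v\ge 0$, so $w\ge 0$ on $[0,T_0]$, and since $T_0<\tau$ was arbitrary, $u^1(t,\cdot)\le u^2(t,\cdot)$ on $[0,\tau)$.

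For part (2), let $u_i(t,x) = u(t,x;s,u_0,a_i)$. Since $u_0\ge 0$, $a_i$ is continuous and the equation is linear, $u_i(t,\cdot)\ge 0$ for all $t\ge s$ (this itself follows from part (1) applied with the sub-solution $0$ and super-solution $u_i$, using that $0$ is a genuine solution when the coefficient is $a_i$). Then, writing the equation for $u_2$ and using $a^2\ge a^1$ together with $u_2\ge 0$,
\[
\p_t u_2 = \int_D \kappa(y-x)u_2(t,y)\,dy + a^2(t,x)u_2 \ge \int_D \kappa(y-x)u_2(t,y)\,dy + a^1(t,x)u_2,
\]
so $u_2$ is a super-solution of \eqref{main-linear-eq} with coefficient $a^1$, while $u_1$ is the corresponding solution, and $u_1(s,\cdot)=u_0=u_2(s,\cdot)$. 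Applying part (1) (after translating the time origin to $s$) gives $u_1(t,\cdot)\le u_2(t,\cdot)$ for $t\ge s$, which is the claim.

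The main obstacle is the unboundedness of $D$ in the $D=\RR^N$ case: the infimum of $v$ need not be attained, so the naive ``evaluate at the minimum point'' argument fails and must be replaced by the near-minimizer/penalization analysis sketched above (or, alternatively, by an $L^\infty$ energy-type estimate: bounding $\sup_{t\le T}\big(\sup_x (-v)_+\big)$ by a Gronwall inequality, using $\|\kappa\|_{L^1}=1$ to control the nonlocal term). Everything else — the exponential shift, the reduction of (2) to (1), and the preservation of nonnegativity — is routine.
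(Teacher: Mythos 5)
Your high-level strategy for (1)---the exponential shift $v=e^{\alpha t}w$ to make the zeroth-order coefficient positive, then a contradiction from a supposed negative infimum---is the same as the paper's, and your reduction of (2) to (1) via the subsolution observation is exactly the paper's argument. But the core penalization step in (1) does not actually close.

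The ``contradiction'' you derive, $0\ge 2m$, is not a contradiction: $m<0$ gives $2m<0\le 0$, so $0\ge 2m$ is \emph{implied} by $m<0$. More fundamentally, the inequalities run the wrong way. To feed $\p_t v\ge\int_D\kappa(y-x)v(t,y)\,dy+(a+\alpha)v$ into an upper bound $\p_t v(t_\e,x_\e)\le\e$, you need a \emph{lower} bound for the right-hand side; but $(a(t_\e,x_\e)+\alpha)v(t_\e,x_\e)\le m+\e$ is an \emph{upper} bound, and the correct lower bound is $(a+\alpha)v\ge \bigl(\sup(a+\alpha)\bigr)m$, which can lie far below $m$. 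In the $\e\to 0$ limit you obtain only $0\ge m\bigl(1+\sup(a+\alpha)\bigr)$, consistent with $m<0$. The bound $\p_t v(t_\e,x_\e)\le\e$ at an approximate minimizer is also unjustified: an approximate minimizer of $v$ over $[0,T_0]\times\bar D$ can have arbitrarily large $|\p_t v|$, and the ``first time $\inf_x v$ dips below a level'' variant runs into the same sign issue in the reaction term. This is exactly why the paper does \emph{not} argue pointwise: it integrates the inequality from $0$ to $t_n$ on a short time interval $[0,T_0]$ with $T_0<1/(p_0+1)$, where $p_0=\sup(a+c)$, uses the anchor $v(0,x_n)\ge 0$, and arrives at $v_{\inf}\ge t_0(1+p_0)v_{\inf}>v_{\inf}$, a genuine strict contradiction because $0<t_0(1+p_0)<1$ and $v_{\inf}<0$; it then bootstraps over consecutive intervals of length $T_0$. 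Your alternative Gronwall suggestion, by contrast, does work: setting $h(t)=\sup_x(-v(t,x))_+$ and using the integral form together with $\int_D\kappa(y-x)\,dy\le 1$ yields $h(t)\le\bigl(1+\sup(a+\alpha)\bigr)\int_0^t h(\tau)\,d\tau$ with $h(0)=0$, whence $h\equiv 0$. If you want to stay with a contradiction argument, replace the pointwise derivative comparison by the paper's integrated short-interval estimate.
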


\begin{proof}
(1) This follows from the arguments of Proposition 2.1 of \cite{ShZh1}. For the sake of completeness, we provide a proof.

 Set $v(t,x) = e^{ct}(u^2(t,x) - u^1(t,x))$. Then $v(t,x)$ satisfies
\begin{equation}\label{veqn}
\frac{\partial v}{\partial t} \ge \int_D \kappa(y-x)v(t,y)dy+p(t,x)v(t,x) ~~\text{for}~~
a.e. \;\;t\in [0,\tau) \;\; and ~~all ~~x\in \bar{D},
\end{equation}
where $p(t,x)=a(t,x) + c$.  Take $c>0$ such that $p(t,x) >0$ for all $t \in \RR$ and  $x \in D$.
Let $p_0 = \underset{t\in \RR,  x\in D}{\sup}p(t,x)$ and $T_0 = \min\{\tau, \frac{1}{p_0 + 1}\}.$

  Assume that there exist  $\Bar{t}\in (0,T_0) $ and $\bar{x} \in D$ such that $v(\Bar{t}, \Bar{x}) < 0.$ Then there exists $t_0 \in (0, T_0)$ such that $v_{inf} := \underset{(t,x) \in [0,t_0) \times D}{\inf}v(t,x)<0$. We can  then find $t_n \in [0,t_0), \; x_n \in D$ such that  $v(t_n, x_n) \to v_{inf}$ as $n \to \infty.$ By (\ref{veqn}), we have
$$
v(t_n, x_n) - v(0, x_n)  \ge  \int_0^{t_{n}}[\int_D \kappa(y-x_n)v(t,y)dy+p(t,x_n)v(t,x_n)]dt.
$$
By $v(0, x_n) \ge 0$,  we have
$$ v(t_n, x_n) \ge  \int_0^{t_n}[\int_D \kappa(y-x)v_{inf}dy+ p_0v_{inf}]dt + v(0, x_n)
\ge t_n(1+p_0)v_{inf}.$$
This implies that
$$v_{inf} \ge t_0(1+p_0)v_{inf} > v_{inf},
 $$
 which is a contradiction. Hence $v(t,x) \ge 0$ for all $t \in [0, T_0)$ and for all $x \in D.$

 Let $k\ge 1$ be such that $kT_0\le \tau$ and $(k+1)T_0>\tau$.
 Repeat the above arguments, we have
 $$
 v(t,x)\ge 0\quad \forall\, t\in [(i-1)T_0, iT_0),\,\, x\in \bar D,\,\, i=1,2, \cdots k,
 $$
 and
 $$
 v(t,x)\ge 0\quad \forall \, t\in [kT_0, \tau),\,\, x\in \bar D.
 $$
 It then follows that
 $$
 v(t,x)\ge 0\quad \forall\, t\in[0,\tau),\,\, x\in\bar D.
 $$
This implies that $u^1(t,x) \le u^2(t,x)$ for all $t \in [0,\tau), \; x \in D.$

(2) By (1),
$$
u(t,x;s,u_0,a^i)\ge 0\quad \forall\, t\ge 0,\,\, x\in\bar D,\,\,i=1,2.
$$
This together with $a^1\le a^2$ implies that
$$
u_t(t,x;s,u_0,a^1)\le \int_D \kappa(y-x)u(t,y;s,u_0,a^1)dy+a^2(t,x)u(t,x;s,u_0,a^1)\quad \forall\, t\ge 0,\,\, x\in\bar D.
$$
Then by (1) again,
$$
u(t,x;s,u_0,a^1)\le u(t,x;s,u_0,a^2)\quad \forall\, t\ge 0,\,\, x\in\bar D.
$$
The proposition is thus proved.
\end{proof}

Finally, we  recall some existing results on the principal eigenvalue theory for \eqref{main-linear-eq} when
$D$  is bounded and $a(t,x)$ is $T$-periodic in $t$ (i.e. $a(t+T,x)=a(t,x)$)
 or $D=\RR^N$ and $a(t,x)$ is $T$-periodic in $t$ and $P$-periodic in $x$, where $P=(p_1,p_2,\cdots,p_N)$ and $p_i\ge 0$ for $i=1,2,\cdots,N$) (i.e. $a(t+T,x)=a(t,x+p_i{\bf e_i})=a(t,x)$
 for $i=1,2,\cdots,N$).

Let
$$
X_P=\begin{cases}
X\quad {\rm if}\quad D \quad {\rm is \,\, bounded}\cr
\{u\in X\,|\, u\,\, \text{is}\,\, P{\rm -periodic\,\,  in} \,\, x\}\quad {\rm if}\quad
D=\RR^N.
\end{cases}
$$
Let
$$
\mathcal{X}_P=\begin{cases}
\{u\in \mathcal{X}\,|\, u \,\, {\rm is}\,\, T{\rm -periodic\,\, in}\,\, t\}\quad {\rm if}\quad D \quad {\rm is \,\, bounded}\cr
\{u\in \mathcal{X}\,|\, u\,\, \text{is}\,\, {\rm is}\,\, T{\rm -periodic\,\, in}\,\, t\,\, {\rm and}\,\, P{\rm -periodic\,\,  in} \,\, x\}\quad {\rm if}\quad
D=\RR^N.
\end{cases}
$$
For given $a\in\mathcal{X}_p$,
define  ${L}_p(a):\mathcal{D}({L}_p(a))\subset \mathcal{X}_P\to \mathcal{X}_P$  by
$$
{L}_p(a) u=-u_t+\int_{D}\kappa(y-x)u(t,y)dy+a(t,x)u.
$$

\begin{definition}
\label{principal-eigenvalue-def}
For given $a\in\mathcal{X}_p$, let
$$
\lambda_s(a)=\sup\{{\rm Re}\lambda\,|\, \lambda\in\sigma({L}_p(a))\},
$$
where  $\sigma({L}_p(a))$ is the spectrum of ${L}_p(a)$.
$\lambda_s(a)$ is called the {\rm principal spectrum
point} of $L_p(a)$. If $\lambda_s(a)$ is an isolated
eigenvalue of $L_p(a)$ with a positive eigenfunction $\phi$ (i.e.
$\phi\in \mathcal{X}_p$ with $\phi(t,x)>0$), then $\lambda_s(a)$ is called the {\rm
principal eigenvalue} of $L_p(a)$ or it is said that {\rm
$L_p(a)$ has a principal eigenvalue}.
\end{definition}

\begin{proposition}
\label{periodic-prop}
For given $a\in\mathcal{X}_p$, the following hold.
\begin{itemize}
\item[(1)] $\lambda_s(a) = \lambda_{PL}(a)$.

\item[(2)]The  principal eigenvalue of ${L}_p(a)$ exists
if $\hat a(\cdot)$ is $C^N$, there is some $x_0\in {\rm Int}(D)$
 satisfying $\hat a(x_0)=\max_{x\in \bar D}\hat a(x)$,
and  the partial derivatives of $\hat a(x)$ up to order
$N-1$ at $x_0$ are zero.

\item[(3)] For any
$\epsilon>0$, there is $a_{\epsilon}\in\mathcal{X}_P$ satisfying that
$$
\|a-a_{\epsilon}\|_{\mathcal{X}}<\epsilon;
$$
$\hat a_{\epsilon}$ is $C^N$;  $\hat a_{\epsilon}$
attains its maximum at some point $x_0\in {\rm Int}(D)$;  and the
partial derivatives of $\hat a_{\epsilon}$ up to order $N-1$
at $x_0$ are zero, where $\hat a_{\epsilon}(x)=\frac{1}{T}\int_0^T
a_{\epsilon}(t,x)dt$.

\item[(4)] $\lambda_s(a) \ge \lambda_s(\hat{a})  \ge \underset{x\in D}{\sup}\;\hat{a}(x).$
\end{itemize}
\end{proposition}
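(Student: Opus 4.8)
The plan is to derive all four items from the already-established time-periodic principal eigenvalue theory for nonlocal dispersal operators \cite{HuShVi, RaSh, ShZh1}, filling in the short self-contained arguments where that is cleaner than a citation. Throughout I would use the periodicity identity $\Phi(t+T,s+T;a)=\Phi(t,s;a)$, immediate from uniqueness of solutions and the $T$-periodicity of $a$ (and, when $D=\RR^N$, the $P$-periodicity in $x$, so that $\Phi(t,s;a)$ leaves $X_P$ invariant).

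For item (1) I would realize $L_p(a)$ as the generator of the evolution semigroup $\{S_\tau\}_{\tau\ge0}$ on $\mathcal{X}_P$ given by $(S_\tau u)(t,\cdot)=\Phi(t,t-\tau;a)u(t-\tau,\cdot)$, which is strongly continuous exactly because of the periodicity above and positive by Proposition \ref{comparison-prop}. Its growth bound is $\limsup_{\tau\to\infty}\tau^{-1}\ln\|S_\tau\|=\lambda_{PL}(a)$; writing $\tau=kT+r$ and factoring $\Phi(t,t-\tau;a)$ through powers of the monodromy operator $\Phi(T,0;a)$ shows this $\limsup$ is actually a limit, equal to $T^{-1}$ times the logarithm of the spectral radius of $\Phi(T,0;a)$. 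Since $s(L_p(a))=\lambda_s(a)$ by definition, item (1) amounts to the equality of the spectral bound and the growth bound of this positive semigroup --- whose convolution part is a compact perturbation of the decoupled piece $-\p_t+a$, which pins down the top of the spectrum --- and this is precisely the content of the relevant statements in \cite{HuShVi, RaSh, ShZh1}.

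Item (2) I would take from \cite{RaSh, ShZh1}: the flatness hypothesis forces $\hat a(x)-\hat a(x_0)=O(|x-x_0|^N)$ near the interior maximizer $x_0$, hence the divergence of integrals of the type $\int_D(\lambda-\hat a(x))^{-1}\,dx$ as $\lambda\downarrow\lambda_s(a)$, which is the classical criterion making $\lambda_s(a)$ an isolated eigenvalue of $L_p(a)$ with a strictly positive eigenfunction. For item (3), given $\epsilon>0$ I would put $a_\epsilon(t,x)=a(t,x)+b_\epsilon(x)$ with $b_\epsilon$ time-independent and small, chosen so that $\hat a+b_\epsilon$ first agrees with a mollification $\rho_\delta*\hat a$ of $\hat a$ (a change of size $o(1)$ as $\delta\to0$, since $\hat a$ is uniformly continuous, and it renders the time-average $C^\infty$) and is then further corrected by a small smooth bump flattening it to a constant on a ball $B(x_0,r)$ around an interior near-maximizer; on $B(x_0,r/2)$ all derivatives vanish and the maximum is attained, and choosing $\delta,r$ small keeps $\|a-a_\epsilon\|_{\mathcal{X}}<\epsilon$.

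For item (4) I would first use (1) to replace $\lambda_s$ by $\lambda_{PL}$. The inequality $\lambda_{PL}(a)\ge\lambda_{PL}(\hat a)$ I would get by convexity: a Feynman--Kac type representation $u(t,x;0,u_0,a)=\mathbb{E}_x[\exp(\int_0^t a(t-s,Y_s)\,ds)\,u_0(Y_t)]$ for the (possibly killed) jump process $Y$ with kernel $\kappa$, together with positivity so that $\|\Phi(t,0;a)\|=\sup_{x}u(t,x;0,1,a)$, makes $a\mapsto\ln\|\Phi(t,0;a)\|$ convex for each $t$ by H\"older's inequality, hence $a\mapsto\lambda_{PL}(a)$ convex (and Lipschitz) on $\mathcal{X}_P$; since $\hat a(x)=\frac1T\int_0^T a(\cdot+\theta,\cdot)\,d\theta$ is an average of time-translates of $a$, all with the same $\lambda_{PL}$ (because $\Phi(t,s;a(\cdot+\theta,\cdot))=\Phi(t+\theta,s+\theta;a)$), Jensen's inequality gives $\lambda_{PL}(\hat a)\le\lambda_{PL}(a)$. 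For $\lambda_{PL}(\hat a)\ge\sup_{x\in D}\hat a(x)$, pick $x_n\in D$ with $\hat a(x_n)\to\sup_{x\in D}\hat a(x)$ and nonnegative $u_0^n\in X_P$ peaked near $x_n$; since $\int_D\kappa(y-x)u(t,y)\,dy\ge0$ along the solution, $\p_t u(t,x_n)\ge\hat a(x_n)u(t,x_n)$, so $\|\Phi(t,0;\hat a)\|\ge e^{\hat a(x_n)t}$, and $t\to\infty$ then $n\to\infty$ finishes. I expect the main obstacle to be item (1): the bound $s(L_p(a))\le$ growth bound is automatic, but the reverse equality --- that the spectral bound is attained and equals the exponential growth rate --- requires exploiting positivity and the smoothing contribution of the convolution term to locate the top of the essential spectrum, a Perron--Frobenius/Krein--Rutman-type analysis that I would lean on \cite{HuShVi, RaSh, ShZh1} for rather than reprove; in a fully self-contained route the time-averaging inequality in (4) is the other delicate point, since the naive conjugation $w=e^{-c}u$ with $c(t,x)=\int_0^t(a(s,x)-\hat a(x))\,ds$ fails to close.
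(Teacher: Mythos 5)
Your proposal is correct. For items (1)--(3) you arrive at essentially the paper's position: the paper establishes (1), (2), (3) by direct citation to \cite[Theorem 3.2]{HuShVi}, \cite[Theorem B(1)]{RaSh}, and \cite[Lemma 4.1]{RaSh} respectively, and your sketches of the evolution-semigroup realization of $L_p(a)$ and the mollify-then-flatten approximation are the expected scaffolding behind those references. The genuine divergence is item (4). The paper cites \cite[Theorem C]{RaSh}, and the machinery behind that result --- inferable from the paper's own Section~6 proof of the almost-periodic inequality $\lambda_{PL}(a)\ge\lambda_{PL}(\hat a)$ in Theorem \ref{variation-thm2}(1) --- is the quotient-averaging Lemma \ref{lyapunov-exp-lm2}: one compares $a$ with a nearby time-independent $a^*$ admitting a principal eigenfunction $\phi^*$, expands $\lambda_{PL}(a^*)-\lambda_{PL}(a)$ as a sum of three time averages involving $w(t,x)=v(t,x)/\phi^*(x)$, and locates $x^*$ at which $1-\frac1T\int_0^T \frac{w(t,y)}{w(t,x^*)}\,dt\le 0$ for all $y$. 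Your route is different and conceptually cleaner: decomposing $\int_D\kappa(y-x)u(y)\,dy=\int_D\kappa(y-x)(u(y)-u(x))\,dy+K(x)u(x)$ with $K(x)=\int_D\kappa(y-x)\,dy$, the solution has the representation $u(t,x;a)=\mathbb{E}_x\bigl[\exp\bigl(\int_0^t (K+a)(t-s,Y_s)\,ds\bigr)u_0(Y_t)\bigr]$ for the jump process $Y$ generated by the conservative part; since a positive operator attains its norm on $u_0\equiv1$, $\ln\|\Phi(t,0;\cdot)\|$ is a supremum of log-moment functionals and hence convex in $a$ by H\"older, so $\lambda_{PL}$ is convex (and Lipschitz, by Lemma \ref{continuity-lem1}), and Jensen applied to the Bochner average $\hat a=\frac1T\int_0^T\sigma_\theta a\,d\theta$ together with $\lambda_{PL}(\sigma_\theta a)=\lambda_{PL}(a)$ gives $\lambda_{PL}(\hat a)\le\lambda_{PL}(a)$. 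This buys transparency --- it identifies the time-averaging inequality as a pure convexity phenomenon, dispensing with the bespoke averaging lemma --- at the cost of constructing the stochastic representation. Two small corrections for a full write-up: your Feynman--Kac integrand must carry the $a$-independent correction $K(Y_s)$ coming from the non-conservative convolution (you allude to a killed process, but the displayed formula omits it); and in item (3), mollifying $\hat a$ near $\partial D$ when $D$ is bounded presupposes an extension of $\hat a$ to a neighborhood of $\bar D$, a routine but omitted step. Also, for $\lambda_{PL}(\hat a)\ge\sup_{x\in D}\hat a(x)$ the peaked initial data $u_0^n$ are unnecessary: $u_0\equiv1$ already gives $u(t,x)\ge e^{\hat a(x)t}$ pointwise.
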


\begin{proof}
(1) It follows from \cite[Theorem 3.2]{HuShVi}.

(2) It follows from \cite[Theorem B(1)]{RaSh}.

 (3) It follows from   \cite[Lemma 4.1]{RaSh}.

 (4) It follows from  \cite[Theorem C]{RaSh}.
  \end{proof}


\section{Proof of Theorem \ref{relation-thm1}}

In this section, we
prove Theorem \ref{relation-thm1}. We first prove a lemma on the continuity of $\lambda_{PL}(a)$, $\lambda_{PL}^{'}(a)$,
$\lambda_{PE}(a)$, and
$\lambda_{PE}^{'}(a)$ in $a$.

\begin{lemma}
\label{continuity-lem1}
$\lambda_{PL}(a)$, $\lambda_{PL}^{'}(a)$,  $\lambda_{PE}(a)$, and $\lambda_{PE}^{'}(a)$ are continuous in $a\in\mathcal{X}$ satisfying (H2).
\end{lemma}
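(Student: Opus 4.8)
The plan is to prove the stronger quantitative statement that each of $\lambda_{PL}$, $\lambda_{PL}^{'}$, $\lambda_{PE}$, $\lambda_{PE}^{'}$ is Lipschitz in $a$ with Lipschitz constant $1$ relative to the $\mathcal X$-norm; continuity then follows immediately. Fix $a,b\in\mathcal X$ satisfying {\bf (H2)} and set $\delta=\|a-b\|_{\mathcal X}$, so $b-\delta\le a\le b+\delta$ on $\RR\times\bar D$. Two elementary facts will be used throughout. First, for any constant $c$, $v(t,\cdot):=e^{c(t-s)}u(t,\cdot;s,u_0,a)$ solves \eqref{main-linear-eq} with $a$ replaced by $a+c$, whence $\Phi(t,s;a+c)=e^{c(t-s)}\Phi(t,s;a)$. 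Second, by Proposition \ref{comparison-prop} (comparison with the zero solution) $\Phi(t,s;a)$ maps nonnegative functions to nonnegative functions; since it is also linear, for $\|u_0\|\le 1$ we have $-\mathbf 1\le u_0\le\mathbf 1$ and therefore $|\Phi(t,s;a)u_0|\le\Phi(t,s;a)\mathbf 1$ pointwise, so that $\|\Phi(t,s;a)\|=\|\Phi(t,s;a)\mathbf 1\|$, where $\mathbf 1\in X$ denotes the constant function $1$ (which belongs to $X$ also when $D=\RR^N$).

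For $\lambda_{PL}$ and $\lambda_{PL}^{'}$: applying Proposition \ref{comparison-prop}(2) with $u_0=\mathbf 1\ge 0$ and using $a\le b+\delta$ together with the scaling identity gives $0\le\Phi(t,s;a)\mathbf 1\le\Phi(t,s;b+\delta)\mathbf 1=e^{\delta(t-s)}\Phi(t,s;b)\mathbf 1$; taking sup norms and using the second fact yields $\|\Phi(t,s;a)\|\le e^{\delta(t-s)}\|\Phi(t,s;b)\|$ for all $s\le t$, and symmetrically with $a$ and $b$ interchanged. Taking logarithms, dividing by $t-s>0$, and passing to $\limsup_{t-s\to\infty}$ (resp.\ $\liminf_{t-s\to\infty}$) gives $|\lambda_{PL}(a)-\lambda_{PL}(b)|\le\delta$ (resp.\ $|\lambda_{PL}^{'}(a)-\lambda_{PL}^{'}(b)|\le\delta$).

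For $\lambda_{PE}$ and $\lambda_{PE}^{'}$ I would argue directly from the defining sets. If $\lambda\in\Lambda_{PE}(a)$ with test function $\phi\in\mathcal X$, $\inf_{t\in\RR}\phi(t,x)\ge\not\equiv 0$, then, since $L(b)\phi=L(a)\phi+(b-a)\phi$, $\phi\ge 0$, and $b-a\ge-\delta$, we obtain $(L(b)\phi)(t,x)\ge(L(a)\phi)(t,x)-\delta\phi(t,x)\ge(\lambda-\delta)\phi(t,x)$ for a.e.\ $t\in\RR$ and all $x\in\bar D$; hence $\lambda-\delta\in\Lambda_{PE}(b)$. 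Taking suprema and using symmetry gives $|\lambda_{PE}(a)-\lambda_{PE}(b)|\le\delta$. The identical computation with the inequalities reversed, using that a test function for $\Lambda_{PE}^{'}$ satisfies $\inf_{t,x}\phi>0$ (in particular $\phi\ge 0$), shows $\lambda+\delta\in\Lambda_{PE}^{'}(b)$ whenever $\lambda\in\Lambda_{PE}^{'}(a)$, hence $|\lambda_{PE}^{'}(a)-\lambda_{PE}^{'}(b)|\le\delta$.

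The one point needing separate attention is that all four quantities are finite, so that the Lipschitz bounds really do give continuity. This follows from comparison with spatially constant exponential sub/super-solutions: using $\int_D\kappa(y-x)\,dy\le 1$ one gets $e^{(\inf a)(t-s)}\le\|\Phi(t,s;a)\|\le e^{(\sup a+1)(t-s)}$, so $\lambda_{PL}^{'}(a),\lambda_{PL}(a)\in[\inf a,\sup a+1]$; and a parallel comparison argument (a test function $\phi$ for $\Lambda_{PE}(a)$ is a bounded nonnegative subsolution of \eqref{main-linear-eq} with $a$ replaced by $a-\lambda$, which on letting $s\to-\infty$ forces $\lambda\le\sup a+1$, and dually for $\Lambda_{PE}^{'}(a)$) confines $\lambda_{PE}(a)$ and $\lambda_{PE}^{'}(a)$ to $[\inf a,\sup a+1]$ as well. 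I expect this finiteness bookkeeping, together with the routine check that the operator norm of the positivity-preserving map $\Phi(t,s;a)$ is controlled by its action on $\mathbf 1$, to be the only mildly delicate part; once these are in place the Lipschitz estimates are immediate from the comparison principle and the scaling identity $\Phi(t,s;a+c)=e^{c(t-s)}\Phi(t,s;a)$.
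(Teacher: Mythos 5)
Your proof is correct and follows essentially the same route as the paper's: Lipschitz-$1$ estimates obtained by comparing $a$ with $b\pm\|a-b\|$, using Proposition \ref{comparison-prop} together with the scaling identity $\Phi(t,s;a+c)=e^{c(t-s)}\Phi(t,s;a)$ for the Lyapunov exponents, and a direct shift of admissible $\lambda$'s in the defining sets $\Lambda_{PE}$, $\Lambda_{PE}^{'}$ for the generalized principal eigenvalues. You add two worthwhile details the paper leaves implicit — the reduction $\|\Phi(t,s;a)\|=\|\Phi(t,s;a)\mathbf 1\|$ via positivity, and the finiteness of all four quantities via constant exponential sub/super-solutions — but the core mechanism is identical.
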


\begin{proof}
First,
 we prove the continuity of $\lambda_{PL}(a)$ and $\lambda_{PL}^{'}(a)$ in $a$. For any $a_1,a_2\in\mathcal{X}$ satisfying (H2),
$$
a_2(t,x)-\|a_2-a_1\|\le a_1(t,x)\le a_2(t,x)+\|a_2-a_1\|\quad \forall\, t\in\RR,\, x\in\bar D.
$$
This implies that  for any $u_0\in X$ with $u_0\ge 0$,  using Proposition \ref{comparison-prop} we have,
$$
e^{-\|a_2-a_1\| (t-s)}\Phi(t,s;a_2)u_0\le \Phi(t,s;a_1)u_0\le e^{\|a_2-a_1\|(t-s)}\Phi(t,s;a_2)u_0.
$$
It then follows that
$$
-\|a_2-a_1\|+\lambda_{PL}(a_2)\le\lambda_{PL}(a_1)\le \|a_2-a_1\|+\lambda_{PL}(a_2),
$$
and
$$
-\|a_2-a_1\|+\lambda_{PL}^{'}(a_2)\le\lambda_{PL}^{'}(a_1)\le \|a_2-a_1\|+\lambda_{PL}^{'}(a_2).
$$
Hence $\lambda_{PL}(a)$ and $\lambda_{PL}^{'}(a)$ are continuous in $a$.

Next, we prove that $\lambda_{PE}(a)$ is continuous in $a$. For any $a_1,a_2\in\mathcal{X}$ and any $\lambda\in \Lambda_{PE}(a_1)$,
it is clear that $\lambda-\|a_2-a_1\|\in\Lambda_{PE}(a_2)$.
Hence
$$
\lambda_{PE}(a_2)\ge \lambda_{PE}(a_1)-\|a_2-a_1\|.
$$
Conversely, for any $\lambda\in\Lambda_{PE}(a_2)$, $\lambda-\|a_2-a_1\|\in\Lambda_{PE}(a_1)$. Hence
$$
\lambda_{PE}(a_1)\ge \lambda_{PE}(a_2)-\|a_2-a_1\|.
$$
Therefore,
$$
-\|a_2-a_1\|+\lambda_{PE}(a_2)\le \lambda_{PE}(a_1)\le \|a_2-a_1\|+\lambda_{PE}(a_2)
$$
and $\lambda_{PE}(a)$ is continuous in $a$.

Similarly, it can be proved that $\lambda_{PE}^{'}(a)$ is continuous in $a$.
\end{proof}

\begin{proof} [Proof of Theorem \ref{relation-thm1}]
(1) First, we introduce  the hull $H(a)$ of $a$,
$$
H(a)={\rm cl}\{\sigma_t a(\cdot,\cdot):=a(t+\cdot,\cdot)\,|\, t\in\RR\}
$$
with the open compact topology, where the closure is taken under the open compact topology. Note that, by the almost periodicity of $a(t,x)$ in $t$ uniformly with respect to $x\in\bar D$ (see {\bf (H2)}) and Proposition \ref{almost-periodic-prop}(1),
for any sequence $\{t_n\}\subset \RR$, there is a subsequence $\{t_{n_k}\}$ such that
the limit $\lim_{n_k\to\infty} a(t_{n_k}+t,x)$ exists uniformly in $(t,x)\in\RR\times\bar D$. Hence
 the open compact topology of $H(a)$ is equivalent to the topology of uniform convergence.
Let
\begin{equation}
\label{Phi-t-b-eq}
\Phi(t,b)u_0=u(t,\cdot;b,u_0),
\end{equation}
where $u(t,\cdot;b,u_0)$ is the solution of
\eqref{main-linear-eq}  with $a$ being replaced by $b\in H(a)$  and
$u(0,\cdot;b,u_0)=u_0(\cdot)\in X$.

Note that $(H(a),\sigma_t)$ is a compact minimal flow and  $\nu$ is the unique invariant ergodic measure of  $(H(a),  \sigma_{\tau})$, where   $\nu$ is the Haar measure of $H(a)$.
 It is clear that the map $[0,\infty)\ni t \mapsto \ln\|\Phi(t,b)\|$  is subadditive. By the subadditive ergodic theorem, there are $\lambda_0(a)\in\RR$ and  $H_0(a)\subset H(a)$ with $\nu(H_0(a))=1$ such that $\sigma_t(H_0(a))=H_0(a)$ for any $t\in\RR$ and
\begin{equation}
\label{lyapunov-exp-proof-eq1}
\lim_{t\to\infty}\frac{1}{t}\ln\|\Phi(t,b)\|=\lambda_0(a)
\end{equation}
for any $b\in H_0(a)$.

Next, we prove that  \eqref{lyapunov-exp-proof-eq1} holds for any $b\in H(a)$ and the limit is uniform in $b\in H(a)$.
Assume that this does not hold. Then there are $\epsilon_0>0$, $t_n\to \infty$, and $b_n\in H(a)$ such that
\begin{equation}
\label{assumption-eq1}
|\frac{1}{t_n}\ln \|\Phi(t_n,b_n)\| - \lambda_0(a)|\ge \epsilon_0.
\end{equation}
By the compactness of $H(a)$, there is $b^*\in H(a)$ and a subsequence of $b_n$,  which, without loss of generality, we still denote  as $b_n$,  such that
$$
b_n(t,x)\to b^*(t,x)\quad {\rm as}\quad n\to\infty
$$
uniformly in $t\in\RR$ and $x\in \bar D$. Then
$$
|b^*(t,x)- b_n(t,x)|\le \frac{\epsilon_0}{4}\quad \forall\,\, t\in\RR,\,\, x\in D,\,\, n\gg 1.
$$
Note that $H_0(a)$ is dense in $H(a)$. Therefore there is
$b^{**}\in H_0(a_1)$ such that
\begin{align*}
 |b^{**}(t,x)- b^*(t,x) | \le \frac{\epsilon_0}{4} \quad \forall\, t\in\RR,\,\, x\in D,\,\, n\gg 1.
 \end{align*}
 This implies that
 \begin{align*}
 |b^{**}(t,x)-b_n(t,x)|\le \frac{\epsilon_0}{2}\quad \forall\,\, t\in\RR,\,\, x\in D,\,\,  n\gg 1.
\end{align*}
Then by the comparison principle (see Proposition \ref{comparison-prop}),  we have
\begin{align*}
e^{-\frac{\epsilon_0}{2}t}\Phi(t,b_n)u_0=\Phi(t, b_n - \frac{\epsilon_0}{2})u_0&\le \Phi(t, b^{**})u_0 \\
&\leq \Phi(t, b_n + \frac{\epsilon_0}{2})u_0 =e^{\frac{\epsilon_0}{2}t}\Phi(t,b_n)u_0
\end{align*}
for any $u_0\in X$ with $u_0(x)\ge 0$.
This implies that
\begin{equation}
\label{assumption-eq2}
-\frac{\epsilon_0}{2} t+\ln\|\Phi(t,b_n)\|\le \ln\|\Phi(t,b^{**})\|\le \frac{\epsilon_0}{2} t+\ln\|\Phi(t,b_n)\|\quad \forall\, t\ge 0,\,\, n\gg 1.
\end{equation}
By \eqref{assumption-eq1}  and \eqref{assumption-eq2}, we have
$$
|\lim_{t\to\infty}\frac{1}{t}\ln\|\Phi(t,b^{**})\|- \lambda_0(a)|\ge \frac{\epsilon_0}{2}.
$$
This is a contradiction. Hence \eqref{lyapunov-exp-proof-eq1} holds for any $b\in H(a)$ and the limit is achieved uniformly in $b\in H(a)$.

Now we prove that $\lambda_{PL}(a)=\lambda_{PL}^{'}(a)=\lambda_0(a)$. By the definition of $\Phi(t,s;a)$ (see \eqref{Phi-t-s-eq}) and
$\Phi(t;b)$ (see \eqref{Phi-t-b-eq}), we have
$$
\Phi(t,s;a)=\Phi(t-s;\sigma_s a).
$$
Then, by the above arguments, we have
$$
\lim_{t-s\to\infty}\frac{\ln\|\Phi(t,s;a)\|}{t-s}=\lim_{t-s\to\infty} \frac{\ln\|\Phi(t-s;\sigma_s a)\|}{t-s}.
$$
Hence $\lambda_{PL}(a)=\lambda_{PL}^{'}(a)=\lambda_0(a)$. Moreover, we have
$$
\lambda_{PL}(a)=\lambda_{PL}^{'}(a)=\lim_{t-s\to\infty}\frac{\ln\|\Phi(t,s;a)\|}{t-s}=\lim_{t-s\to\infty}\frac{\ln\|\Phi(t,s;a)u_0\|}{t-s}
$$
for any $u_0\in X$ with $\inf_{x\in D}u_0(x)>0$. This proves (1).

(2)  First, observe  that $ \Phi(t,s;a)$ is exponentially bounded from above as well as from below. That is,  there exist $M,m > 0$ and
$\omega_\pm\in\RR$ such that
$$ me^{\omega_-(t-s)}\le \|\Phi(t,s;a)\| \leq Me^{\omega_+(t-s)}.$$
In fact, let
$$
\mathcal{K}:X \to X, \quad (\mathcal{K}u)(x)=\int_D\kappa(y-x)u(y)dy\quad \forall\, x\in\bar D
$$
and
$$
a_{\min}=\inf_{t\in\RR,x\in\bar D}a(t,x),\,\, a_{\max}=\sup_{t\in\RR,x\in\bar D}a(t,x).
$$
Then we have
$$
e^{a_{\min}(t-s)}e^{\mathcal{K}(t-s)}u_0\le \Phi(t,s)u_0\le e^{a_{\max}(t-s)}e^{\mathcal{K}(t-s)}u_0
$$
for all $t\ge s$ and $u_0\in X$ with $u_0\ge 0$.
Note that
$$
u_0\le e^{\mathcal{K}(t-s)}u_0\le e^{\|\mathcal{K}\|(t-s)}\|u_0\|
$$
for any $t\ge s$ and $u_0\in X$ with $u_0\ge 0$. It then follows that
$$
e^{a_{\min}(t-s)}\le \|\Phi(t,s;a)\|\le e^{(a_{\max}+\|\mathcal{K}\|)(t-s)}\quad \forall\, t\ge s.
$$
Therefore $ \Phi(t,s;a)$ is exponentially bounded from above as well as from below.

Next, we prove that $\lambda_{PL}(a)\le \lambda_{PD}(a)$.
To this end, for any given $\epsilon > 0$, let $\lambda_* = \lambda_{PD} (a)+ \epsilon$.  Then we can find $M > 0$ such that;
$$\|\Phi_{\lambda_*}(t,s;a)\| = \|e^{-\lambda_*(t-s)}\Phi(t,s;a)\| \leq M\quad \forall \, t\ge s.$$
That is
$$\|\Phi(t,s;a)\| \leq Me^{\lambda_*(t-s)}\quad \forall\, t \geq s.$$
It then follows that,
$$\limsup_{t-s\to\infty}\disp\frac{\ln \|\Phi(t,s;a)\|}{t-s} \leq \lambda_*,$$
which implies $\lambda_{PL}(a) \leq \lambda_{PD}(a) + \epsilon.$
Letting $\epsilon\to 0$.
we conclude that $\lambda_{PL}(a) \leq \lambda_{PD}(a).$

Now, we prove that $\lambda_{PD}(a)\le \lambda_{PL}(a)$. To this end, for any $\epsilon > 0$, let
 $\bar\lambda = \lambda_{PL}(a) + \epsilon$. We have
$$ \|\Phi_{\bar\lambda}(t,s;a)\| = e^{-(\lambda_{PL}(a) + \epsilon)(t-s)}\|\Phi(t,s;a)\| \rightarrow 0$$
 as $t-s \rightarrow \infty.$
This implies that $\Phi_{\lambda_{PL} (a)+ \epsilon}(t,s;a)$ admits an exponential dichotomy with $P = 0.$
So $\lambda_{PL} (a)+ \epsilon \in \mathbb{R}\setminus\Sigma(a) $, and then  $\lambda_{PD} (a)\leq \lambda_{PL}(a) + \epsilon$.  Since $\epsilon >0$ is arbitrary,  we conclude that $\lambda_{PD} (a)\leq \lambda_{PL}(a).$
Hence $\lambda_{PL}(a)=\lambda_{PD}(a)$.
\end{proof}

\section{Proof of Theorem \ref{relation-thm2}}

In this section, we discuss the relations between $\lambda_{PE}(a)$, $\lambda_{PE}^{'}(a)$
and $\lambda_{PL}(a)$ and prove  Theorem \ref{relation-thm2}.

We first present four lemmas.

\begin{lemma}
\label{mean-lm2}
For any $x\in D$ and  $\epsilon>0$, there is $A_{x,\epsilon}\in W^{1,\infty}(\RR)$  such   that
$$
a(t,x)+A_{x,\epsilon}^{'}(t)\ge \hat a(x)-\epsilon \quad \text{for} \;\; a.e.\,\, t\in\RR.
$$
\end{lemma}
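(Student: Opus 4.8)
The plan is to reduce from the almost periodic function $t\mapsto a(t,x)$ to a trigonometric polynomial approximation, and then use the fact that a trigonometric polynomial all of whose frequencies are nonzero has an \emph{explicit bounded} antiderivative. Fix $x\in D$ and $\epsilon>0$. By Proposition \ref{almost-periodic-prop}(2), $t\mapsto a(t,x)$ is almost periodic with mean value $\hat a(x)$. Applying Proposition \ref{almost-periodic-prop}(3) to $a(\cdot,x)$ and taking real parts (which cannot increase the sup-distance since $a$ is real-valued and $\hat a$ is real), we obtain a real trigonometric polynomial, which after collecting the zero-frequency terms we write as $P(t)=c_0+\sum_{k=1}^{m}\bigl(b_k\cos(\mu_k t)+c_k\sin(\mu_k t)\bigr)$ with all $\mu_k\neq 0$, such that $\sup_{t\in\RR}|a(t,x)-P(t)|<\tfrac{\epsilon}{2}$.

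Next I would note that the mean-value functional $f\mapsto\lim_{T\to\infty}\tfrac1T\int_0^T f$ is $1$-Lipschitz for the sup norm, and that the mean of $P$ is $c_0$; hence $|c_0-\hat a(x)|\le\sup_{t\in\RR}|P(t)-a(t,x)|<\tfrac{\epsilon}{2}$, so $c_0>\hat a(x)-\tfrac{\epsilon}{2}$. Then define $R(t)=\sum_{k=1}^{m}\bigl(\tfrac{b_k}{\mu_k}\sin(\mu_k t)-\tfrac{c_k}{\mu_k}\cos(\mu_k t)\bigr)$, so that $R$ is a real trigonometric polynomial with $R'(t)=\sum_{k=1}^{m}\bigl(b_k\cos(\mu_k t)+c_k\sin(\mu_k t)\bigr)=P(t)-c_0$. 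Both $R$ and $R'$ are bounded, so $A_{x,\epsilon}:=-R$ lies in $W^{1,\infty}(\RR)$. For every $t\in\RR$ one then has $a(t,x)+A_{x,\epsilon}'(t)=a(t,x)-\bigl(P(t)-c_0\bigr)=\bigl(a(t,x)-P(t)\bigr)+c_0>-\tfrac{\epsilon}{2}+\hat a(x)-\tfrac{\epsilon}{2}=\hat a(x)-\epsilon$, which gives the claim (in fact for all $t$, not merely a.e.).

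The point worth flagging — and the only mildly delicate step — is why one cannot just take $A_{x,\epsilon}(t)=-\int_0^t\bigl(a(s,x)-\hat a(x)\bigr)ds$, which would give equality with $\hat a(x)$: the antiderivative of a mean-zero almost periodic function need not be bounded (the Bohl–Bohr phenomenon), so this choice may fail to lie in $W^{1,\infty}(\RR)$. The $\epsilon$-slack in the statement is precisely what permits first replacing $a(\cdot,x)$ by a trigonometric polynomial, whose oscillatory part \emph{does} admit the explicit bounded primitive $R$ above; this is handled entirely by Proposition \ref{almost-periodic-prop}(3). The remaining verifications (real-part reduction, Lipschitz continuity of the mean, and boundedness of $R,R'$) are routine.
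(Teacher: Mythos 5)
Your proposal is correct. The paper's own ``proof'' of Lemma \ref{mean-lm2} is a one-line citation of Lemma~3.2 of Nadin--Rossi~\cite{NadRos}; you instead give a self-contained argument, and the route you take --- Bohr's approximation by trigonometric polynomials via Proposition~\ref{almost-periodic-prop}(3), splitting off the constant (zero-frequency) part $c_0$, observing that $c_0$ is $\epsilon/2$-close to $\hat a(x)$ because the mean functional is $1$-Lipschitz in the sup norm, and then taking the explicit bounded primitive $R$ of the oscillatory part with $\mu_k\neq 0$ --- is precisely the standard mechanism behind lemmas of this type, so it is morally the same proof unpacked rather than a genuinely different one. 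All the intermediate steps check out: taking real parts cannot increase $\|a(\cdot,x)-P_\epsilon\|_\infty$ since $a$ is real; $R$ and $R'=P-c_0$ are bounded trigonometric polynomials, so $A_{x,\epsilon}=-R\in W^{1,\infty}(\RR)$; and the final chain $a(t,x)+A_{x,\epsilon}'(t)=(a(t,x)-P(t))+c_0>\hat a(x)-\epsilon$ holds for every $t$, not merely a.e., as you note. Your closing remark about the Bohl--Bohr phenomenon correctly identifies why one cannot simply take $A_{x,\epsilon}(t)=-\int_0^t(a(s,x)-\hat a(x))\,ds$ (the primitive of a mean-zero almost periodic function need not be bounded) and why the $\epsilon$-slack in the statement is exactly what the polynomial approximation needs; this is a helpful clarification that the paper, by citing out, omits.
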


\begin{proof}
It follows from \cite[Lemma 3.2]{NadRos}.
\end{proof}

\begin{lemma}
\label{monotonicity-lem}
If $D_1\subset D_2$, then $\lambda_{PL}(D_1)\le \lambda_{PL}(D_2)$.
\end{lemma}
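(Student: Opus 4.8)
The plan is to use the comparison principle together with positivity preservation of the nonlocal flow to dominate the $D_1$--flow by the $D_2$--flow applied to the same positive initial datum, and then read off the inequality for the top Lyapunov exponents from Theorem \ref{relation-thm1}(1). Here $a$ is fixed on $\RR\times\bar D_2$ (hence on $\RR\times\bar D_1$), and since $\lambda_{PL}$ depends only on $t-s$ we may take $s=0$ throughout.

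First I would fix the constant initial datum $u_0\equiv 1$, which lies in both $X(D_1)$ and $X(D_2)$ (constants are uniformly continuous and bounded on any $D$, including $D=\RR^N$) and satisfies $\inf u_0>0$. Set $u(t,x)=(\Phi(t,0;a,D_1)u_0)(x)$ for $x\in\bar D_1$ and $v(t,x)=(\Phi(t,0;a,D_2)u_0)(x)$ for $x\in\bar D_2$. By Proposition \ref{comparison-prop}(1) (comparing with the zero solution) we have $u\ge 0$ on $\bar D_1$ and $v\ge 0$ on $\bar D_2$ for all $t\ge 0$.

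The key step is to show $u(t,x)\le v(t,x)$ for all $t\ge 0$ and $x\in\bar D_1$. Restricting $v$ to $\bar D_1$, for $x\in\bar D_1$ one has
\[
\partial_t v(t,x)=\int_{D_2}\kappa(y-x)v(t,y)\,dy+a(t,x)v(t,x)\ \ge\ \int_{D_1}\kappa(y-x)v(t,y)\,dy+a(t,x)v(t,x),
\]
because $D_1\subset D_2$, $\kappa\ge 0$, and $v\ge 0$. Thus $v|_{\bar D_1}$ is a bounded super-solution of \eqref{main-linear-eq} on $\bar D_1$, while $u$ is the solution with the same initial value $u(0,\cdot)=1=v(0,\cdot)|_{\bar D_1}$; Proposition \ref{comparison-prop}(1) then yields $u\le v$ on $\bar D_1$. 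Consequently, using $0\le u\le v$ on $\bar D_1$ and $D_1\subset D_2$,
\[
\|\Phi(t,0;a,D_1)u_0\|=\sup_{x\in\bar D_1}u(t,x)\ \le\ \sup_{x\in\bar D_1}v(t,x)\ \le\ \sup_{x\in\bar D_2}v(t,x)=\|\Phi(t,0;a,D_2)u_0\|
\]
for every $t\ge 0$. Taking logarithms, dividing by $t$, letting $t\to\infty$, and applying Theorem \ref{relation-thm1}(1) on each domain (valid since $\inf u_0=1>0$) gives $\lambda_{PL}(a,D_1)\le\lambda_{PL}(a,D_2)$, as claimed.

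The only delicate point is the middle step, namely verifying that the restriction of the $D_2$--solution to $\bar D_1$ is a genuine super-solution of the $D_1$--problem; this is exactly where the nonnegativity of both $\kappa$ and $v$ is used, and it is the only place where any argument beyond bookkeeping and the two cited results is needed.
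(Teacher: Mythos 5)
Your proof is correct and follows essentially the same route as the paper: fix $u_0\equiv 1$, dominate the $D_1$-flow by the restriction of the $D_2$-flow via the comparison principle, and then pass to the limit using Theorem \ref{relation-thm1}(1). The paper's version is terser and simply asserts the domination $\Phi(t,s;a,D_1)u_0\le\Phi(t,s;a,D_2)u_0|_{D_1}$ without spelling out the super-solution verification, which is exactly the detail you supply.
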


\begin{proof}
For $u_0(x)\equiv 1$ on $D_2$, we have
$$
\Phi(t,s;a,D_1)u_0|_{D_1}\le\Phi(t,s;a,D_2)u_0\quad {\rm on}\quad D_1,\,\,\, \forall\, t\ge s.
$$
This implies that
$$
\lambda_{PL}(a,D_1)=\lim_{t-s\to\infty}\frac{\ln |\Phi(t,s;a,D_1)u_0|_{D_1}\|}{t-s}\le\lim_{t-s\to\infty}\frac{\ln\|\Phi(t,s;a,D_1)u_0\|}{t-s}=\lambda_{PL}(a,D_2).
$$
\end{proof}

\begin{lemma}
\label{lower-bound-lem}
$\lambda_{PL}(a)\ge \underset{{x\in D}}{\sup}\;\hat a(x)$.
\end{lemma}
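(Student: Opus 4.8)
The plan is to construct, for each fixed point $x_0\in D$, an explicit non-negative sub-solution of \eqref{main-linear-eq} whose value at $x=x_0$ grows precisely at the rate given by the time average $\hat a(x_0)$, and then to compare it with the solution starting from $u_0\equiv 1$ and let $t\to\infty$.

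First I would note the key fact that, since $\kappa\ge 0$, the dispersal term $\int_D\kappa(y-x)v(t,y)\,dy$ is non-negative whenever $v\ge 0$. Consequently the function
$$
v(t,x):=e^{\int_0^t a(\tau,x)\,d\tau}
$$
is strictly positive, is continuous on $[0,\tau)\times\bar D$ (because $a$ is uniformly continuous by {\bf (H2)}), is bounded on every bounded time interval, and satisfies
$$
\partial_t v(t,x)=a(t,x)v(t,x)\le \int_D\kappa(y-x)v(t,y)\,dy+a(t,x)v(t,x);
$$
thus $v$ is a sub-solution of \eqref{main-linear-eq}. Let $u(t,x)=(\Phi(t,0;a)u_0)(x)$ with $u_0\equiv 1$; then $u$ is a super-solution (indeed a solution) of \eqref{main-linear-eq}, bounded on bounded time intervals, with $v(0,\cdot)=u_0=u(0,\cdot)$. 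Applying Proposition \ref{comparison-prop}(1) on $[0,T)$ for every $T>0$ gives $v(t,\cdot)\le u(t,\cdot)$ for all $t\ge 0$.

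It remains to read off the growth rate. For every $t\ge 0$,
$$
\|\Phi(t,0;a)u_0\|=\sup_{x\in\bar D}|u(t,x)|\ \ge\ u(t,x_0)\ \ge\ v(t,x_0)=e^{\int_0^t a(\tau,x_0)\,d\tau},
$$
so that $\tfrac1t\ln\|\Phi(t,0;a)u_0\|\ge \tfrac1t\int_0^t a(\tau,x_0)\,d\tau$. By Proposition \ref{almost-periodic-prop}(2) the right-hand side converges to $\hat a(x_0)$ as $t\to\infty$, and by Theorem \ref{relation-thm1}(1) (applicable since $\inf_{x\in D}u_0(x)=1>0$, and restricting the limit to $s=0$) the left-hand side converges to $\lambda_{PL}(a)$. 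Hence $\lambda_{PL}(a)\ge \hat a(x_0)$, and taking the supremum over $x_0\in D$ yields $\lambda_{PL}(a)\ge \sup_{x\in D}\hat a(x)$.

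I do not expect a real obstacle here; the only points that need a little attention are the boundedness hypothesis in Proposition \ref{comparison-prop}(1) (which is why the comparison is run on finite intervals $[0,T)$ and the conclusion then extended to all $t\ge 0$) and the identification of the time average $\tfrac1t\int_0^t a(\tau,x_0)\,d\tau\to\hat a(x_0)$ with the exponential growth rate of the solution. Alternatively, one may dispense with the explicit sub-solution: writing $g(t)=u(t,x_0)$, which is non-negative by the comparison principle, the equation gives $g'(t)\ge a(t,x_0)g(t)$, whence $\frac{d}{dt}\big(g(t)\,e^{-\int_0^t a(\tau,x_0)\,d\tau}\big)\ge 0$ and therefore $g(t)\ge e^{\int_0^t a(\tau,x_0)\,d\tau}$, which is the same estimate.
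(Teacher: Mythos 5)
Your proof is correct and takes a genuinely different, more direct route than the paper's. You build the explicit subsolution $v(t,x)=e^{\int_0^t a(\tau,x)\,d\tau}$ (equivalently, read off $u_t(t,x_0)\ge a(t,x_0)u(t,x_0)$ directly from nonnegativity of $u$ and $\kappa$ and integrate), compare with the solution started from $u_0\equiv 1$, and pass to the limit using the existence of the Lyapunov-exponent limit from Theorem \ref{relation-thm1}(1) together with the Ces\`aro convergence $\tfrac1t\int_0^t a(\tau,x_0)\,d\tau\to\hat a(x_0)$ from Proposition \ref{almost-periodic-prop}(2). The paper instead invokes Lemma \ref{mean-lm2} (the Nadin--Rossi bounded-primitive lemma) to find $A_0\in W^{1,\infty}(\RR)$ with $a(t,x_0)+A_0'(t)\ge \hat a(x_0)-\epsilon$, restricts to a small ball $D_1$ around $x_0$ where $a(t,x)\ge a(t,x_0)-\epsilon$, derives a pointwise exponential lower bound $e^{A_0(0)}e^{(\hat a(x_0)-2\epsilon)t}$ on $D_1$, and then lifts the estimate to $D$ via the domain monotonicity of $\lambda_{PL}$ (Lemma \ref{monotonicity-lem}). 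Your argument is shorter and dispenses with Lemmas \ref{mean-lm2} and \ref{monotonicity-lem} entirely; the paper's construction, in exchange, yields a lower bound that is exponential uniformly in $t$ (not only after taking $\tfrac1t\ln$ and letting $t\to\infty$) and is deliberately parallel to the proof of Theorem \ref{variation-thm1}(1) for $\lambda_{PE}(a)$, where no analogue of the limit theorem is available to extract a growth rate. Your care in running the comparison on each finite interval $[0,T)$ so as to respect the boundedness hypothesis of Proposition \ref{comparison-prop}(1) is exactly the right precaution, and there is no circularity: Theorem \ref{relation-thm1}(1) is established independently in Section 3.
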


\begin{proof}
Note that this lemma follows from $\lambda_{PL}(a)\ge \lambda_{PE}(a)$ (see Theorem \ref{relation-thm2}(2)) and $\lambda_{PE}(a)\ge \sup_{x \in D}\hat a(x)$ (see Theorem \ref{variation-thm1}(1)),
whose proofs are independent of each other and do not require the conclusion in this lemma. In the following,
we give a direct proof of this lemma.

For any $\epsilon>0$,
let $x_0\in D$ be such that
$$
\hat a(x_0)\ge \sup_{x\in D}\hat a(x)-\epsilon.
$$
By Lemma \ref{mean-lm2},  there are $\delta>0$  and $A_0\in W^{1,\infty} (\RR)$ such that
\begin{equation}
\label{eigenvalue-eq1-0}
a(t,x_0)+A_0^{'}(t)\ge \hat a(x_0)-\epsilon\quad \text{for}\;\; a.e. \,\,  t\in\RR
\end{equation}
and
\begin{equation}
\label{eigenvalue-eq1-00}
a(t,x)\ge  a(t,x_0)-\epsilon\quad \forall\, t\in\RR,\,\, x\in D_1,
\end{equation}
where
$$
D_1=D_1(x_0,\delta)=\{x\in D\,|\, |x-x_0|\le \delta\}.
$$

Let $u(t,x;D_1)$ be the solution of
$$
u_t=\int_{D_1}\kappa(y-x)u(t,y)dy+a(t,x)u,\quad x\in \bar D_1
$$
with $u(0,x;D_1)=1$. Let $v(t,x;D_1)=e^{A_0(t)}u(t,x;D_1)$. Then
$$
v_t=\int_{D_1}\kappa(y-x) v(t,y;D_1)dy+(a(t,x)+A_0^{'}(t))v(t,x;D_1)\quad \text{for} \;\; a.e.\, \, t\ge 0,\,\, \forall\, x\in\bar D_1.
$$
This together with Proposition \ref{comparison-prop}, \eqref{eigenvalue-eq1-0},  and \eqref{eigenvalue-eq1-00} implies that
$$
v(t,x;D_1)\ge e^{A_0(0)} e^{(\hat a(x_0)-2\epsilon)t}\quad  for \; a.e. \,\,  t\ge 0,\,\, \forall\, x\in D_1.
$$
Hence
$$
\lambda_{PL}(D_1)\ge \hat a(x_0)-2\epsilon\ge \sup_{x\in D}\hat a(x)-3\epsilon.
$$
By Lemma \ref{monotonicity-lem}, we have
$$
\lambda_{PL}(D)\ge \sup_{x\in D}\hat a(x)-3\epsilon
$$
for any $\epsilon>0$. Letting $\epsilon\to 0$, the lemma follows.
\end{proof}

Let $a(t,x), g(\cdot,\cdot)\in \mathcal{X}$ and $a(t,x)$ be almost periodic in $t$ uniformly with respect to $x\in\bar D$.
Consider
\begin{equation}
\label{nonhomogeneous-eq}
\frac{d\phi}{dt}=a(t,x)\phi(t)-\lambda \phi(t)+g(t,x),
\end{equation}
where $\lambda\in\RR$ is a constant and $x\in\bar D$.
\eqref{nonhomogeneous-eq} can be viewed as a family of ODEs with parameter $x\in\bar D$.

\begin{lemma}
\label{nonhomogeneous-lm1}
If $\lambda>\sup_{x\in D}\hat a(x)$, then for any $x\in \bar D$,
$$
\phi^*(t;x,g)=\int_{-\infty}^t e^{\int_s^t a(\tau,x)d\tau-\lambda(t-s)} g(s,x)ds
$$
is a unique bounded solution of \eqref{nonhomogeneous-eq} on $\RR$. Moreover,
$\phi^*(t;x,g)$ is uniformly continuous in $(t,x)\in\mathbb{R} \times \bar D$. If
$\inf_{t\in\mathbb{R},x\in\bar D}g(t,x)>0$, then
$\inf_{t\in\mathbb{R},x\in\bar D}\phi^*(t;x,g)>0$.
\end{lemma}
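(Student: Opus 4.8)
The plan is to treat \eqref{nonhomogeneous-eq} as a family, indexed by $x\in\bar D$, of scalar linear ODEs whose propagator is $\Psi(t,s;x):=e^{\int_s^t a(\tau,x)\,d\tau-\lambda(t-s)}$, and to extract from the hypothesis $\lambda>\sup_{x\in D}\hat a(x)$ a \emph{uniform} exponential estimate
$$
\Psi(t,s;x)\le C\,e^{-\beta(t-s)}\qquad\text{for all }s\le t,\ x\in\bar D,
$$
with $\beta,C>0$ independent of $x$. To obtain it, fix $\epsilon>0$ with $\sup_{x\in D}\hat a(x)+2\epsilon<\lambda$. Applying Lemma~\ref{mean-lm2} to $-a(\cdot,x)$ (which is almost periodic in $t$ uniformly in $x$ with mean value $-\hat a(x)$) yields, for each fixed $x$, a function $B_{x,\epsilon}\in W^{1,\infty}(\RR)$ with $a(t,x)\le \hat a(x)+\epsilon+B_{x,\epsilon}'(t)$ a.e., hence $\int_s^t a(\tau,x)\,d\tau\le(\hat a(x)+\epsilon)(t-s)+2\|B_{x,\epsilon}\|_{\infty}$, so the coefficient of $(t-s)$ in the exponent of $\Psi(t,s;x)$ is $\le -\epsilon$. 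To make the additive constant independent of $x$ one invokes that $\frac1r\int_\tau^{\tau+r}a(\sigma,x)\,d\sigma\to\hat a(x)$ as $r\to\infty$, \emph{uniformly in $\tau\in\RR$ and $x\in\bar D$} — a standard consequence of {\bf (H2)} and Proposition~\ref{almost-periodic-prop} — which gives $\int_s^t a(\sigma,x)\,d\sigma\le(\hat a(x)+\epsilon)(t-s)+C_0$ for all $s\le t$ and $x\in\bar D$ with $C_0$ uniform; combined with $\hat a(x)\le\lambda-2\epsilon$ this yields the displayed estimate with $\beta=\epsilon$. I expect this uniform-in-$(x,\tau)$ mean-value control to be the one genuinely technical point; everything else is comparatively routine.

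Granting the estimate, $\phi^*(t;x,g)=\int_{-\infty}^t\Psi(t,s;x)g(s,x)\,ds$ converges absolutely with $\|\phi^*\|_\infty\le C\|g\|/\beta$. That $\phi^*$ solves \eqref{nonhomogeneous-eq} is the variation-of-parameters identity: since $\partial_t\Psi(t,s;x)=(a(t,x)-\lambda)\Psi(t,s;x)$ and $\Psi(t,t;x)=1$, differentiating under the integral sign (legitimate by the uniform exponential decay) gives $\frac{d}{dt}\phi^*(t;x,g)=g(t,x)+(a(t,x)-\lambda)\phi^*(t;x,g)$. For uniqueness, if $\phi_1,\phi_2$ are bounded solutions on $\RR$, then $\psi:=\phi_1-\phi_2$ solves the homogeneous equation, so $\psi(t)=\Psi(t,s;x)\psi(s)$ for all $s\le t$; since $\Psi(t,s;x)\le Ce^{-\beta(t-s)}\to0$ as $s\to-\infty$ while $|\psi(s)|\le\|\phi_1\|_\infty+\|\phi_2\|_\infty$, we conclude $\psi\equiv0$.

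For the regularity claim I would show $\phi^*$ is uniformly Lipschitz in $t$ (uniformly in $x$) and uniformly continuous in $x$ (uniformly in $t$), then combine via a triangle inequality. The first is immediate from the ODE: $|\partial_t\phi^*|\le(\|a\|+|\lambda|)\|\phi^*\|_\infty+\|g\|$. For the second, write
$$
\phi^*(t;x_1,g)-\phi^*(t;x_2,g)=\int_{-\infty}^t\bigl(\Psi(t,s;x_1)-\Psi(t,s;x_2)\bigr)g(s,x_1)\,ds+\int_{-\infty}^t\Psi(t,s;x_2)\bigl(g(s,x_1)-g(s,x_2)\bigr)\,ds .
$$
The second integral is $\le(C/\beta)\sup_{s\in\RR}|g(s,x_1)-g(s,x_2)|$, controlled by the modulus of uniform continuity of $g$; for the first, use $|e^A-e^B|\le e^{\max(A,B)}|A-B|$ with $A=\int_s^t(a(\tau,x_1)-\lambda)\,d\tau$, $B=\int_s^t(a(\tau,x_2)-\lambda)\,d\tau$, so that $|A-B|\le(t-s)\sup_\tau|a(\tau,x_1)-a(\tau,x_2)|$ and $e^{\max(A,B)}\le Ce^{-\beta(t-s)}$; hence that integral is $\le C\|g\|\bigl(\int_0^\infty re^{-\beta r}\,dr\bigr)\sup_\tau|a(\tau,x_1)-a(\tau,x_2)|$, again small for $|x_1-x_2|$ small by the uniform continuity of $a$.

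Finally, for positivity observe that since $a(\tau,x)\ge a_{\min}:=\inf_{t,x}a(t,x)$ and $\lambda>\sup_{x\in D}\hat a(x)\ge a_{\min}$, we have $\Psi(t,s;x)\ge e^{(a_{\min}-\lambda)(t-s)}$ for $t\ge s$; therefore if $g_0:=\inf_{t\in\RR,x\in\bar D}g(t,x)>0$ then $\phi^*(t;x,g)\ge g_0\int_{-\infty}^t e^{(a_{\min}-\lambda)(t-s)}\,ds=\dfrac{g_0}{\lambda-a_{\min}}>0$ uniformly in $(t,x)$, which completes the proof.
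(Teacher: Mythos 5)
Your proof is correct and reaches the same conclusions, but the route differs from the paper's in two places worth noting.

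First, on the uniform exponential estimate for the propagator $\Psi(t,s;x)=e^{\int_s^t a(\tau,x)\,d\tau-\lambda(t-s)}$: the paper simply asserts the existence of $\delta>0$ with $\Psi(t,s;x)\le e^{-\delta(t-s)}$ for all $t>s$ and $x\in\bar D$, which as stated is not quite right — for small $t-s$ the exponent behaves like $(a(s,x)-\lambda)(t-s)$, and $a(s,x)$ may exceed $\lambda$. Your version $\Psi(t,s;x)\le Ce^{-\beta(t-s)}$, obtained from the uniform (in $\tau$ and $x$) convergence of the averages $\frac1r\int_\tau^{\tau+r}a(\sigma,x)\,d\sigma\to\hat a(x)$, is the correct form and is just as serviceable for all subsequent estimates; your explicit identification of that uniform-mean-value fact as the genuine technical ingredient is accurate.

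Second, and more substantively, for the uniform continuity of $\phi^*$ in $x$ (uniformly in $t$) the paper argues by contradiction: it supposes the claim fails, extracts sequences $t_n$, $x_n$, $\tilde x_n$, passes to a limit ODE via almost periodicity and Arzel\`a--Ascoli, and invokes uniqueness of the bounded solution of that limit equation to reach a contradiction. You instead split $\phi^*(t;x_1,g)-\phi^*(t;x_2,g)$ into a term controlled by the modulus of continuity of $g$ (bounded by $C/\beta$ times $\sup_s|g(s,x_1)-g(s,x_2)|$) and a term controlled by that of $a$, the latter via $|e^A-e^B|\le e^{\max(A,B)}|A-B|$ together with $\int_0^\infty re^{-\beta r}\,dr<\infty$. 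This gives an explicit, quantitative modulus of continuity for $\phi^*$ in terms of those of $a$ and $g$, whereas the paper's compactness argument is purely qualitative and leans once more on the almost periodicity of $a$ to extract convergent subsequences. Both are valid; yours is more elementary and self-contained, and would work verbatim for any bounded uniformly continuous $a$ satisfying the hypothesis on averages, without invoking almost periodicity at that step. The uniqueness and positivity arguments are essentially the same as the paper's.
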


\begin{proof}
First, since $\lambda>\sup_{x\in D}\hat a(x)$, it is not difficult to prove that \eqref{nonhomogeneous-eq} has at most one bounded solution.
Note that there is $\delta>0$ such that
$$
e^{\int_s ^t a(\tau,x)d\tau -\lambda(t-s)}\le e^{-\delta(t-s)}\quad \forall\, t>s,\,\, x\in\bar D.
$$
This implies that $\phi^*(t;x,g)$ is uniformly bounded in $t\in\RR$ and $x\in\bar D$. Moreover,
 by direct computation, we have that $\phi^*(t;x,g)$ is a bounded  solution of \eqref{nonhomogeneous-eq} on $\RR$ and then
 $\frac{d\phi^*}{dt}(t;x,g)$ is uniformly bounded.
 Hence $\phi^*(t;x,g)$ is uniformly continuous in $t$ uniformly  with respect to $x\in\bar D$.

 Next, we claim that $\phi^*(t;x,g)$ is uniformly continuous in $x\in\bar D$ uniformly with respect to $t\in\RR$.
 In fact, if the claim is not true, then there are $\epsilon_0>0$, $x_n,\tilde x_n\in\bar D$, and $t_n\in\RR$ such that
 $$
 |x_n-\tilde x_n|\le \frac{1}{n}\quad \forall\, n\ge 1
 $$
 and
 \begin{equation}
 \label{assumption-eq3}
 |\phi^*(t_n;x_n,g)-\phi^*(t_n;\tilde x_n,g)|\ge \epsilon_0\quad \forall\, n\ge 1.
 \end{equation}
 Let
 $$
 \phi_n(t)=\phi^*(t+t_n;x_n,g),\quad \tilde\phi_n(t)=\phi^*(t+t_n;\tilde x_n,g).
 $$
 Then $\phi_n(t)$ and $\tilde\phi_n(t)$ satisfy
 $$
 \phi_n^{'}(t)=a(t+t_n,x_n)\phi_n(t)-\lambda \phi_n(t)+g(t+t_n,x_n)
 $$
 and
 $$
 \tilde\phi_n^{'}(t)=a(t+t_n,\tilde x_n)\phi_n(t)-\lambda \phi_n(t)+g(t+t_n,\tilde x_n),
 $$
 respectively. Without loss of generality, we may assume that there are $b(t)$, $h(t)$, $\phi(t)$, and $\tilde \phi(t)$ such that
 $$
 \lim_{n\to\infty} a(t+t_n,x_n)=\lim_{n\to\infty} a(t+t_n,\tilde x_n)=b(t),\quad \lim_{n\to\infty} g(t+t_n,x_n)=\lim_{n\to\infty} g(t+t_n,\tilde x_n)=h(t),
 $$
 and
 $$ \lim_{n\to\infty}\phi_n(t)=\phi(t),\quad
 \lim_{n\to\infty}\tilde\phi_n(t)=\tilde\phi(t)
 $$
 locally uniformly in $t\in\RR$. It then follows that both $\phi(t)$ and $\tilde\phi(t)$ are bounded solutions of the following
 ODE
 $$
 \psi^{'}=b(t)\psi-\lambda \psi+h(t).
 $$
 Since $\lambda>\sup_{t\in\RR} b(t)$, this ODE has a unique bounded solution. This implies that
 $$\phi(t)\equiv \tilde \phi(t).
 $$
 But   by \eqref{assumption-eq3},
 $$
 |\phi(0)-\tilde\phi(0)|\ge \epsilon_0,
 $$
 which is a contradiction. Therefore, the claim holds, whence $\phi^*(t;x,g)$ is uniformly continuous in
 $t\in\RR$ and $x\in\bar D$.

 We now claim that, if
$g_{\inf}:=\inf_{t\in\mathbb{R},x\in\bar D}g(t,x)>0$, then
$\inf_{t\in\mathbb{R},x\in\bar D}\phi^*(t;x,g)>0$.
 In fact,  let $a_{\inf}=\inf_{t\in\RR,x\in\bar D}a(t,x)$. For any $t\in\RR$ and $x\in\bar D$, we have
 \begin{align*}
\phi^*(t;x,g)&=\int_{-\infty}^t e^{\int_s^t a(\tau,x)d\tau-\lambda(t-s)} g(s,x)ds\\
&\ge \int_{-\infty}^t e^{(a_{\inf}-\lambda)(t-s)} g_{\inf} ds\\
&=\frac{g_{\inf}}{\lambda-a_{\inf}}.
\end{align*}
 The claim then follows and the lemma is thus proved.
\end{proof}

Now, we present the proof of Theorem \ref{relation-thm2}.

\begin{proof}[Proof of Theorem \ref{relation-thm2}(1)] The proof is given in two steps.

\smallskip

\noindent  {\bf Step 1.}  In this step, we prove that $\lambda_{PE}^{'}(a)\le\lambda_{PL}(a)$.

 Note that, for any $\lambda>\lambda_{PL}(a)$, there are $M,\delta >0$ such that
\begin{equation}\label{expon bound}
 e^{-\lambda(t-s)}\|\Phi(t,s;a)\|\le M e^{-\delta(t-s)}\quad \forall\, t\ge s.
\end{equation}
 For given $v\in\mathcal{X}$, consider
\begin{equation}
\label{aux-aux-eq1}
u_t=\int_{D}\kappa(y-x)u(t,y)dy+a(t,x)u-\lambda u +v.
\end{equation}
 Recall that
$$
\Phi_\lambda(t,s;a)=e^{-\lambda(t-s)}\Phi(t,s;a).
$$
Let
\begin{equation}
\label{u-eq00}
u(t,\cdot;a,v)=\int_{-\infty}^t \Phi_\lambda (t,s;a)v(s,\cdot)ds.
\end{equation}
By direct computation, we have that $u(t,x;a,v)$ is a solution of \eqref{aux-aux-eq1}.  By \eqref{expon bound},
 we have that $u(t,x;a,v)$ is bounded, and then by \eqref{aux-aux-eq1}, $u(t,x;a,v)$ is uniformly continuous in $t$ uniformly with respect to $x\in\bar D$.

  Let $g(t,x)=\int_D \kappa(y-x)u(t,y;a,v)dy+v(t,x)$. We have $g\in\mathcal{X}$.
  By Lemma \ref{lower-bound-lem}, $\lambda>\sup_{x\in D}\hat a(x)$.  Then  by Lemma \ref{nonhomogeneous-lm1},
  $u(t,x;a,v)=\phi^*(t;x,g)$ and then $u(\cdot,\cdot;a,v)\in\mathcal{X}$.
Choose $v(t,x)\equiv 1$. By Lemma \ref{nonhomogeneous-lm1} again, we have  $\inf_{t\in\RR,x\in\bar D}u(t,x;a,v)>0$. Note that
$$
-u_t+\int_D\kappa(y-x)u(t,y;a,v)dy+a(t,x)u(t,x;a,v)=\lambda u(t,x;a,v)-v\le \lambda u(t,x;a,v).
$$
Hence
 $\lambda\in\Lambda_{PE}^{'}(a)$. Therefore,
$$
\lambda_{PE}^{'}(a)\le \lambda\quad \forall \, \lambda>\lambda_{PL}(a).
$$
This implies that
$$
\lambda_{PE}^{'}(a)\le \lambda_{PL}(a).
$$

\noindent {\bf Step 2.}  In this step,  we prove that $\lambda_{PE}^{'}(a)\ge \lambda_{PL}(a)$.

 Note that for any $\lambda >\lambda_{PE}^{'}(a)$, there is $\phi\in \mathcal{X}$ with
 $\inf_{t\in\RR,x\in\bar D}\phi(t,x)>0$  such that
$$
-\phi_t(t,x) +\int_D \kappa(y-x)\phi(t,y)dy+a(t,x)\phi(t,x)\le \lambda \phi(t,x)\quad a.e.\,  t\in\RR,\,\,\forall\,  x\in \bar D.
$$
Let $u_0=\inf_{t\in\RR,x\in\bar D} \phi(t,x)$. By Proposition \ref{comparison-prop}, we have
$$
\Phi(t,0;a)u_0\le e^{\lambda t}\phi(t,x)\quad \forall \,\, t\ge 0,\, \, x\in\bar D.
$$
This implies that
$$
\lambda_{PL}(a)\le \liminf_{t\to\infty}\frac{\ln\|\Phi(t,0;a)u_0\|}{t}\le \lambda.
$$
Hence $\lambda_{PL}(a)\le \lambda_{PE}^{'}(a)$ and then  $\lambda_{PE}^{'}(a)= \lambda_{PL}(a)$.
\end{proof}

\begin{proof}[Proof of Theorem \ref{relation-thm2}(2)]
We prove Theorem \ref{relation-thm2}(2) in three steps.

\smallskip

\noindent {\bf Step 1.}
 In this first step, we prove that $\lambda_{PE}(a)\le \lambda_{PL}(a)$ for any domain $D$.

  Choose any
  $\lambda \in \Lambda_{PE}$. There is $\phi\in\mathcal{X}$ with $\inf_{t\in\RR}\phi(t,x) \ge\not\equiv 0$ and
   $\lambda \phi \leq L\phi$.
    Set $w(t,x) = e^{\lambda t}\phi(t,x).$ Then $w(t,x)$ is  a subsolution of \eqref{main-linear-eq} and $w(0,x) = \phi(0,x)$. By comparison principle,
 we have
 $$
 e^{\lambda t} \phi(t,\cdot) \le  \Phi(t,0;a)w(0,\cdot)\quad \forall \, t\ge 0.
 $$
 This implies that  $\lambda \leq \lambda_{PL}(a)$.  Hence
 \begin{equation}\label{lambdap}
      \lambda_{PE}(a) \leq \lambda_{PL}(a).
 \end{equation}

 \noindent {\bf Step 2.} In this step, we assume that $a(t,x)$ is $T$-periodic in $t$  and is also periodic in $x$ if $D=\RR^N$,
 and prove $\lambda_{PE}(a)=\lambda_{PL}(a)$.

By Proposition \ref{periodic-prop},  for any $\epsilon>0$, there are  $a_\epsilon(t,x), \phi_\epsilon(t,x)\in \mathcal{X}_p$ such that $\phi_\epsilon(t,x)>0$,
 $$
 \|a-a_\epsilon\|<\epsilon,
 $$
 and
 $$
 -\p_t \phi_\epsilon(t,x)+\int_D \kappa(y-x)\phi_\epsilon(t,y)dy+a_\epsilon(t,x)\phi_\epsilon(t,x)=\lambda_{PL}(a_\epsilon)\phi_\epsilon(t,x).
 $$
 This implies that
 $$
 \lambda_{PL}(a_\epsilon)-\|a-a_\epsilon\|\in\Lambda_{PE}(a).
 $$
 It then follows that
 $$
 \lambda_{PE}(a)\ge \lambda_{PL}(a_\epsilon)-\|a-a_\epsilon\|\ge \lambda_{PL}(a)-2\epsilon.
 $$
 Letting $\epsilon\to 0$, we get $\lambda_{PE}(a)\ge \lambda_{PL}(a)$, which together with \eqref{lambdap} implies that $\lambda_{PE}(a)=\lambda_{PL}(a)$.

 \smallskip

 \noindent{\bf Step 3.} In this step, we assume that $a(t,x)$ is limiting almost periodic and  prove that $\lambda_{PE}(a)=\lambda_{PL}(a)$.

 Since $a(t,x)$ is limiting almost periodic, there is a sequence  $\{a_n(t,x)\}$ of periodic functions such that
 $$
 \lim_{n\to\infty} a_n(t,x)=a(t,x)
 $$
 uniformly in $t\in\RR$ and $x\in\bar D$. Then by   Lemma \ref{continuity-lem1} and  the arguments in {\bf Step 2},
 $$
 \lambda_{PE}(a)=\lim_{n\to\infty}\lambda_{PE}(a_n)=\lim_{n\to\infty}\lambda_{PL}(a_n)=\lambda_{PL}(a).
 $$
 The proof of Theorem \ref{relation-thm2}(2) is thus completed.
\end{proof}

\begin{proof}[Proof of Theorem \ref{relation-thm2}(3)]
Assume that $a(t,x)\equiv a(t)$.

 First, we prove  that for any $D$,
 \begin{equation}
 \label{space-homogeneous-eq1}
 \lambda_{PL}(a)=\hat a+\lambda_{PL}(0).
 \end{equation}
Note that
$$
\Phi(t;a)=e^{\int_0^t a(s)ds}\Phi(t;0).
$$
This implies that \eqref{space-homogeneous-eq1} holds.

 Next, we prove that for any $D$,
 \begin{equation}
 \label{space-homogeneous-eq2}
 \lambda_{PE}(a)=\hat a+\lambda_{PE}(0).
 \end{equation}
 To this end, we first consider the case that ${\int_0^t a(s)ds}-\hat a t$
is a bounded function of $t$.
We claim that $\Lambda_{PE}(a)=\Lambda_{PE}(\hat a)$.
In fact, for any $\lambda\in\Lambda_{PE}(a)$, let $\phi\in\mathcal{X}$ be such that $\inf_{t\in\RR}\phi(t,x)\ge \not\equiv 0$ and
$$
-\phi_t+\int_D\kappa(y-x)\phi(t,y)dy+a(t)\phi(t,x)\ge \lambda \phi(t,x).
$$
Let $\psi(t,x)=e^{-(\int_0^t a(s)ds-\hat a t)}\phi(t,x)$. Then $\psi\in\mathcal{X}$, $\inf_{t\in\RR}\psi(t,x)\ge \not\equiv 0$, and
\begin{align*}
-\psi_t (t,x)&= (a(t)-\hat a) \psi(t,x)-e^{-(\int_0^t a(s)ds-\hat a t)}\phi_t(t,x)\\
&\ge (a(t)-\hat a) \psi(t,x) +e^{-(\int_0^t a(s)ds-\hat a t)} \Big(-\int_D\kappa(y-x)\phi(t,y)dy-a(t)\phi(t,x)+\lambda\phi(t,x)\Big)\\
&=-\hat a \psi(t,x)-\int_D \kappa(y-x)\psi(t,y)dy +\lambda\psi(t,x).
\end{align*}
This implies that $\lambda\in \Lambda_{PE}(\hat a)$.

Conversely, for any $\lambda \in\Lambda_{PE}(\hat a)$, there is $\phi\in\mathcal{X}$ with $\inf_{t\in\RR}\phi(t,x)\ge\not\equiv 0$ such that
$$
-\phi_t+\int_D\kappa(y-x)\phi(t,y)dy+\hat{a} \phi(t,x)\ge \lambda \phi(t,x).
$$
Let $\psi(t,x)=e^{-(\hat a t-\int_0^t a(s)ds)}\phi(t,x)$. Then $\psi\in\mathcal{X}$, $\inf_{t\in\RR}\psi(t,x)\ge\not\equiv 0$, and
$$
-\psi_t(t,x)\ge -a(t,x)\phi(t,x)-\int_D \kappa(y-x)\psi(t,y)dy+\lambda \psi(t,x).
$$
This implies that $\lambda\in\Lambda_{PE}(a)$.
Therefore, $\Lambda_{PE}(a)=\Lambda_{PE}(\hat a)$ and then $\lambda_{PE}(a)=\lambda_{PE}(\hat a)=\hat a+\lambda_{PE}(0)$.
\eqref{space-homogeneous-eq2} follows.

We now  consider the general case. Let $a(t)$ be any given almost periodic function. By Proposition \ref{almost-periodic-prop}(3),  we have that for any $\epsilon>0$, there is an almost periodic function $ a_\epsilon(t)$ such that
$\int_0^t a_\epsilon(s)ds-\hat a_\epsilon t$ is bounded and
$$
\|a(\cdot)-a_\epsilon(\cdot)\|\le \epsilon.
$$
By the above arguments, $\lambda_{PE}(a_\epsilon)=\hat a_\epsilon+\lambda_{PE}(0)$.
By Lemma \ref{continuity-lem1},
$$
\hat a+\lambda_{PE}(0)-2\epsilon \le \lambda_{PE}(a)\le \hat a+\lambda_{PE}(0)+2\epsilon
$$
Letting $\epsilon\to 0$,  \eqref{space-homogeneous-eq2} follows.

Now, by  similar arguments, we have that for any $D$,
\begin{equation}
\label{space-homogeneous-eq3}
\lambda_{PE}^{'}(a)=\hat a+\lambda_{PE}^{'}(0).
\end{equation}.

Finally,  by (1), (2), $\lambda_{PL}(0)=\lambda_{PE}(0)=\lambda_{PE}^{'}(0)$. This together with
\eqref{space-homogeneous-eq1}, \eqref{space-homogeneous-eq2}, and \eqref{space-homogeneous-eq3}
implies (3).
 \end{proof}

\section{Proof of Theorem \ref{variation-thm1}}

In this section, we discuss the effects of time and space variations on $\lambda_{PE}(a)$ and
prove Theorem \ref{variation-thm1}. We first present a lemma.

\begin{lemma}
\label{eigenvalue-lm1}
Consider \eqref{main-nonlinear-eq}. Suppose that
$f(t,x,u)=u(a(x)-b(x))$, $a,b\in X$,   and $\inf_{x\in D}b(x)>0$.
If   $\lambda_{PE}(a,D_0)>0$ for some bounded subset $D_0\subset D$, then \eqref{main-nonlinear-eq}
 has a  positive stationary solution $\phi^*(\cdot)\in X$.
\end{lemma}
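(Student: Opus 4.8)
The plan is to run the classical sub-/super-solution (monotone iteration) scheme for the autonomous equation \eqref{main-nonlinear-eq} with $f(t,x,u)=a(x)-b(x)u$: build a constant super-solution from the logistic structure, build a \emph{stationary} sub-solution from the positivity of $\lambda_{PE}(a,D_0)$, squeeze the solution between them, and identify the limit as a positive stationary solution lying in $X$. For the super-solution I would take $\bar u\equiv M$ with $M\ge(1+\|a\|)/\inf_{x\in D}b(x)$; since $\int_D\kappa(y-x)\,dy\le\int_{\RR^N}\kappa(y-x)\,dy=1$ by {\bf (H1)}, we get $\int_D\kappa(y-x)M\,dy+M(a(x)-b(x)M)\le M(1+\|a\|)-M^2\inf_{x\in D}b(x)\le0$, so $\bar u$ is a stationary super-solution.

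For the sub-solution, note that since $a(t,x)\equiv a(x)$ the restriction of $a$ to the bounded domain $D_0$ satisfies {\bf (H2$)'$} on $D_0$, so Theorem \ref{characterization-thm1}(1) applies there. The witnessing set $\tilde\Lambda_{PE}(a,D_0)$ is downward closed and unbounded below with supremum $\lambda_{PE}(a,D_0)>0$, so we may fix $\lambda_0:=\tfrac{1}{2}\lambda_{PE}(a,D_0)>0$ and $\phi_0\in X(D_0)$ with $\phi_0\ge\not\equiv 0$ and $\int_{D_0}\kappa(y-x)\phi_0(y)\,dy+a(x)\phi_0(x)\ge\lambda_0\phi_0(x)$ on $\bar D_0$. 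Extending $\phi_0$ by $0$ to $\bar D$ (mollifying in a thin collar of $\partial D_0$ if continuity requires it, which decreases $\lambda_0$ by an arbitrarily small amount) gives $\psi\in X$, $\psi\ge\not\equiv 0$, with $\int_D\kappa(y-x)\psi(y)\,dy+a(x)\psi(x)\ge\lambda_0\psi(x)$ on $\bar D$ (where $\psi(x)=0$ this is just $\int_D\kappa(y-x)\psi(y)\,dy\ge0$). Choosing $\delta>0$ so small that $\delta\|\psi\|\le M$ and $\delta b(x)\psi(x)\le\lambda_0$ on $\bar D$, the stationary function $\underline u:=\delta\psi$ satisfies $\int_D\kappa(y-x)\underline u(y)\,dy+\underline u(x)(a(x)-b(x)\underline u(x))\ge\delta\psi(x)\bigl(\lambda_0-\delta b(x)\psi(x)\bigr)\ge0$, so $\underline u$ is a stationary sub-solution with $\underline u\le\bar u$.

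With the ordered pair $\underline u\le\bar u$ in hand, the comparison principle for \eqref{main-nonlinear-eq} (which follows from Proposition \ref{comparison-prop} by the usual linearisation, $f$ being $C^1$ in $u$) gives $\underline u\le u(t,\cdot\,;0,\underline u)\le\bar u$ for $t\ge0$, and autonomy makes $t\mapsto u(t,x;0,\underline u)$ nondecreasing; hence it increases to a limit $\phi^*(x)$ with $\delta\psi\le\phi^*\le M$. Passing to the limit $s\to\infty$ in the Duhamel identity $u(t,x)=u(0,x)+\int_0^t\bigl[\int_D\kappa(y-x)u(s,y)\,dy+u(s,x)(a(x)-b(x)u(s,x))\bigr]ds$ --- the bracket is bounded, converges pointwise as $s\to\infty$ (dominated convergence in the nonlocal term), is $\ge0$, and has finite time-integral (namely $\phi^*(x)-u(0,x)$) --- forces $\int_D\kappa(y-x)\phi^*(y)\,dy+\phi^*(x)(a(x)-b(x)\phi^*(x))=0$ for every $x\in\bar D$, i.e. $\phi^*$ is a stationary solution. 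Then $\phi^*\in X$: the function $c(x):=\int_D\kappa(y-x)\phi^*(y)\,dy$ is bounded and uniformly continuous, and once $\phi^*>0$ the identity $b(x)\phi^*(x)^2-a(x)\phi^*(x)-c(x)=0$ yields $\phi^*(x)=(a(x)+\sqrt{a(x)^2+4b(x)c(x)})/(2b(x))$, uniformly continuous because $a,b,c$ are, $\inf_{x\in D}b(x)>0$, and $\sqrt{\cdot}$ is uniformly continuous on $[0,\infty)$. Finally, $\phi^*>0$ follows by propagation: writing $c_0:=\sup\{\,|a(x)-b(x)s|:x\in\bar D,\ 0\le s\le M\,\}$, the Duhamel inequality gives $e^{c_0t}u(t,x;0,\underline u)\ge e^{c_0s}u(s,x;0,\underline u)+\int_s^t e^{c_0\sigma}\int_D\kappa(y-x)u(\sigma,y;0,\underline u)\,dy\,d\sigma$ for $s\le t$; starting from the nonempty open set $\{\,x\in\bar D:\psi(x)>0\,\}$ and iterating (each step enlarges the positivity set by a fixed radius $r_0$ with $\kappa>0$ on $B(0,r_0)$), connectedness of $D$ gives $u(t,\cdot\,;0,\underline u)>0$ on $\bar D$ for every $t>0$, whence $\phi^*\ge u(1,\cdot\,;0,\underline u)>0$.

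The step I expect to be the real obstacle is turning the hypothesis $\lambda_{PE}(a,D_0)>0$ into a genuine continuous, positive, stationary sub-solution on all of $D$: this requires the characterization of $\lambda_{PE}$ on the bounded piece $D_0$ (Theorem \ref{characterization-thm1}(1)) together with a careful handling of the zero-extension of the sub-eigenfunction across $\partial D_0$. Everything afterward is the standard monotone method, the only extra subtlety being that $\phi^*\in X$ relies on the explicit logistic (quadratic) form of the reaction term.
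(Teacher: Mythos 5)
Your route is genuinely different from the paper's. The paper exhausts $D$ by bounded domains $D_1\subset D_2\subset\cdots$, uses Theorem \ref{relation-thm2} and Lemma \ref{monotonicity-lem} to get $\lambda_{PE}(a,D_n)\ge \lambda_{PL}(a,D_0)>0$ for $n\gg1$, invokes \cite[Theorem E]{RaSh} to produce the unique positive stationary solution $\phi_n^*$ on each $D_n$, shows $\phi_n^*\le\phi_{n+1}^*$ on $D_n$ by comparison (restricting $\phi_{n+1}^*$, which is a supersolution on $D_n$ --- so no extension across a boundary is ever needed), and passes to the monotone limit. You instead build one global sub-/super-solution pair on $D$ and run the monotone method directly, which is more self-contained, and your super-solution, the monotone iteration, the Duhamel identification of the limit, the uniform continuity via the quadratic formula, and the pointwise positivity argument are all sound.

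The gap is exactly where you predicted it: the sub-solution. Theorem \ref{characterization-thm1}(1) on $D_0$ only delivers $\phi_0\ge\not\equiv 0$, so $\phi_0$ can take arbitrarily small positive values at interior points of $D_0$, including points arbitrarily close to $\partial D_0$. After you truncate with a cutoff $\chi_\eta$ supported in a $\delta$-collar of $\partial D_0$, the estimate becomes, for $x$ with $\chi_\eta(x)=1$ and $a(x)\le\lambda$,
\begin{equation*}
\int_D\kappa(y-x)\psi(y)\,dy-(\lambda-a(x))\psi(x)\ \ge\ (\lambda_0-\lambda)\,\phi_0(x)-\epsilon_\eta(x),\qquad \epsilon_\eta(x):=\int_{D_0}\kappa(y-x)\bigl(1-\chi_\eta(y)\bigr)\phi_0(y)\,dy,
\end{equation*}
and there is no $\lambda>0$ for which this is nonnegative uniformly in $x$, since $\phi_0(x)$ may be much smaller than $\epsilon_\eta(x)$ near the collar. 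So the assertion that mollification \emph{only decreases $\lambda_0$ by an arbitrarily small amount} is not justified as stated. The fix is to replace Theorem \ref{characterization-thm1}(1) by Proposition \ref{periodic-prop}(2)--(3): approximate $a|_{\bar D_0}$ by $a_\epsilon$ admitting a genuine principal eigenfunction $\phi_\epsilon>0$ on the compact set $\bar D_0$, so that $\phi_\epsilon$ is bounded below by a positive constant $c$ and satisfies $\int_{D_0}\kappa(y-x)\phi_\epsilon(y)\,dy+a(x)\phi_\epsilon(x)\ge(\lambda_{PE}(a,D_0)-2\epsilon)\phi_\epsilon(x)$; then the same truncation estimate gives $(\lambda_0-\lambda)c-\sup_x\epsilon_\eta(x)\ge 0$ once $\eta$ is small, and the cases $a(x)>\lambda$ or $\psi(x)=0$ hold trivially. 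With that modification your argument goes through. (One cosmetic point: the line $\phi^*\ge u(1,\cdot;0,\underline u)>0$ needs to be stated pointwise-in-$x$ with $t$ depending on $x$ when $D=\RR^N$, since for a fixed finite time the positivity set of $u(t,\cdot;0,\underline u)$ is bounded.)
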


\begin{proof} Let $D_1\subset D_2\subset \cdots \subset D_n\subset \cdots$ be a sequence of bounded domains
such that $D=\cup_{n=1}^\infty D_n$.   Then by Theorem \ref{relation-thm2} and Lemma \ref{monotonicity-lem},
$$
\lambda_{PE}(a,D_n)=\lambda_{PL}(a,D_n)\le \lambda_{PL}(a,D_{n+1})= \lambda_{PE}(a,D_{n+1})\quad \forall \, n\ge 1.
$$
By Proposition \ref{periodic-prop},
$$
\lambda_{PE}(a,D_n)=\lambda_{PL}(a,D_n)\ge \lambda_{PL}(a,D_0)>0\quad \forall\, n\gg 1.
$$
Then by \cite[Theorem E]{RaSh}, there is a unique positive stationary solution
$\phi^*_n(\cdot)\in X(D_n)$ of
$$
u_t=\int_{D_n}\kappa(y-x)u(t,y)dy+u(a(x)-b(x)u),\quad x\in \bar D_n
$$
for $n\gg 1$. By Proposition \ref{comparison-prop},
$$
\phi_n^*(x)\le \phi_{n+1}^*(x)\quad \forall\,\, x\in D_n,\,\, n\gg 1.
$$
Therefore, the limit $\phi^*(x)=\lim_{n\to\infty}\phi_n^*(x)$ exists for all $x\in \bar D$.  Moreover, it is not difficult to see that $u=\phi^*(x)$ is a positive stationary solution of \eqref{main-nonlinear-eq}.
\end{proof}

We now prove Theorem \ref{variation-thm1}.

 \begin{proof}[Proof of Theorem \ref{variation-thm1}]
(1). We first prove that for any $a$ satisfying {\bf (H2)}, $\lambda_{PE}(a) \ge \sup_{x\in D} \hat{a}(x).$
For any $\epsilon>0$,
let $x_0\in D$ be such that
$$
\hat a(x_0)\ge \sup_{x\in D}\hat a(x)-\epsilon.
$$
By Lemma \ref{mean-lm2},  there are $\delta>0$  and $A_0\in W^{1,\infty} (\RR)$ such that
\begin{equation}
\label{eigenvalue-eq1}
a(t,x_0)+A_0^{'}(t)\ge \hat a(x_0)-\epsilon\quad {\rm for}\,\, \,\, a.e. \, \,  t\in\RR
\end{equation}
and
\begin{equation}
\label{pl-pe-eq3}
a(t,x)\ge  a(t,x_0)-\epsilon\quad \forall\, t\in\RR,\,\, x\in D_1(x_0,\delta),
\end{equation}
where
$$
D_1(x_0,\delta)=\{x\in D\,|\, |x-x_0|\le \delta\}.
$$

By \eqref{eigenvalue-eq1}, there is  $\tilde a(\cdot)\in X$  such that
$$
\tilde a(x)\begin{cases} =\hat a(x_0)-\epsilon\quad x\in D_1(x_0,\delta/2)\cr
\le a(t,x)+A_0^{'}(t)\quad {\rm for}\,\, a.e.\,\,  t\in\RR,\,\,\forall\,  x\in D.
\end{cases}
$$
For any $\lambda<\hat a(x_0)-\epsilon$, consider
\begin{equation}
\label{eigenvalue-eq2}
\tilde u_t=\int_{D}\kappa(y-x)\tilde u(t,y)dy+\tilde u(t,x)(\tilde a(x)-\lambda-A_0^{'}(t)-e^{A_0(t)}\tilde u),\quad x\in D.
\end{equation}
Let $\tilde v(t,x)=e^{A_0(t)}\tilde u(t,x)$. Then $\tilde v(t,x)$ satisfies
\begin{equation}
\label{eigenvalue-eq3}
\tilde v_t=\int_{D_1}\kappa(y-x)\tilde v(t,y)dy+\tilde v(t,x)(\tilde a(x)-\lambda-\tilde v),\quad{\rm for}\,\, a.e.\, t\in \RR,\, \, \forall\,  x\in D.
\end{equation}
By Lemma \ref{eigenvalue-lm1}, there is $\tilde v^*\in X$ with $\tilde v^*(x)>0$ such that
$$
 \int_{D}\kappa(y-x)\tilde v^*(y)dy+\tilde v^*(x)(\tilde a(x)-\lambda-\tilde v^*(x)) = 0\quad  \forall\, x\in D.
$$
Let $\tilde u^*(t,x)=\tilde v^*(x)e^{-A_0(t)}$. We have
$$
-\tilde u^*_t+\int_{D}\kappa(y-x)\tilde u^*(t,y)dy+(\tilde a(x)-A_0^{'})\tilde u^*(t,x)\ge \lambda  \tilde u^*(t,x)
$$
for a.e. $t\in\RR$ and all $x\in D$.
This implies that
$$
-\tilde u^*_t+\int_{D}\kappa(y-x)\tilde u^*(t,y)dy+a(t,x)\tilde u^*(t,x)\ge \lambda \tilde u^*(t,x)
$$
for a.e. $t\in\RR$ and all $x\in D$. Hence $\lambda\in \Lambda_{PE}(a)$, and
$$
\lambda_{PE}(a)\ge \sup_{x\in D}\hat a(x)-2\epsilon.
$$
Letting $\epsilon\to 0$, we obtain that $\lambda_{PE}(a)\ge \sup_{x\in D}\hat a(x)$.

Next, we assume that $a(t,x)$ is limiting almost periodic and show that $\lambda_{PE}(a) \ge \lambda_{PE}(\hat{a}) \ge \sup_{x \in D}\hat{a}(x)$.
Let $a_n(t,x)$ be a sequence of periodic functions such that $\underset{n\to \infty}{\lim}a_n(t,x) = a(t,x)$ uniformly in $t \in \RR$ and $x \in \bar{D}$.
By Theorem \ref{relation-thm2}(2) and Proposition \ref{periodic-prop}(1), (3),
we have
$$ \lambda_{PE}(a_n) \ge \lambda_{PE}(\hat{a}_n) \ge \sup_{x \in D}\hat{a}_n(x).$$
Letting $n\to \infty$, by Lemma \ref{continuity-lem1}, we obtain
$$
\lambda_{PE}(a) \ge \lambda_{PE}(\hat{a})  \ge \sup_{x \in D}(\hat{a}(x).
$$
(1) is thus proved.

\smallskip

(2) Write the eigenvalue problem
$$
\int_D\kappa(y-x)\phi(y)dy+a(x)\phi(x)=\lambda\phi(x)\quad\forall\,  x\in\bar D
$$
as
$$
\int_D\kappa(y-x)[\phi(y)-\phi(x)]dy+ [a(x)+\int_D \kappa(y-x)dy]\phi(x)=\lambda \phi(x)\quad \forall \, x\in \bar D.
$$
Then by the arguments of \cite[Theorem 2.1(4)]{ShXi},
$$
\lambda_{PE}(a)\ge \bar a+\frac{1}{|D|}\int_D\int_D \kappa(y-x)dydx.
$$

(3) Let $R_n\to \infty$ and $B(0,R_n)=\{x\in\RR^N\,|\, \|x\|\le R_n\}$. Then by Theorem \ref{relation-thm2} and Lemma \ref{monotonicity-lem},
 $$
\lambda_{PE}(a, B(0,R_n))=\lambda_{PL}(a,B(0,R_n))\le \lambda_{PL}(a,B(0,R_{n+1}))=\lambda_{PE}(a,B(0,R_{n+1}))\quad \forall\, n\ge 1.
$$
Put
$$
\lambda_{\infty}(a,D)=\lim_{n\to\infty} \lambda_{PE}(a,B(0,R_n))>0.
$$
Then for any $\lambda<\lambda_\infty(a,D)$, 
$$
\lambda(a,B(0,R_n))-\lambda>0\quad \forall\, n\gg 1.
$$
By  Lemma \ref{eigenvalue-lm1}, there is $\phi\in X^+\setminus\{0\}$ such that
$$
\int_D \kappa(y-x)\phi(y)dy+a(x)\phi(x)=\lambda \phi(x)+\phi^2(x)\ge \lambda \phi(x)\quad \forall\, x\in D.
$$
This implies that
$$
\lambda_{PE}(a,D)\ge \lambda \quad \forall\, \lambda <\lambda_{\infty}(a,D)
$$
and hence
\begin{equation}
\label{new-eqq2}
\lambda_{PE}(a,D)\ge \lambda_{PE}(a,B(0,R_n))\quad \forall\, n\ge 1.
\end{equation}

By (2), we have
$$
\lambda_{PE}(a,B(0,R_n))\ge \frac{1}{|B(0,R_n)|}\int_{B(0,R_n)}a(x)dx+\frac{1}{|B(0,R_n)|} \int_{B(0,R_n)}\int_{B(0,R_n)}\kappa(y-x)dydx.
$$
By {\bf (H1)}, for any $\epsilon>0$, there is $r>0$ such that
$$
\int_{\RR^N\setminus B(0,r)}\kappa(z)dz<\epsilon.
$$
This implies that
\begin{align*}
\int_{B(0,R_n)}\int_{B(0,R_n)}\kappa(y-x)dydx &\ge \int_{B(0,R_n-r)}\int_{B(0,R_n)}\kappa (y-x)dydx\\
&\ge \int_{B(0,R_n-r)}[\int_{\RR^N}\kappa(y-x)dy-\epsilon]dx\\
&=\int_{B(0,R_n-r)}(1-\epsilon)dx=|B(0,R_n-r)| (1-\epsilon).
\end{align*}
Note that
$$
\frac{|B(0,R_n-r)|}{|B(0,R_n)|}=\frac{(R_n-r)^N}{R_n^N}\to 1\quad {\rm as}\quad n\to\infty.
$$
It then follows that
$$
\lambda_{PE}(a)\ge \hat a+1-\epsilon\quad \forall\, \epsilon>0.
$$
Let $\epsilon\to 0$, we have
$$
\lambda_{PE}(a)\ge \hat a+1.
$$
The theorem is thus proved.
\end{proof}

 \section{Proof of Theorem \ref{variation-thm2}}

 In this section, we discuss the effects of space and time variations on $\lambda_{PL}(a)$ and
 prove Theorem \ref{variation-thm2}.
 We first present a lemma.

\begin{lemma}
\label{lyapunov-exp-lm2}
For any given $T>0$ and compact subset $\Omega\subset\RR^N$, let $w(t,x)$ be a positive continuous function on $[0,T]\times \Omega$.
Let
$$
\theta(x,y)=\frac{1}{T}\int_0^T \frac{w(t,y)}{w(t,x)}dt.
$$
Then either $w(t,x)$ is independent of $x$ or there is $x^*\in\Omega$ such that
$$
\theta(x^*,y)\ge 1\quad \forall\, y\in\Omega
$$
with strictly inequality for some $y\in\Omega$.
\end{lemma}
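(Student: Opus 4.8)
The plan is to dominate $\theta(x,y)$ from below by the exponential of a difference of averaged logarithms, using the integral form of the arithmetic--geometric mean inequality, and then to localize at a point where that average is minimized. Since $w$ is continuous and strictly positive on the compact set $[0,T]\times\Omega$, it is bounded away from $0$ and $\infty$ there, so $\ln w(t,x)$ is bounded and continuous, and
$$
g(x):=\frac{1}{T}\int_0^T \ln w(t,x)\,dt
$$
is well defined and continuous on $\Omega$. The key inequality, recorded first, is Jensen's inequality for the strictly convex function $s\mapsto e^{s}$ applied to the normalized measure $\frac1T\,dt$ on $[0,T]$:
$$
\theta(x,y)=\frac{1}{T}\int_0^T \frac{w(t,y)}{w(t,x)}\,dt\ \ge\ \exp\!\Big(\frac{1}{T}\int_0^T \ln\frac{w(t,y)}{w(t,x)}\,dt\Big)=e^{\,g(y)-g(x)},
$$
with equality if and only if $t\mapsto w(t,y)/w(t,x)$ is constant on $[0,T]$ (the equality case gives ``constant a.e.'', which upgrades to ``constant'' by continuity of $w$).

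Next I would choose $x^{*}\in\Omega$ where the continuous function $g$ attains its minimum over the compact set $\Omega$. Then $g(y)-g(x^{*})\ge 0$ for every $y\in\Omega$, so the displayed bound yields $\theta(x^{*},y)\ge 1$ for all $y\in\Omega$, which is the first half of the conclusion. To get strict inequality somewhere, assuming $w$ is \emph{not} independent of $x$, I split into two cases. If $g$ is non-constant on $\Omega$, pick $y_0$ with $g(y_0)>g(x^{*})$; then $\theta(x^{*},y_0)\ge e^{\,g(y_0)-g(x^{*})}>1$ and we are done. If $g$ is constant on $\Omega$, the displayed bound reads $\theta(x,y)\ge 1$ for \emph{every} pair $(x,y)$, so in this case any point of $\Omega$ already satisfies the ``$\ge 1$'' requirement; I then argue by contradiction. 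If $\theta(x,y)=1$ for all $x,y\in\Omega$, the equality case forces $w(t,y)/w(t,x)$ to be a $t$-independent constant $c(x,y)>0$, and averaging logarithms gives $\ln c(x,y)=g(y)-g(x)=0$, whence $w(t,y)\equiv w(t,x)$ for all $x,y,t$ — contradicting that $w$ depends on $x$. Hence some pair $(x_1,y_1)$ has $\theta(x_1,y_1)>1$, and since $\theta(x_1,\cdot)\ge 1$ on all of $\Omega$, the point $x^{*}:=x_1$ witnesses both halves of the statement.

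I expect no serious obstacle: the only points needing care are the equality clause in Jensen's inequality (and the continuity argument turning ``a.e.\ constant'' into ``constant''), and the bookkeeping in the constant-$g$ branch to ensure a single $x^{*}$ simultaneously gives ``$\theta(x^{*},\cdot)\ge 1$ everywhere'' and ``$\theta(x^{*},y)>1$ for some $y$''. Everything else is routine.
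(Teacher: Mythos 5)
Your proof is correct. The paper itself does not prove the lemma but simply cites \cite[Lemma 4.3]{HuShVi}, so there is no in-text argument to compare against; your Jensen-based proof serves as a clean, self-contained substitute. The key points are all handled carefully: Jensen's inequality for the strictly convex function $s\mapsto e^s$ against the probability measure $\frac{1}{T}\,dt$ gives $\theta(x,y)\ge e^{g(y)-g(x)}$ with equality exactly when $t\mapsto w(t,y)/w(t,x)$ is a.e. constant (hence constant, by continuity); choosing $x^*$ as a minimizer of the continuous function $g$ on the compact set $\Omega$ gives $\theta(x^*,\cdot)\ge 1$; and the two-case analysis (``$g$ non-constant'' yielding strictness immediately via $y_0$ with $g(y_0)>g(x^*)$, versus ``$g$ constant'' yielding strictness by a contradiction that forces $w$ to be $x$-independent) correctly covers all possibilities. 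One small stylistic remark: in the $g$-constant branch you could just as well keep $x^*$ as any point of $\Omega$ and then produce $y_1$ with $\theta(x^*,y_1)>1$ by the same contradiction argument, rather than switching to $x_1$ from a pair $(x_1,y_1)$, but either way the logic closes correctly. No gaps.
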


\begin{proof}
It follows from \cite[Lemma 4.3]{HuShVi}.
\end{proof}

We now prove Theorem \ref{variation-thm2}.

\begin{proof}[Proof of Theorem \ref{variation-thm2}]
(1)  First we assume that $D$ is bounded.
Let $u_0^*(x)\equiv 1$.
Let $u(t,\cdot;u_0^*)=\Phi(t;a)u_0^*$ and
$$v(t,\cdot;u_0^*)=e^{-\lambda_{PL}(a)t}u(t,\cdot;u_0^*).
$$
Then
$$
\limsup_{t\to\infty}\frac{\ln \|v(t,\cdot;u_0^*)\|}{t}=0
$$
and $v(t,x;u_0^*)$ satisfies
$$
\lambda_{PL}(a)v=-v_t+\int_{D}\kappa(y-x)v(t,y)dy+a(t,x)v(t,x)\quad \forall\, t\ge 0,\,\, x\in D.
$$
Hence
\begin{equation}
\label{lyapunov-exp-eq0}
\lambda_{PL}(a)=-\frac{v_t(t,x;u_0^*)}{v(t,x;u_0^*)}+\int_{D}\kappa(y-x)\frac{v(t,y;u_0^*)}{v(t,x;u_0^*)}dy+a(t,x)\quad \forall\, t\ge 0,\,\, x\in D.
\end{equation}

For any $\epsilon>0$, by Proposition \ref{periodic-prop},  there are $a^*\in X$ and $\phi^*\in X$ with $\phi^*(x)>0$ such that
\begin{equation}
\label{lyapunov-exp-eq1}
a^*(x)\le \hat a(x)\le a^*(x)+\epsilon,
\end{equation}
\begin{equation}
\label{lyapunov-exp-eq2}
\lambda_{PL}(\hat a)-\epsilon\le \lambda_{PL}(a^*)\le \lambda_{PL}(\hat a),
\end{equation}
and
\begin{equation}
\label{lyapunov-exp-eq3}
\lambda_{PL}(a^*)=\int_{D}\kappa(y-x)\frac{\phi^*(y)}{\phi^*(x)}dy+a^*(x)\quad \forall\, x\in D.
\end{equation}

By \eqref{lyapunov-exp-eq0} and \eqref{lyapunov-exp-eq3}, for any $T>0$, we have
\begin{align}
\label{lyapunov-exp-eq4}
&\lambda_{PL}(a^*)-\lambda_{PL}(a)\nonumber\\
&=\frac{1}{T}\int_0^ T \frac{v_t(t,x;u_0^*)}{v(t,x;u_0^*)}dt +\int_{D} \kappa(y-x)
\Big(\frac{\phi^*(y)}{\phi^*(x)}-\frac{1}{T}\int_0^T \frac{v(t,y;u_0^*)}{v(t,x;u_0^*)}dt\Big)dy\nonumber\\
&\,\,\, \, +a^*(x)-\frac{1}{T}\int_0^T a(t,x)dt\nonumber\\
&=\frac{1}{T}\big(\ln v(T,x;u^*_0)-\ln v(0,x;u_0^*)\big)+\int_{D}\kappa(y-x)\frac{\phi^*(y)}{\phi^*(x)}\Big(1-\frac{1}{T}\int_0^T \frac{w(t,y)}{w(t,x)}dt\Big)dy\nonumber\\
&\,\,\,\, +a^*(x)-\frac{1}{T}\int_0^T a(t,x)dt\quad \forall\, x\in D,
\end{align}
 where $w(t,x)=\frac{v(t,x;u_0^*)}{\phi_1^*(x)}$.

Choose $T>0$ such that
$$
\frac{1}{T}\int_0^T a(t,x)dt\ge \hat a(x)-\epsilon\quad \forall \, x\in D
$$
and
$$
\frac{1}{T}\big(\ln v(T,x;u_0^*)-\ln v(0,x;u_0^*)\big)= \frac{1}{T}\ln v(T,x;u_0^*)\le \frac{1}{T} \ln \|v(T,\cdot;u_0^*)\|\le \epsilon.
$$
Fix such $T$. By Lemma \ref{lyapunov-exp-lm2}, there is $x^*\in D$ such that
$$
1-\frac{1}{T}\int_0^T \frac{w(t,y)}{w(t,x^*)}dt\le 0\quad \forall\, y\in D.
$$
It then follows from \eqref{lyapunov-exp-eq2} and \eqref{lyapunov-exp-eq4} that
$$
\lambda_{PL}(\hat a)-\epsilon-\lambda_{PL}(a)\le \lambda_{PL}(a^*)-\lambda_{PL}(a)\le a^*(x)-\hat a(x)+2\epsilon\le
2\epsilon.
$$
Letting $\epsilon\to 0$, we obtain
\begin{equation}
\label{exp-estimate-eq2}
\lambda_{PL}(a)\ge \lambda_{PL}(\hat a).
\end{equation}

Next, suppose that $D$ is unbounded.
 By Theorem \ref{relation-thm2}(2) and Theorem \ref{variation-thm1}(1), we have
$$
\lambda_{PL}(a)=\lambda_{PE}(a)\ge \lambda_{PE}(\hat a)=\lambda_{PL}(\hat a).
$$
It then follows that
$$
\lambda_{PL}(a)\ge \lambda_{PL}(\hat a)\ge \sup_{x\in D}\hat a(x).
$$
where the last inequality follows from Lemma \ref{lower-bound-lem}.

(2) It follows from (1) and Theorem \ref{variation-thm1}(2).

(3) It follows from (1) and Theorem \ref{variation-thm1}(3).
\end{proof}

\section{Proof of Theorem \ref{characterization-thm1}}

In this section, we discuss the characterization of $\lambda_{PE}(a)$ and $\lambda_{PE}^{'}(a)$ and prove Theorem \ref{characterization-thm1}.

\begin{proof} [Proof of Theorem \ref{characterization-thm1}]
(1) Assume that $a(t,x)\equiv a(x)$. Let
$$
\tilde \lambda_{PE}(a)=\sup\{\lambda\,|\, \lambda\in \tilde \Lambda_{PE}(a)\}\quad {\rm and}\quad \tilde \lambda_{PE}^{'}(a)=\inf\{\lambda\,|\,
 \lambda\in\tilde \Lambda_{PE}^{'}(a)\}.
 $$

 First, by  the arguments of  Theorem \ref{relation-thm2}(1), we have
 \begin{equation}
 \label{aux-autonomous-eq1}
 \tilde\lambda_{PE}^{'}(a)=\lambda_{PL}(a).
 \end{equation}
To be  more precise,  first, when
$v(t,x)\equiv 1$ and $\lambda>\lambda_{PL}(a)$, it can be verified directly that the function $u(t,x;a,v)$ is independent of $t$, where
$u(t,x;a,v)$ is defined in \eqref{u-eq00}, that is,
$$
u(t,x;a,v)=\int_{-\infty}^ t \Phi_\lambda(t,s;a) v(s,\cdot)ds.
$$
 Then, $u(t,\cdot;a,v)\equiv u(\cdot;a,v)\in X$. By the arguments in  step 1 of the proof of Theorem \ref{relation-thm2}(1), $\lambda\in\tilde \Lambda_{PE}^{'}(a)$ and
$$
\tilde \lambda_{PE}^{'}(a)\le \lambda_{PL}(a).
$$
 Second,
it is clear that, by the arguments in step 2 of the proof of Theorem \ref{relation-thm2}(1), 
$$
\tilde\lambda_{PE}^{'}(a)\ge \lambda_{PL}(a).
$$
\eqref{aux-autonomous-eq1} thus follows.

Next, by the arguments of Theorem \ref{relation-thm2}(2), we have
\begin{equation}
\label{aux-autonomous-eq2}
\tilde\lambda_{PE}(a)=\lambda_{PL}(a).
\end{equation}
To be more precise, first, it is clear that, by the arguments in step 1 of the proof of   Theorem \ref{relation-thm2}(2),
$$
\tilde\lambda_{PE}(a)\le \lambda_{PL}(a).
$$
 Second, by the arguments in steps 2and  3 of the proof of   Theorem \ref{relation-thm2}(2), 
$$
\tilde\lambda_{PE}(a)\ge \lambda_{PL}(a).
$$
\eqref{aux-autonomous-eq2} then follows.

Now by \eqref{aux-autonomous-eq1}, \eqref{aux-autonomous-eq2}, and Theorem \ref{relation-thm2},
\begin{equation*}
\tilde\lambda_{PE}^{'}(a)=\tilde \lambda_{PE}(a)=\lambda_{PL}(a)=\lambda_{PE}(a)=\lambda_{PE}^{'}(a).
\end{equation*}
This implies (1).

 (2)  Assume that $a(t+T,x)\equiv a(t,x)$. Let
 $$
 \hat \lambda_{PE}(a)=\sup\{\lambda\,|\, \lambda\in \hat \Lambda_{PE}(a)\}\quad {\rm and}\quad \hat \lambda_{PE}^{'}(a)=\inf\{\lambda\,|\,
 \lambda\in\hat  \Lambda_{PE}^{'}(a)\}.
 $$
Similarly, by the arguments of Theorem \ref{relation-thm2}, we have
$$
\hat \lambda_{PE}^{'}(a)=\hat \lambda_{PE}(a)=\lambda_{PL}(a)=\lambda_{PE}(a)=\lambda_{PE}^{'}(a).
$$
(2) then follows.
\end{proof}

\end{document}